\begin{document}

\newtheorem*{theo}{Theorem}
\newtheorem*{pro} {Proposition}
\newtheorem*{cor} {Corollary}
\newtheorem*{lem} {Lemma}
\newtheorem{theorem}{Theorem}[section]
\newtheorem{corollary}[theorem]{Corollary}
\newtheorem{lemma}[theorem]{Lemma}
\newtheorem{proposition}[theorem]{Proposition}
\newtheorem{conjecture}[theorem]{Conjecture}

\theoremstyle{definition}
 \newtheorem{definition}[theorem]{Definition} 
  \newtheorem{example}[theorem]{Example}
   \newtheorem{remark}[theorem]{Remark}
   
\newcommand{\Naturali}{{\mathbb{N}}}
\newcommand{\Reali}{{\mathbb{R}}}
\newcommand{\Complessi}{{\mathbb{C}}}
\newcommand{\Toro}{{\mathbb{T}}}
\newcommand{\Relativi}{{\mathbb{Z}}}
\newcommand{\HH}{\mathfrak H}
\newcommand{\KK}{\mathfrak K}
\newcommand{\LL}{\mathfrak L}
\newcommand{\as}{\ast_{\sigma}}
\newcommand{\tn}{\vert\hspace{-.3mm}\vert\hspace{-.3mm}\vert}
\def\A{{\cal A}}
\def\B{{\cal B}}
\def\E{{\cal E}}
\def\F{{\cal F}}
\def\H{{\cal H}}
\def\K{{\cal K}}
\def\L{{\cal L}}
\def\N{{\cal N}}
\def\M{{\cal M}}
\def\gM{{\frak M}}
\def\O{{\cal O}}
\def\P{{\cal P}}
\def\S{{\cal S}}
\def\T{{\cal T}}
\def\U{{\cal U}}
\def\V{{\mathcal V}}
\def\qed{\hfill$\square$}

\title{Fourier series and twisted 
C*-crossed products}

\author{Erik B\'edos$^*$, Roberto Conti\\}
\date{\today}
\maketitle
\markboth{R. Conti, Erik B\'edos}{
}
\renewcommand{\sectionmark}[1]{}
\begin{abstract}
This paper is an invitation to Fourier analysis in the context of reduced twisted  C*-crossed products associated with discrete unital twisted C*-dynamical systems. We discuss norm-convergence of Fourier series, multipliers and summation processes. Our study relies in an essential way on the (covariant and equivariant) representation theory  of  C$^*$-dynamical systems on Hilbert C*-modules. It also yields some information on the ideal structure of  reduced twisted  C*-crossed products.   

\vskip 0.9cm
\noindent {\bf MSC 2010}: 46L55, 43A50, 43A55.

\smallskip
\noindent {\bf Keywords}: 
Fourier series, reduced twisted C$^*$-crossed product, decay properties, multipliers,
equivariant representations, Fej\'er summation, Abel-Poisson summation, invariant ideals.
\end{abstract}

\vfill
\thanks{\noindent $^*$  partially supported by the Norwegian Research
Council.\par

}

\newpage


\section{Introduction} 

Since its birth 
about two centuries ago, 
the  theory of Fourier series has been applied to a seemingly endless number 
of different situations and, accordingly, 
it has been the subject of intensive studies, 
especially in relation to various kinds of convergence 
and summation techniques. 
Among many others, 
the problem of determining conditions under which the Fourier series 
of a continuous periodic function on the real line is uniformly 
convergent has received a good deal of attention in the literature,  
and various kinds of summation processes  have also been constructed.

In the theory of operator algebras, started in the seminal work of D. Murray and J. von Neumann,
it is well known that one may associate to any group
several interesting examples of C$^*$-algebras and von Neumann algebras. 
In the context of twisted group C$^*$-algebras (and von Neumann algebras) associated with discrete groups,
the Fourier series of any element makes perfect sense. In the C$^*$-algebraic case,
the study of norm-convergence and summation processes is more involved than in the classical set-up,
but a surprisingly detailed analysis is possible, as exposed for instance in our previous article \cite{BeCo2}.
Now, given a twisted action $(\alpha, \sigma)$ of a discrete group $G$ on a unital C$^*$-algebra $A$
(the case we discuss in this paper),
one may also consider the Fourier series of any
element in the so-called (reduced) crossed product C$^*$-algebra $C_r^*(\Sigma)$, where 
$\Sigma$ denotes the quadruple $(A, G, \alpha, \sigma)$.  
However,  the Fourier coefficients lie now in $A$ (rather than in $\Complessi$), and consequently, the analysis becomes much more challenging.  
Our main aim with the present work is to investigate 
how one can handle this situation, having in mind the wild variety of different cases that may appear.
Due to the success of the theory of classical Fourier series,
we expect that 
once put on solid grounds 
the corresponding theory will become a useful tool 
in the study of C$^*$-dynamical systems.

We stress that the idea of doing Fourier analysis in reduced  C$^*$-crossed products is not new. 
It is already present in G.\ Zeller-Meier's impressive article \cite{ZM} from 1968, where he, among many other results, shows the existence of summation processes in the case of amenable groups, and uses this to obtain some valuable information about  the ideal structure. In the book by K.\ Davidson \cite{Da} one can find a proof of the direct analogue of the Fej\'er summation process in the special case of  
crossed products by an action of $\Relativi$ (see also \cite{Tom1}).  One may also consider $C^*_r(\Sigma)$ as the reduced cross sectional algebra of a  Fell bundle over the discrete group $G$ (see \cite{ExLa})
and notice that R. Exel  \cite{Ex} has shown how to associate Fourier series to elements in such algebras. In the same paper, Exel constructs summation processes in the case of Fell bundles with the so-called approximation property and illustrates their usefulness when studying induced ideals in the sectional algebra (see also \cite{Ex2}).    One could therefore think that one may as well work in the more general setting of Fell bundles. However, as alluded to in \cite{BeCo3}, our attitude has been that it should be possible to develop a more powerful analysis by exploiting the structure of discrete twisted 
C$^*$-crossed products and their representation theory on Hilbert  C$^*$-modules. 
We will do our best to justify this point of view and add some further evidence to the fact that the equivariant representations  of $\Sigma$ on Hilbert $A$-modules introduced in \cite{BeCo3} play a role complementary to the one played by covariant representations. Note that when $A$ is trivial, this splitting is not visible: covariant and equivariant representations coincide in this case and amount to unitary representations of $G$.
 
\smallskip
The starting point for our approach is as follows.  As is well known (see e.g. \cite{ZM, BeCo3}), $B=C^*_r(\Sigma)$ may be characterized (up to isomorphism) as a C$^*$-algebra $B$ that is generated by a copy of $A$ and  a family of unitaries
$\{u_g\}_{g \in G}$ satisfying the relations $u_g\,a = \alpha_g(a)\,u_g$ and $u_g\,u_h = \sigma(g,h)\,u_{gh}$, and is equipped with a faithful conditional expectation $E$ from $B$ onto $A$ satisfying $E(u_g) = 0$ when $g\neq e$ (the identity of $G$). The expectation $E$ may be thought as some kind of $A$-valued Haar integral: 
if $G$ is abelian and $\alpha, \sigma$ are both trivial, then $C^*_r(\Sigma)$ is isomorphic to $B= C(\widehat{G}, A)$, the continuous functions on the dual group $\widehat{G}$ with values in $A$,  and $E$ is indeed  given by the $A$-valued integral $E(f) = \int_{\widehat{G}} f(\gamma) d\gamma$ with respect to the normalized Haar measure on $\widehat{G}$.  The Fourier coefficients of $x \in C^*_r(\Sigma)$ are therefore usually defined by setting $\widehat{x}(g) = E(xu_g^*)$, so the Fourier transform $\widehat{x}$ becomes a function from $G$ to $A$. A useful fact that is not immediately apparent from this definition is that $\widehat{x}$ lies in the space   $$A^\Sigma= \Big\{\xi: G \to A \ | \ \sum_{g \in G}  \alpha_g^{-1}\big(\xi(g)^*\, \xi(g)\big) \ 
\mbox{is norm-convergent in  $A$} \Big\} \,. $$
Note that this statement contains a nontrivial information about the 
decay at infinity of general Fourier coefficients 
(Riemann-Lebesgue Lemma).
In the case of ordinary reduced 
crossed products this was recently observed in \cite[Lemma 5.2]{RoSi}. However, as was already remarked by C. Anantharaman-Delaroche in \cite{AD1}, the 
space $A^\Sigma$ has a natural Hilbert $A$-module structure on which $C_r^*(\Sigma)$ may  be faithfully represented by adjointable operators. The Fourier transform of $x \in C^*_r(\Sigma)$ is then simply  defined by $\widehat{x} = x\, \xi_0 \in A^\Sigma$, where $\xi_0(g) = \delta_{g,e}\, 1$, 
while $E$ is  given on $C_r^*(\Sigma)$ by $E(x) = \widehat{x}(e)$. The (formal) Fourier series of $x$ is now defined as 
$$\sum_{g \in G} \widehat{x}(g) \lambda_\Sigma(g)$$
where the $\lambda_\Sigma(g)$'s denote the canonical unitaries of $C_r^*(\Sigma)$ when it acts on $A^\Sigma$.

Following \cite{BeCo2}, the general problem about norm-convergence of Fourier series in $C_r^*(\Sigma)$ may be considered as the search for ``decay subspaces'' of $A^\Sigma$ that are as large as possible. For example, $\ell^1(G,A)$ is a decay subspace, corresponding to elements of $C_r^*(\Sigma)$ with absolutely convergent Fourier series.
Moreover, inspired by P. Jolissaint's notion of rapid decay (RD) for groups \cite{J}, any weight function $\kappa:G\to [1, \infty)$ such that $G$ is $\kappa$-decaying in the sense of \cite{BeCo2}
leads to a decay subspace $\ell^2_\kappa(G, A)$ of $A^\Sigma$. However, it appears that 
a larger subspace $A^\Sigma_\kappa$ is needed if one wishes to follow the strategy pioneered by U. Haagerup \cite{Haa1} and establish the existence of  certain summation processes, as we did in \cite{BeCo2} for many nonamenable groups.
A problem that appears naturally in 
our framework
is thus to find conditions ensuring that  
$\Sigma$ has the $A^\Sigma_\kappa$-decay property (expressing that $A^\Sigma_\kappa$ is a ``decay subspace''),  
again a kind of generalized version with coefficients of the RD-property. 
When $A$ is commutative and $\alpha$ is trivial, it suffices to assume that $G$ is $\kappa$-decaying (we prove this in the final section), but it should be possible to relax these assumptions. 
R. Ji and L. Schweitzer \cite{JiSc} have a result in this direction for nontrivial actions (and groups of polynomial growth), but it is not obvious to us that their proof can be adapted to give a more general result. 

We next focus on (reduced) multipliers of $\Sigma$. Our ultimate goal in this paper is to use such maps as smoothing kernels for
summation processes for Fourier series, as we did in \cite{BeCo2} when $A=\Complessi$.  Multipliers of C$^*$-dynamical systems are defined in analogy with multipliers on groups, but the terminology ``multiplier" might in fact be somewhat misleading. 
A multiplier $T$ of $\Sigma$ consists of a family $T= \{T_g\}_{g\in G}$ of linear maps from $A$ into itself such that there exists a bounded linear map $M_T$ from $C_r^*(\Sigma)$ into itself satisfying $$\widehat{M_T(x)}(g) =T_g\big(\widehat{x}(g)\big) $$ for all $x \in C_r^*(\Sigma)$ and $g\in G$. Such a multiplier is called a cb-multiplier when the map $M_T$ is completely bounded. Now, given a function $\varphi:G\to A$, one may wonder when it induces a ``left" multiplier and consider the family $T^\varphi= \{T^\varphi_g\}_{g\in G}$ of maps from $A$ to itself given by $T^\varphi_g(a) = \varphi(g)\, a$. We give a set of sufficient conditions ensuring that $T^\varphi$ is a cb-multiplier of $\Sigma$, and use this to show that every cb-multiplier of $G$ induces a cb-multiplier of $\Sigma$. 
We also show that a controlled growth of $\varphi$ w.r.t. a weight $\kappa$, in combination with the  $A^\Sigma_\kappa$-decay property, suffices for $T^\varphi$ to be a multiplier of $\Sigma$. 
In another direction, we give a conceptually satisfactory way of producing cb-multipliers of $\Sigma$, analogous to how matrix coefficients of unitary representations of $G$ induce cb-multipliers on $G$: the maps $T_g:A\to A$ are now of the form
$$T_g(a) = \big\langle x, \, \rho(a)\, v(g)\, y\big\rangle$$
for some equivariant representation $(\rho, v)$ of $\Sigma$ on some Hilbert $A$-module $X$ and $x, y \in X$. 
The collection of these cb-multipliers on $\Sigma$ may therefore be thought of as the analogue of the Fourier-Stieltjes algebra of $G$.
Our proof relies on a new version of Fell's absorption property, that loosely says that any equivariant representation of $\Sigma$ is absorbed when tensoring with some regular covariant representation. (We use here the notion of tensor product introduced in \cite{BeCo3}).  This version complements  the one proven in \cite{BeCo3} about absorption of covariant representations when tensoring with induced regular equivariant representation.

Having studied of multipliers, we turn our attention to summation processes. 
A Fourier summing net for $\Sigma$ is
a net $\{T^i\}$ of multipliers of $\Sigma$ such that,
for each $x \in C^*_r(\Sigma)$, the Fourier series of $M_{T^i}(x)$
$$\sum_{g \in G} T^i_g\big(\hat{x}(g)\big)\lambda_\Sigma(g)\, $$
is norm-convergent (necessarily to $M_{T^i}(x)$) for each $i$, and
$M_{T^i}(x)$ converges in norm to $ x $.
The existence of such Fourier summing nets is then discussed in a 
number of situations. 
In particular, we obtain a generalization of the 
classical Fej\'er summation theorem whenever $\Sigma$ has the weak approximation property of \cite{BeCo3}, and prove some analogs of the Abel-Poisson summation theorem. 
Almost all
the Fourier summing nets we are effectively able to construct have the property that they preserve the invariant ideals of $A$. The existence of such a net affects the ideal structure of $C^*_r(\Sigma)$: $\Sigma$ is then necessarily exact (in the sense of \cite{Si}), and the ideals of $C^*_r(\Sigma)$ that are $E$-invariant are precisely those that are induced from invariant ideals of $A$. Hence, in such a situation, the problem of determining the ideals of $C_r^*(\Sigma)$ reduces  to two different tasks: finding the invariant ideals of $A$ and investigating the possible existence of ideals of $C^*_r(\Sigma)$ that are not $E$-invariant. Especially, one can then deduce that all ideals of $C_r^*(\Sigma)$ are induced from invariant ideals of $A$ if one can show that any ideal of $C_r^*(\Sigma)$ is automatically $E$-invariant, thereby providing an alternative approach to the one obtained in \cite{Si}. We illustrate this with a simple example in the final section.

After having described what we have done in this paper,
we would like to add that we see it
as a first attempt to put 
some facts and ideas into a wider perspective. Many issues remain to be 
addressed, and it might be easier to deal with some of them in specific situations
(depending on various choices of the group, the algebra, the action and the cocycle) 
before attacking the general case. We hope that our work will stimulate
further research on this topic and the reader will find many open questions and 
problems scattered throughout the text. 

The paper is organized as follows.
In Section \ref{Preliminaries} we collect some notions and facts from \cite{BeCo3}.
As the present article is also heavily influenced by the line of thought
presented in \cite{BeCo2}, the reader is kindly advised to have a look at both these articles.
Section \ref{Convergence} is devoted to establishing a first set of
results about convergence of Fourier series.
 The concept of (reduced) multipliers is introduced and discussed in  Section \ref{Multipliers}.
Summation processes for 
Fourier series is the subject of Section \ref{Summation}.
In the last section (Section \ref{Trivial}) we deal with the ``almost trivial'' but still interesting case where $A$ is commutative and $\alpha$ is trivial, and show that in this situation 
the cocycle does not create any trouble for 
the analysis.

\newpage
\section{Preliminaries}\label{Preliminaries}

Throughout the paper, we will use the following conventions. To avoid some technicalities, we will only work in the category of {\it unital}  C$^*$-algebras, and a homomorphism 
between two objects in this category
 will always mean a unit preserving $*$-homomorphism.  Isomorphisms and automorphisms are consequently 
 also assumed to be $*$-preserving. The group of unitary elements in a C$^*$-algebra $A$ will be denoted by $\U(A)$, the center of $A$ by $Z(A)$, while the group of automorphisms of $A$ will be denoted by ${\rm Aut}(A)$. The identity map on $A$ will be denoted by ${\rm id}$ (or ${\rm id}_A$). By an ideal of $A$, we will always mean a two-sided closed ideal, unless otherwise specified. If $B$ is another C$^*$-algebra, $A\otimes B$ will denote their minimal tensor product.

\medskip By a Hilbert C$^*$-module, we will always mean a  {\it right} Hilbert C$^*$-module and follow the notation introduced in \cite{La1}. Especially, all inner products will be assumed to be linear in the second variable, $\L(X, Y)$  will denote the space of all adjointable operators between two Hilbert C$^*$-modules $X$ and $Y$ over a C$^*$-algebra $B$, and $\L(X) = \L(X,X)$. A representation of C$^*$-algebra $A$ on a Hilbert $B$-module $Y$ is then a homomorphism from $A$ into the C$^*$-algebra $\L(Y)$. If $Z$ is another Hilbert C$^*$-module (over $C$), we will let $\pi \otimes \iota : A  \to \L(Y \otimes Z)$ denote the amplified representation of $A$ on $Y \otimes Z$ given by $(\pi \otimes \iota)(a) = \pi(a) \otimes I_Z$, where the Hilbert $B\otimes C$-module $Y \otimes Z$ is the external tensor product of $Y$ and $Z$ and $I_Z$ denotes the identity operator on $Z$.  Note that if $Z$ is a Hilbert space, i.e. a Hilbert $\Complessi$-module, then we may and will regard $Y\otimes Z$ as a Hilbert $B$-module. 

\bigskip We will work with series in a C$^*$-algebra $A$ of the form $\sum_{i \in I} a_i$ where $I$ is a possibly uncountable set and $a_i \in A$ for each $i \in I$. Norm-convergence of such a series will always mean unconditional convergence (sometimes called summability). Since $A$ is Banach space, this happens if and only the usual Cauchy criterion is satisfied \cite{Di3}. An immediate consequence is the following fact, used without notice on several occasions in the sequel: if $\{a_i\}_{i \in I}, \,   \{b_i\}_{i \in I}$ are families of  elements in $A^{+}$ (the cone of positive elements in $A$) such that $a_i \leq b_i$ for each $i \in I$ and $\sum_{i \in I} b_i$ is norm-convergent to $b$ (lying necessarily in $A^{+}$), then $\sum_{i \in I} a_i$ is norm-convergent to some $a$ in $A^{+}$ satisfying $a \leq b$ (hence also $\|a\| \leq \|b\|)$.

\bigskip 
The quadruple $\Sigma = (A, G, \alpha,\sigma)$ will always denote a {\it twisted 
$($unital, discrete$)$ 
C$^*$-dynamical system}.  This means that
$A$ is a  C$^*$-algebra with unit $1$, 
$G$ is a discrete group with identity $e$
and $(\alpha,\sigma)$ is a {\it twisted
action} of $G$ on $A$ (sometimes called a cocycle $G$-action on $A$), that is,
$\alpha$ is a map from $G$ into ${\rm Aut}(A)$ 
and  $\sigma$ is a map from $G \times G$ into $\, \U(A)$,
satisfying
\begin{align*}
\alpha_g \, \alpha_h & = {\rm Ad}(\sigma(g, h)) \,  \alpha_{gh} \\
\sigma(g,h) \sigma(gh,k) & = \alpha_g(\sigma(h,k)) \sigma(g,hk) \\
\sigma(g,e) & = \sigma(e,g) = 1 \ , 
\end{align*}
for all $g,h,k \in G$. Of course,  ${\rm Ad}(v)$ denotes here the (inner) automorphism  of $A$ implemented by some unitary $v$ in $\U(A)$.

\medskip    
If $\sigma$ is  trivial, that is, $\sigma(g,h)=1$ for all $g,h \in G$, then $\Sigma$ is an ordinary C$^*$-dynamical system (see e.g.\ \cite{Wi, BrOz, Ec}), and one just writes $\Sigma=(A, G, \alpha)$.  If $\sigma$ is {\it central}, that is, takes values in  $\U(Z(A))$, then $\alpha$ is an ordinary  action of $G$ on $A$, and  this is the case studied in \cite{ZM}. If $A = {\mathbb C}$, then $\alpha_g={\rm{id}}$ for all $g \in G$ and $\sigma$ is a 2-cocycle on $G$ with values in the unit circle $\mathbb{T}$,
 (see e.g.\ \cite{BeCo2} and references therein).    

\medskip
To each twisted C$^*$-dynamical system  $\Sigma = (A, G, \alpha,\sigma)$ 
one may associate its {\it full twisted crossed product} C$^*$-algebra
$C^*(\Sigma)$ 
and its {\it reduced} version 
$C^*_r(\Sigma)$ (see \cite{PaRa, PaRa1}). In this paper we will be mostly interested in the reduced algebra. For the ease of the reader, we will recall some definitions and facts from \cite{BeCo3} needed in the sequel.

\medskip A {\it covariant homomorphism} of $\Sigma$ is a pair $(\pi,u)$,
where $\pi$ is a 
homomorphism of $A$ into a  
C$^*$-algebra $C$ and  $u$ is a map of $G$ into $\U(C)$, which satisfy
$$u(g)\, u(h) = \pi(\sigma(g,h))\,  u(gh)$$ and
the covariance relation 
\begin{equation}
\pi(\alpha_g(a)) = u(g) \, \pi(a) \, u(g)^* 
\end{equation}
 for all $g,h \in G$, $a \in A$. 
 Every such a pair induces a unique canonical homomorphim $\pi \times u$ from $C^*(\Sigma)$ onto the C$^*$-subalgebra of $C$ generated by $\pi(A)$ and $u(G)$.
 If $C=\L(X)$ for some Hilbert C$^*$-module $X$, then 
  $(\pi, u)$ is called a {\it covariant representation} of $\Sigma$ on $X$.

\medskip  Let $Y$ be a Hilbert $B$-module 
and assume $\pi$ is a 
representation of $A$ on $Y$.
We can then form  the  Hilbert $B$-module $Y^G$ ($\simeq Y\otimes \ell^2(G)$) given by
\begin{equation}
Y^G 
= \Big\{\xi: G \to Y \ | \ \sum_{g \in G} \big\langle\xi(g),\xi(g)\big\rangle \ 
\mbox{is norm-convergent in  $B$} \Big\} \, 
\end{equation}
endowed with the $B$-valued scalar product 
$$\big\langle\xi,\eta \big\rangle = \sum_{g\in G} \big\langle \xi(g),\eta(g)\big\rangle$$ and the 
natural module right action of $B$ given by
$$(\xi\cdot b )(g) = \xi(g)\, b \,, \quad g \in G\,.$$

The  {\it regular covariant  representation} 
$(\tilde{\pi},  \tilde{\lambda}_\pi)$ of $\Sigma$ on $Y^G$ associated to $\pi$ is then
defined by
\begin{align}
(\tilde{\pi}(a)\xi)(h) & = \pi\big(\alpha_h^{-1}(a)\big)\xi(h) \ , 
\hspace{10ex}  a \in A, \, \xi \in Y^G, \, h \in G, \ \\
(  \tilde{\lambda}_\pi(g)\xi)(h) & = 
\pi\big(\alpha_h^{-1}(\sigma(g,g^{-1}h))\big)\xi(g^{-1}h) \ , 
\quad g,h \in G, \, \xi \in Y^G \, .
\end{align}

\medskip Considering  $A$ as a Hilbert $A$-module in the standard way and letting $\ell: A \to \L(A)$  be given  by $\ell(a)(a')= a  a' , \, a, a' \in A$, we get {\it the regular covariant  representation} 
$(\tilde{\ell}, \tilde{\lambda}_\ell)$
  associated to $\ell$, that
   acts on the Hilbert $A$-module  
   \begin{equation}
A^G 
= \Big\{\xi: G \to A \ | \ \sum_{g \in G} \, \xi(g)^*\,\xi(g) \ 
\mbox{is norm-convergent in  $A$}\Big\}
\end{equation}
 in the following way:
\begin{align}
(\tilde{\ell}(a)\xi)(h) & = \alpha_h^{-1}(a) \,\xi(h) \ , 
\hspace{10ex}  a \in A, \, \xi \in A^G, \, h \in G, \ \\
(  \tilde{\lambda}_\ell(g)\xi)(h) & = 
\alpha_h^{-1}(\sigma(g,g^{-1}h))\,\xi(g^{-1}h) \ , 
\quad g,h \in G, \, \xi \in A^G \, .
\end{align}

The {\it reduced twisted crossed product} $C^*_r(\Sigma)$ is defined as the C$^*$-subalgebra of $\L(A^G)$ generated by $\tilde{\ell}(A)$ and $\tilde{\lambda}_\ell(G)$. 

\smallskip Setting $\Lambda = \tilde{\ell}\times \tilde{\lambda}_\ell$, we have
  $C^*_r(\Sigma) = \Lambda(C^*(\Sigma))$.
Moreover, $C^*_r(\Sigma) \simeq (\tilde{\pi}\times   \tilde{\lambda}_\pi)(C^*(\Sigma))$ whenever $\pi: A \to \L(Y)$ is a faithful representation of $A$ on any Hilbert C$^*$-module $Y$ (e.g. a Hilbert space).

\medskip 

It turns out to be useful to also consider the Hilbert $A$-module 
  $$A^\Sigma= \Big\{\xi: G \to A \ | \ \sum_{g \in G}  \alpha_g^{-1}\big(\xi(g)^*\, \xi(g)\big) \ 
\mbox{is norm-convergent in  $A$} \Big\} \, ,$$
where 
the right action of $A$ on $A^\Sigma$ and the $A$-valued scalar product are defined by
$$(\xi \times a )(g) = \xi(g)\,\alpha_g(a)\, ,$$
$$\langle\xi,\eta \rangle_\alpha = \sum_{g\in G} \alpha_g^{-1}\big(\xi(g)^*\eta(g)\big)\, ,$$
the associated norm on $\A^\Sigma$ being given by \,
 $\|\xi\|_\alpha = \big\| \,\sum_{g\in G} \alpha_g^{-1}\big(\xi(g)^*\xi(g)\big) \,\big\|^{1/2}\,.$

\smallskip As $A^G$ and $A^\Sigma$ are unitarily equivalent via 
the unitary operator $J:A^G \to A^\Sigma$ given by
$$(J\eta)(g)=\alpha_g(\eta(g))\, , \quad \eta \in A^G, \, g \in G\,,$$
we get a covariant representation $(\ell_\Sigma, \lambda_\Sigma)$ of $\Sigma$ on $A^\Sigma$ given by $$\ell_\Sigma(a) = J \,\tilde{\ell}(a) \, J^* \, , \quad  \lambda_\Sigma(g) =  J \,\tilde{\lambda}_\ell(g) \, J^* \, ,$$
that is, 
\begin{equation*}\label{LS}\big(\ell_\Sigma(a)\xi\big)(h)  = a\, \xi(h)\, , 
\end{equation*}
\begin{equation*}\label{laS}
\big(\lambda_\Sigma(g)\xi\big)(h) = 
\alpha_g(\xi(g^{-1}h))\, \sigma(g, g^{-1}h)\, ,
\end{equation*}
where $a \in A, \, \xi \in A^\Sigma, \, g,\, h \in G$.

\medskip As $\Lambda_\Sigma= \ell_\Sigma \times \lambda_\Sigma$  is 
unitarily equivalent to $\Lambda$, we may identify $C_r^*(\Sigma)$ with $\Lambda_\Sigma(C^*(\Sigma))$. Further, we may also identify $A$ with $\ell_\Sigma(A)$, so $A$ acts on $A^\Sigma$ via  $$(a\, \xi)(h)  = a\, \xi(h)\, , \, \, a\in A\, , \, \xi \in A^\Sigma\, , \, h \in H\,.$$
Letting $C_c(\Sigma)$
denote the set of functions from 
$G$ into $A$ with finite support, and identifying it with its canonical copy inside $C^*(\Sigma)$, we get
$$ \Lambda_\Sigma(f) = \sum_{g\, \in\,  \text{supp}(f)} f(g) \, \lambda_{\Sigma} (g)\,,  \quad f \in C_c(\Sigma)\, .$$
Especially, letting  $a \odot  \delta_g$  denote the function in  $C_{c}(\Sigma)$ which is 
 0 everywhere except at the point $g \in G$ where it takes the value $a \in A$, we have
 $$  \Lambda_\Sigma(a \odot  \delta_g) = a \, \lambda_{\Sigma} (g)\,.$$

\medskip The {\it Fourier transform} is  the (injective, linear) map $\, x \to \widehat{x}$ from $C^*_r(\Sigma)$ into $A^\Sigma$ given by
 $$\widehat{x} = x \, \xi_0\,$$ 
where $\xi_0 = 1\odot \delta_e \in A^\Sigma$. 

\medskip When  $f \in C_c(\Sigma)$ and  $x \in C^*_r(\Sigma)$, we have
 \begin{equation}\label{norm-ineq}
 \widehat{\Lambda_\Sigma(f)} = f \, \, , \quad  \quad \|\widehat{x}\|_\infty \leq \|\widehat{x}\|_\alpha \leq \|x\|\, ,
 \end{equation}
where $\, \|\widehat{x}\|_\infty = \sup_{g\in G} \|\widehat{x}(g)\|$.

\medskip The {\it canonical  conditional expectation} $E$ from $C^*_r(\Sigma)$ onto $A$ 
is given by  
$E(x) = \widehat{x}(e) $. 
It satisfies that $E(\Lambda_\Sigma(f)) = f(e)\, , \, f\in C_c(\Sigma)$. 
Moreover, we have 
$$E(x^*x) = \|\widehat{x}\|_\alpha^{\,2}\, \,, \quad \quad E( x\, \lambda_\Sigma(g)^*) = \widehat{x}(g)\,\,  ,\quad \quad E\big(\lambda_\Sigma(g)\, x \,\lambda_\Sigma(g)^*\big) = \alpha_g(E(x))$$  
 for all $x \in C^*_r(\Sigma), \, g\in G$.

 \bigskip Another  concept, slightly adapted from \cite{EKQR-0}, that will be of importance to us is the following: An {\it equivariant representation} 
of $\Sigma$ on a Hilbert $A$-module $X$ 
is  a pair $(\rho, v)$ where 
 $\rho : A \to \L(X)$ is a representation of $A$ on $X$ and  $v$ is a map from $G$ into the group $\mathcal{I}(X)$ consisting of all $\mathbb{C}$-linear, invertible, bounded maps from $X$ into itself, which  satisfy:
\begin{itemize}
\item[(i)]  \quad $\rho(\alpha_g(a))  = v(g) \, \rho(a) \, v(g)^{-1}\, , \quad  \quad g\in G\,, \,  a \in A$
\item[(ii)]  \quad$v(g)\, v(h)  = {\rm ad}_\rho(\sigma(g,h)) \, v(gh) \, , \quad   \quad g, h \in G$
\item[(iii)] \quad $\alpha_g\big(\langle x \, ,\, x' \rangle\big)   = \langle v(g) x\, ,\, v(g) x' \rangle\, , \quad  \quad g\in G\, , \, x,\,  x' \in X\,$ 
\item[(iv)]  \quad$v(g)(x \cdot a)  = (v(g) x)\cdot \alpha_g(a)\, , 
\quad  \quad \, g \in G,\, x\in X,\, a \in A$. 
\end{itemize}
In (ii) above, $ {\rm ad}_\rho(\sigma(g,h)) \in \mathcal{I}(X) $ is defined by
$${\rm ad}_\rho(\sigma(g,h)) \,x = \big(\rho(\sigma(g,h))\, x \big)\cdot \sigma(g,h)^* \, , \quad g, h \in G,\, x \in X. $$
The central part of $X$ is defined by 
$$Z_X = \{z \in X \mid \rho(a)z = z\cdot a \, \, \text{for all} \, \, a \in A\}\, .$$
An important feature is that whenever  $(\pi,u)$ is  a covariant representation of $\Sigma$ on some Hilbert $B$-module $Y$,  we can form the product covariant representation  $(\rho\dot\otimes\pi\, , \, v\dot\otimes u)$ of $\Sigma$ on the Hilbert $B$-module $X\otimes_\pi Y$, see \cite[Section 4]{BeCo3}.

\medskip The  {\it trivial equivariant representation} of $\Sigma$ is  the pair $(\ell, \alpha)$ acting on the $A$-module $A$ 
(with its canonical structure). 
The {\it regular equivariant representation} of $\Sigma$ is the pair $(\check{\ell}, \check{\alpha})$  on $A^G$ defined by
 $$(\check{\ell}(a)\, \xi)(h) = a\, \xi(h)\, \quad 
(\check{\alpha}(g)\,\xi)(h) = \alpha_g(\xi(g^{-1}h))$$
where $a \in A, \, \xi \in A^G, \, g, h \in G$. 
More generally, if  $(\rho,v)$ is an equivariant 
representation of $\Sigma$ on a Hilbert $A$-module $X$,
it induces an equivariant 
representation $(\check{\rho},\check{v})$ 
of $\Sigma$ on $X^G$ given  by
$$(\check{\rho}(a)\xi)(h)  = \rho(a)\xi(h), \quad
(\check{v}(g)\xi)(h)  = v(g)\xi(g^{-1}h) \ ,
$$
for all $a \in A, \, \xi \in X^G,\, g,h \in G$.

\medskip 
 We recall from \cite{BeCo3} that $\Sigma$ is said to have the  {\it weak approximation property} if there exist an equivariant representation $(\rho, v)$ of $\Sigma$ on some $A$-module $X$ and 
nets $\{\xi_i\}, \ \{\eta_i\} $ in $ X^G$, (that  both may be chosen with finite support) satisfying
\begin{itemize}
\item[a)] there exists some $ M > 0$ such that $\|\xi_i\| \cdot \|\eta_i\| \leq M $ \ for all $i $;
\item[b)] for all $g \in G$ and $a \in A$ we have $\lim_i \| \big\langle \xi_i\,,\,\check\rho(a)\check{v}(g)\eta_i \big\rangle  - a\|= 0$, i.e.,
$$\, \lim_i   \sum_{h \in G} \Big\langle \xi_i(h)\, , \,  \rho(a) \,v(g)\eta_i(g^{-1}h)\Big\rangle \, =\,  a\, .$$
\end{itemize}
As shown in \cite[Theorem 5.11]{BeCo3},  the weak approximation property is enough to ensure regularity of $\Sigma$, that is, $\Lambda: C^*(\Sigma) \to C_r^*(\Sigma)$ is then an isomorphism. 

\medskip If $(\rho, v)$ can be chosen to be equal to $(\ell, \alpha)$ in the above definition, one recovers the {\it  approximation property} introduced by Exel \cite{Ex} (see also \cite{ExNg}).  

\smallskip If $\{\xi_i\}$ or $ \{\eta_i\} $ (resp. $\{\xi_i\}$ and $ \{\eta_i\} $) can be chosen to lie in the central part of $X^G$, we will say that $\Sigma$ has the  {\it half-central} (resp. {\it central}) {\it weak approximation property}. 
See Remarks 5.9 and 5.10 in \cite{BeCo3} for a discussion of other related notions.

\section{Convergence of Fourier series}\label{Convergence}
Given $x \in C^*_r(\Sigma)\subset \L(A^\Sigma)$, 
its (formal) {\it Fourier series} is defined by
$$\sum_{g \in G}\, \widehat{x}(g)\, \lambda_\Sigma(g)\,.$$ 
It is well known that this series 
will not necessarily be convergent w.r.t.\ the operator norm $\|\cdot\|$ on $\L(A^\Sigma)$ (even in the classical case where $A,\,  \alpha$ and $\sigma$ are all trivial and $G$ is abelian).

\medskip  However, if we consider the norm on $C^*_r(\Sigma)$ given by  
$\, \|x\|_\alpha =  \|\widehat{x}\|_\alpha \, ,$
then the Fourier series of $x \in C^*_r(\Sigma)$ converges to $x$ w.r.t. 
$\|\cdot\|_\alpha$\,.

\medskip Indeed, for $F \subset G$, $F$ finite,  set  $x_F=  \sum_{g \in F} 
\widehat{x}(g)\,\Lambda_\Sigma(g)$. Letting $\chi_F$ denote the characteristic function of $F$ in $G$, we have
$$\widehat{x_F}(g) = 
\begin{cases}\widehat{x}(g), & g \in F \\ \, \, 0\quad ,  & g\not\in F \end{cases}\, \,  = \, \big(\widehat{x}\,  \chi_F\big)(g)$$
for all $g \in G$.
It  follows 
 that $\|x_F -x\|_\alpha = \|\widehat{x}\,  \chi_F - \widehat{x}\|_\alpha \to 0 $ as $F\uparrow G$. 
 
  \medskip
For later use we also record a related fact.

\begin{proposition}\label{Four}
Let $\xi: G \to A$ and assume that $\sum_{g\in G} 
\xi(g)\lambda_\Sigma(g)$ converges to some $x \in 
C^*_r(\Sigma)$ w.r.t. $\|\cdot\|_\alpha$. 
Then $\xi \in A^\Sigma$ and $\xi = \widehat{x}$.
\end{proposition}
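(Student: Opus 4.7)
The plan is to turn the assumed convergence of the Fourier series of $\xi$ in the $\|\cdot\|_\alpha$-norm into convergence of the corresponding Fourier transforms in $A^\Sigma$, and then read off $\xi$ pointwise.

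For each finite $F\subset G$, set $x_F = \sum_{g\in F}\xi(g)\,\lambda_\Sigma(g) = \Lambda_\Sigma(\xi\chi_F)$, which makes sense because $\xi\chi_F\in C_c(\Sigma)$. By (\ref{norm-ineq}), we have $\widehat{x_F} = \xi\chi_F$. The hypothesis $\|x-x_F\|_\alpha \to 0$ as $F\uparrow G$ therefore translates, by the very definition of $\|\cdot\|_\alpha$ on $C^*_r(\Sigma)$, into
$$\|\widehat{x} - \xi\chi_F\|_\alpha = \|\widehat{x-x_F}\|_\alpha = \|x-x_F\|_\alpha \longrightarrow 0,$$
so $\xi\chi_F \to \widehat{x}$ in $A^\Sigma$ w.r.t.\ $\|\cdot\|_\alpha$.

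Next I would observe that evaluation at any fixed point $g\in G$ is continuous from $(A^\Sigma,\|\cdot\|_\alpha)$ to $A$. Indeed, for $\eta\in A^\Sigma$ the inequality
$$\alpha_g^{-1}\bigl(\eta(g)^*\eta(g)\bigr) \;\leq\; \sum_{h\in G}\alpha_h^{-1}\bigl(\eta(h)^*\eta(h)\bigr)$$
in $A^+$ gives $\|\eta(g)\|^2 = \|\alpha_g^{-1}(\eta(g)^*\eta(g))\| \leq \|\eta\|_\alpha^{\,2}$, i.e.\ $\|\eta(g)\|\leq \|\eta\|_\alpha$.

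Applying this to $\eta = \widehat{x}-\xi\chi_F$, we get $(\xi\chi_F)(g) \to \widehat{x}(g)$ in $A$ for every $g\in G$. But as soon as $F$ contains $g$, we have $(\xi\chi_F)(g)=\xi(g)$, so the limit is $\xi(g)$ itself. Hence $\xi(g)=\widehat{x}(g)$ for every $g\in G$, which proves both $\xi = \widehat{x}$ and $\xi\in A^\Sigma$.

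There is no real obstacle here: the only substantive point is to notice that pointwise evaluation $\eta\mapsto\eta(g)$ is $\|\cdot\|_\alpha$-continuous (via the trivial domination of a single summand by the full positive series), which turns the $\|\cdot\|_\alpha$-convergence delivered by the hypothesis into the desired identification of $\xi$ with $\widehat{x}$.
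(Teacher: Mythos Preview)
Your proof is correct and essentially identical to the paper's own argument: both truncate to finite partial sums, use $\widehat{x_F}=\xi\chi_F$, pass from $\|\cdot\|_\alpha$-convergence of $x_F$ to $\|\cdot\|_\alpha$-convergence of the Fourier transforms, and then use the pointwise estimate $\|\eta(g)\|\leq\|\eta\|_\alpha$ to conclude $\xi(g)=\widehat{x}(g)$. The only cosmetic difference is that you phrase the pointwise estimate as continuity of evaluation, whereas the paper writes out the inequality directly.
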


\begin{proof} 
Let  $F$ be a finite subset  of $G$ and set
 $$y_F = \sum_{g \in F} \xi(g)\lambda_\Sigma(g)\,.$$ 
Then $\widehat{y_F} = \xi_F$ where $\xi_F=\xi \, \chi_F$.
 As $y_F \to x$ w.r.t. $\|\cdot\|_\alpha$ by assumption,
we have $\xi_F = \widehat{y_F} \to \widehat{x}$ w.r.t. $\|\cdot\|_\alpha$.
Hence, for every $g \in G$,  we get
$$\|(\widehat{x}-\xi_F)(g)\|^2 = \| \alpha_g^{-1}\big((\widehat{x}-\xi_F)(g)^* (\widehat{x} -\xi_F)(g))\big)\| 
\leq \|\widehat{x}-\xi_F\|^2_\alpha \, \to 0\, $$
as $F \uparrow G$. 
That is, $\xi_F(g) \to \widehat{x}(g)$ in norm for every $g\in G$, which gives
$$\xi(g) = \lim_{F \uparrow G} \xi_F(g) = \widehat{x}(g), \quad g \in G \,. $$
So $\xi= \widehat{x} \in A^\Sigma$, as asserted.
\end{proof}
We set 
$$CF(\Sigma)= \Big\{ x\in C_r^*(\Sigma)\, \, \big| \, \, \sum_{g \in G}\, \widehat{x}(g)\, \lambda_\Sigma(g) \, \textup{is convergent 
w.r.t.} \, \|\cdot\| \Big\}\,.$$

\medskip Note that if $x \in CF(\Sigma)$, then, as $\| \cdot\|_\alpha \leq \|\cdot\| $, it readily follows from 
what we just have seen
that the Fourier series of $x$ necessarily converges to $x$ w.r.t.\ $\|\cdot\|$. 
In order to describe some subspaces of  $CF(\Sigma)$, we adapt some definitions from \cite{BeCo2}.

\medskip Let $\L$ be a subspace of $A^\Sigma $ containing 
$C_c(\Sigma)$
and let $\|\cdot \|'$ be a norm on $\L$.
If $\xi \in \L$, then we will say that $\xi \to 0$ {\it at infinity} (w.r.t. 
$\|\cdot\|'$) when, for every $\epsilon >0$, 
there exists a finite subset $F_0$ of $G$ such that 
$\|\xi_F\|' < \epsilon$ for all finite subsets $F$ disjoint from $F_0$. 
We will also say that $\Sigma$ has the {\it $\L$-decay property} (w.r.t. $\|\cdot\|'$)
if $\xi \to 0$ at infinity for every $\xi \in \L$
and 
there exists some $C > 0$ such that
\begin{equation} \label{C}
\|\Lambda_\Sigma(f)\| \leq C \, \|f\|', \quad f \in C_c(\Sigma) \,. 
\end{equation}

\begin{proposition} \label{L-decay}
Let $\Sigma$ have the $\L$-decay property $($w.r.t. \ $\|\cdot\|'$$)$ and $\xi \in \L$.

\medskip \noindent Then $\sum_{g \in G} \xi(g)\, \lambda_\Sigma(g)$ converges in 
operator norm to some $x \in C^*_r(\Sigma)$ satisfying $\widehat{x} = \xi$.

\medskip \noindent Denoting this $x$ by $\tilde\Lambda_\Sigma(\xi)$, and
letting $\tilde\Lambda_\Sigma: \L \to C^*_r(\Sigma)$ be the associated 
map,
we have
\begin{equation}
\tilde\Lambda_\Sigma(\L) = \big\{x \in C^*_r(\Sigma) \ | \ \widehat{x} \in \L\big\} 
\subseteq CF(\Sigma) \ . 
\end{equation}
\end{proposition}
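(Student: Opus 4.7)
The plan is to prove convergence of the partial sum net $\{y_F\}_{F}$ (indexed by finite subsets $F \subseteq G$, ordered by inclusion) in operator norm by showing it is Cauchy, using both ingredients of the $\L$-decay hypothesis, and then identify the limit via Proposition \ref{Four}.

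First I would set $y_F = \sum_{g \in F} \xi(g)\,\lambda_\Sigma(g) = \Lambda_\Sigma(\xi_F)$ for every finite $F \subseteq G$, viewing $\xi_F = \xi\,\chi_F$ as an element of $C_c(\Sigma)$. Given $\varepsilon>0$, the decay-at-infinity assumption on $\xi$ yields a finite $F_0 \subseteq G$ such that $\|\xi_{F'}\|' < \varepsilon$ whenever $F'$ is finite and disjoint from $F_0$. For any two finite $F_1, F_2 \supseteq F_0$, writing $\xi_{F_1} - \xi_{F_2} = \xi_{F_1\setminus F_2} - \xi_{F_2\setminus F_1}$ and noting both index sets are disjoint from $F_0$, the estimate \eqref{C} gives
$$\|y_{F_1} - y_{F_2}\| = \|\Lambda_\Sigma(\xi_{F_1} - \xi_{F_2})\| \leq C\,\|\xi_{F_1} - \xi_{F_2}\|' \leq C\,(\|\xi_{F_1\setminus F_2}\|' + \|\xi_{F_2\setminus F_1}\|') < 2C\varepsilon.$$
Hence $\{y_F\}$ is Cauchy in operator norm and converges to some $x \in C^*_r(\Sigma)$.

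Next, since $\|\cdot\|_\alpha \leq \|\cdot\|$ on $C_r^*(\Sigma)$ by \eqref{norm-ineq}, the net $\{y_F\}$ also converges to $x$ w.r.t.\ $\|\cdot\|_\alpha$, and Proposition \ref{Four} immediately gives $\xi = \widehat{x}$. This simultaneously shows that the Fourier series of $x$ converges to $x$ in operator norm, so $x \in CF(\Sigma)$. Defining $\tilde\Lambda_\Sigma(\xi) := x$, we obtain a well-defined map $\tilde\Lambda_\Sigma : \L \to C^*_r(\Sigma)$, and by construction $\tilde\Lambda_\Sigma(\L) \subseteq \{x \in C^*_r(\Sigma) \mid \widehat{x} \in \L\} \subseteq CF(\Sigma)$.

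For the reverse inclusion, I would take any $x \in C^*_r(\Sigma)$ with $\widehat{x} \in \L$ and apply the Cauchy argument above to $\xi := \widehat{x}$. This yields some $x' \in C^*_r(\Sigma)$ with $\widehat{x'} = \widehat{x}$, and injectivity of the Fourier transform (recalled in \eqref{norm-ineq}) forces $x = x' = \tilde\Lambda_\Sigma(\widehat{x})$. The only subtle step is the Cauchy estimate; once the symmetric-difference decomposition is in place, everything else is bookkeeping. There is nothing genuinely hard here, as the $\L$-decay property was designed exactly to make this work.
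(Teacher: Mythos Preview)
Your proof is correct and follows essentially the same approach as the paper's: the paper simply states that the argument is a verbatim adaptation of \cite[Lemma 3.4 and Theorem 3.5]{BeCo2}, using Proposition \ref{Four} in place of \cite[Proposition 2.10]{BeCo2}, and your Cauchy-net argument via the symmetric-difference decomposition together with the appeal to Proposition \ref{Four} is exactly that adaptation spelled out in detail. One very minor point: injectivity of the Fourier transform is stated in the text preceding \eqref{norm-ineq} rather than in \eqref{norm-ineq} itself, so you may want to adjust that reference.
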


\begin{proof}
This proposition is the direct analogue of \cite[Lemma 3.4]{BeCo2} and \cite[Theorem 3.5]{BeCo2}, and their proofs
 adapt in a verbatim way (using Proposition \ref{Four} instead of \cite[Proposition 2.10]{BeCo2}). 

\end{proof}

For example, consider 
$$\ell^1(G,A) = \{\xi : G \to A \ | \ \sum_{g \in G} \|\xi(g)\| < 
\infty\}\, $$ and let $\|\cdot\|_1$ denote the associated norm. Then, clearly, $\ell^1(G,A)$ is a subspace of $A^\Sigma$. Moreover,   $\Sigma$ has the $\ell^1(G,A)$-decay property (w.r.t. $\|\cdot\|_1$): $\xi \to 0$ at infinity for every $\xi \in  \ell^1(G,A)$ (since this property holds in $\ell^1(G)$) and
$$\| \Lambda_\Sigma(f)\| \leq \sum_{g \,\in \,\text{supp}(f)} \|f(g)\, \lambda_\Sigma(g)\| =\sum_{g \,\in \,\text{supp}(f)} \|f(g)\| = \| f\|_1 $$
holds for every $f \in C_c(\Sigma)$.

\medskip The space $\ell^2(G,A) = \{\xi : G \to A \ | \ \sum_{g \in G} \|\xi(g)\|^2 < 
\infty\}\,$, 
equipped with its natural norm $\|\cdot\|_2$, is also a subspace of $A^\Sigma$, but it can not be expected that  $\Sigma$ will have the $\ell^2(G,A)$-decay property (as this is not true when $A=\Complessi$, unless if $G$ is finite).
 
 \smallskip We may instead consider weighted $\ell^2$-spaces. Dealing only with the scalar-valued case, we 
pick some function $\kappa: G \to [1,+\infty)$
and equip $$\ell^2_\kappa(G,A) = \big\{\xi: G \to A \ | \ \sum_g \|\xi(g)\|^2 \kappa(g)^2 < + \infty\big\}$$
with its natural norm $\|\xi\|_{2,\kappa} = \|\xi\, \kappa\|_2\,$.

\medskip For example, assume that  $\kappa^{-1} \in \ell^2(G)$.  Then $\Sigma$  
has the $\ell^2_\kappa(G,A)$-decay 
property (w.r.t. $\|\cdot\|_{2,\kappa}$).
Indeed, it follows  from the Cauchy-Schwarz inequality that
$$\|\Lambda_\Sigma(f)\| \leq \|f\|_1 \leq  \|\kappa^{-1} \|_2 \, \|f\|_{2,\kappa}\, \quad $$
for every $f \in C_c(\Sigma)$, and the assertion easily follows.

\bigskip

In \cite{BeCo2}, we introduced the notion of {\it $\kappa$-decay} for the group $G$. It just expresses that
the system $(\Complessi, G, {\rm id}, 1)$ has the $\ell^2_\kappa(G,\Complessi)$-decay property (w.r.t. $\|\cdot\|_{2,\kappa}$). Note that the inequality (\ref{C}) then just amounts to $$\| f*g\|_2 \leq C \, \|f\|_{2, \kappa}\, \|g\|_2$$
for all $f\in C_c(G)\, $ and $ \, g \in \ell^2(G)$ (where $f*g$ denotes the usual convolution product), and the least possible $C>0$ is called the {\it decay constant}. One should note that any countable group is $\kappa$-decaying for suitable choices of $\kappa$ with relatively slow growth, see for example
\cite[Lemma 3.14]{BeCo2}.

\begin{proposition} \label{k-decay}
Suppose that $G$ is $\kappa$-decaying. 
Then $\Sigma$ 
has the $\ell^2_\kappa(G,A)$-decay property $($w.r.t. $\|\cdot\|_{2,\kappa}$$)$. 
Hence, if $x \in C^*_r(\Sigma)$ and $\widehat{x} \in \ell^2_\kappa(G,A)$, then $x \in CF(\Sigma)$.
\end{proposition}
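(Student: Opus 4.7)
The plan is to verify the two defining conditions of the $\ell^2_\kappa(G,A)$-decay property. For the decay-at-infinity condition, I would simply invoke the Cauchy criterion for the convergent scalar series $\sum_g \|\xi(g)\|^2 \kappa(g)^2$: given $\varepsilon > 0$, pick a finite $F_0 \subset G$ outside of which the partial tails are smaller than $\varepsilon^2$, and then $\|\xi_F\|_{2,\kappa} < \varepsilon$ for every finite $F$ disjoint from $F_0$.

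The substantive step is to produce a constant $C > 0$ such that $\|\Lambda_\Sigma(f)\| \leq C\,\|f\|_{2,\kappa}$ for every $f \in C_c(\Sigma)$. The natural candidate is the $\kappa$-decay constant of $G$, and the strategy is to dominate the twisted operator $\Lambda_\Sigma(f)$ pointwise by scalar convolution with the nonnegative function $F(g) := \|f(g)\|$ on $G$. Concretely, I would pick a faithful representation $\pi: A \to \L(H)$ on a Hilbert space $H$, use the isomorphism $C_r^*(\Sigma) \simeq (\tilde\pi \times \tilde\lambda_\pi)(C^*(\Sigma))$ recalled in Section \ref{Preliminaries} to replace $\Lambda_\Sigma(f)$ by $(\tilde\pi \times \tilde\lambda_\pi)(f)$ acting on $H^G$, and carry out the estimate there.

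For $\xi \in H^G$, a direct substitution of the formulas for $(\tilde\pi, \tilde\lambda_\pi)$ gives
\[
\big((\tilde\pi \times \tilde\lambda_\pi)(f)\,\xi\big)(h) \,=\, \sum_{g \in G} \pi\big(\alpha_h^{-1}(f(g)\,\sigma(g,g^{-1}h))\big)\,\xi(g^{-1}h).
\]
Since $\pi$ is a $*$-homomorphism, $\alpha_h^{-1}$ an automorphism, and $\sigma(g,g^{-1}h)$ is unitary (so right multiplication by it is isometric in $A$), taking Hilbert-space norms and applying the triangle inequality yields the pointwise majorization
\[
\big\|\big((\tilde\pi \times \tilde\lambda_\pi)(f)\,\xi\big)(h)\big\| \,\leq\, (F * \Xi)(h),
\]
where $\Xi(h) := \|\xi(h)\|$ and $*$ denotes ordinary convolution on $G$. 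Squaring, summing over $h$, and applying the $\kappa$-decay of $G$ then gives
\[
\big\|(\tilde\pi \times \tilde\lambda_\pi)(f)\,\xi\big\|^2 \,\leq\, \|F * \Xi\|_2^{\,2} \,\leq\, C^2\,\|F\|_{2,\kappa}^{\,2}\,\|\Xi\|_2^{\,2} \,=\, C^2\,\|f\|_{2,\kappa}^{\,2}\,\|\xi\|^2,
\]
so $\|\Lambda_\Sigma(f)\| \leq C\,\|f\|_{2,\kappa}$, which is (\ref{C}). The final assertion about $CF(\Sigma)$ is then immediate from Proposition \ref{L-decay}.

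I do not anticipate a serious obstacle here: the only non-bookkeeping content is the pointwise majorization by a scalar convolution, which works because every ingredient appearing between the $\pi$'s and the $\xi(g^{-1}h)$ is norm-preserving on $A$. Once this reduction is in hand, the whole estimate is a direct transcription of the scalar $\kappa$-decay hypothesis for $G$.
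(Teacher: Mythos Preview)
Your proposal is correct and follows essentially the same approach as the paper: both pass to a faithful Hilbert-space representation $(\tilde\pi,\tilde\lambda_\pi)$ on $H^G$, establish the pointwise majorization $\|((\tilde\pi\times\tilde\lambda_\pi)(f)\,\xi)(h)\|\leq (F*\Xi)(h)$ with $F(g)=\|f(g)\|$ and $\Xi(h)=\|\xi(h)\|$, and then invoke the scalar $\kappa$-decay inequality and Proposition~\ref{L-decay}. Your treatment of the decay-at-infinity condition via the Cauchy criterion is a minor rephrasing of the paper's observation that $g\mapsto\|\xi(g)\kappa(g)\|$ lies in $\ell^2(G)$.
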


\begin{proof} If $\xi \in \ell^2_\kappa(G,A)$, then $\xi \to 0$ at infinity w.r.t. $\|\cdot\|_{2, \kappa}$ (since $g \to\|(\xi\kappa)(g)\|$ is a function in $\ell^2(G)$ and therefore goes to 0 at infinity). 

\smallskip \noindent Next, we pick a faithful representation $\pi$  of $A$ on some Hilbert space $\H$ with associated norm $\|\cdot\|_\H$, and  form the regular representation
 $\tilde{\pi}\times\tilde{\lambda}_\pi$ of $C^*(\Sigma)$ on $\tilde\H=\ell^2(G, \H)$ with associated norm given by $\|\eta\|_{\H}=(\sum_{g \in G} \|\eta(g)\|^2_\H)^{1/2}$. 
 
\medskip As $ \| \Lambda_\Sigma(f)\| = \| \Lambda(f)\|= \|(\tilde{\pi}\times\tilde{\lambda}_\pi) f\|$ for all $f \in C_c(\Sigma)$, it suffices to show that
there exists some $C>0$ such that for all $f \in C_c(\Sigma)$ we have $$\|(\tilde{\pi}\times\tilde{\lambda}_\pi) f\| \leq C\, \|f\|_{2,\kappa}\,.$$
So let $f \in C_c(\Sigma)$. Set $F ={\rm supp}(f)$ and define  $\tilde{f}(g) = \|f(g)\|\, , \, g\in G$, 
so $\tilde{f} \in C_c(G)$.
Then
$$\|f\|_{2,\kappa}^2 = \sum_{g\in F} \|f(g)\kappa(g)\|^2 
= \sum_{g\in F} |\tilde{f}(g)|^2 \kappa(g)^2 = \|\tilde{f}\|_{2,\kappa}\,.$$
Let $\xi \in \tilde\H=\ell^2(G,\H)$ and set $\tilde{\xi}(g) = \|\xi(g)\|_\H\, , \, g\in G$,
so $\tilde\xi \in \ell^2(G)$ and $\|\tilde\xi\|_2 = \|\xi\|_{\tilde\H}$.

\medskip 

\noindent Let now
$h \in G$.
Then we have
\begin{align*}
\|[((\tilde{\pi}\times\tilde{\lambda}_\pi)f)\xi](h)\|_\H & =
\Big\|\Big(\sum_{g\in F}\,  \tilde{\pi}(f(g)) \, \tilde{\lambda}_\pi(g)\xi\Big)(h)\Big\|_\H 
\\
& = \Big\|\sum_{g\in F}\,  \pi\big(\alpha^{-1}_h(f(g))\big) 
\pi\big(\alpha^{-1}_h(\sigma(g,g^{-1}h))\big) 
\xi(g^{-1}h) \Big\|_\H \\
& \leq \sum_{g\in F}\,  \big\| \pi\big(\alpha^{-1}_h(f(g))\big) \big\| \; 
\big\| \pi\big(\alpha^{-1}_h(\sigma(g,g^{-1}h))\big) 
\xi(g^{-1}h) \big\|_\H \\
& = \sum_{g\in F} \, \|f(g)\| \; \| \xi(g^{-1}h)\|_\H  = \sum_{g\in F} \, \tilde{f}(g) \, \tilde\xi(g^{-1}h) \\
& = (\tilde{f} * \tilde\xi)(h) \ . 
\end{align*}

\noindent This gives
\begin{align*}
\Big\| ((\tilde{\pi}\times\tilde{\lambda}_\pi)f) \xi \Big\|_{\tilde\H}^2 & 
\leq \sum_{h \in G} |(\tilde{f} \ast \tilde\xi)(h)|^2  = \|\tilde{f} \ast \tilde\xi\|_2^2 \\
& \leq C^{\,2} \, \|\tilde{f}\|_{2,\kappa}^2 \, \|\tilde{\xi}\|_2^2  = C^{\,2} \,\| f \|_{2,\kappa}^2 \, \|\xi\|_{\tilde\H}^2 \ ,
\end{align*}
where  $C$ denotes
the decay constant of $G$ w.r.t. $\kappa$.
Hence, 
\begin{equation*}
\|(\tilde{\pi}\times\tilde{\lambda}_\pi)f\| \leq C \|f\|_{2,\kappa}
\end{equation*}
as desired. The last assertion then follows from Proposition \ref{L-decay}. 

\end{proof}

We may now obtain a generalized version of \cite[Theorem 3.15]{BeCo2}. It relies on the concept of polynomial (resp.\ subexponential) H-growth that we introduced in \cite{BeCo2}. For amenable groups these concepts of growth coincide with the classical ones. For other examples, 
see \cite[Section 3]{BeCo2}. 

\begin{corollary} \label{M1} Let $L: G\to [0, \infty)$ be a proper function. 
   
   \smallskip If $G$ has polynomial H-growth $($w.r.t.\ $L$$)$, then there exists some $s >0 $ such that 
   the Fourier series of 
   $x \in C_{r}^*(\Sigma)$ 
   converges  to $x$
   in operator norm  whenever $$\sum_{g\in G} \| \widehat{x}(g)\|^2 \, (1 + L(g))^{s} < \infty \,.$$
   
   If $G$ has subexponential H-growth (w.r.t.\ $L$), then the Fourier series of $x \in C_{r}^*(\Sigma)$ converges  to $x$
   in operator norm  whenever there exists some $t > 0$ such that  $$\sum_{g\in G} \|\widehat{x}(g)\|^2 \,\exp(t L(g)) < \infty \,.$$
   \end{corollary}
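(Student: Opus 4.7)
The plan is to reduce both assertions to Proposition \ref{k-decay} by choosing weights $\kappa$ tailored to the type of H-growth, and then invoke the corresponding $\kappa$-decay results for $G$ established in \cite[Theorem 3.15]{BeCo2} (or the preparatory lemmas leading to it).

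First, suppose $G$ has polynomial H-growth with respect to $L$. Then, by \cite[Theorem 3.15]{BeCo2}, there exists $s>0$ such that $G$ is $\kappa$-decaying for the weight $\kappa(g)=(1+L(g))^{s/2}$. Fix such an $s$. If $x\in C^*_r(\Sigma)$ satisfies $\sum_{g\in G}\|\widehat{x}(g)\|^2 (1+L(g))^{s}<\infty$, then by definition $\widehat{x}\in \ell^2_\kappa(G,A)$. Proposition \ref{k-decay} then yields that $\Sigma$ has the $\ell^2_\kappa(G,A)$-decay property, hence $x\in CF(\Sigma)$; by the discussion following the definition of $CF(\Sigma)$, the Fourier series of $x$ necessarily converges to $x$ in operator norm.

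Next, suppose $G$ has subexponential H-growth with respect to $L$. Then, again by (the subexponential part of) \cite[Theorem 3.15]{BeCo2}, for every $t>0$ the group $G$ is $\kappa_t$-decaying for $\kappa_t(g)=\exp(tL(g)/2)$. Given $x\in C^*_r(\Sigma)$ with $\sum_{g\in G}\|\widehat{x}(g)\|^2 \exp(tL(g))<\infty$ for some $t>0$, we have $\widehat{x}\in \ell^2_{\kappa_t}(G,A)$, and another application of Proposition \ref{k-decay} shows that $x\in CF(\Sigma)$, so its Fourier series converges to $x$ in operator norm.

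I do not expect any serious obstacle: the corollary is essentially a bookkeeping exercise transferring the scalar-valued statements of \cite[Theorem 3.15]{BeCo2} (which encode the nontrivial combinatorial input about growth of $G$) through the operator-valued machinery set up in Proposition \ref{k-decay}. The only delicate point is making sure that the exponents match, i.e.\ that the weight $\kappa$ controlling the decay of $G$ has $\kappa^2$ equal to the weight appearing in the hypothesis on $\widehat{x}$; this is the reason for the factors $s/2$ and $t/2$ in the choice of $\kappa$.
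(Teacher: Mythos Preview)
Your approach is correct and essentially identical to the paper's: reduce to Proposition \ref{k-decay} by invoking the $\kappa$-decay results from \cite{BeCo2}. The only adjustment needed is the precise reference: the paper cites \cite[Theorem 3.13, parts 1) and 2)]{BeCo2} for the $\kappa$-decay of $G$ under polynomial (resp.\ subexponential) H-growth, whereas \cite[Theorem 3.15]{BeCo2} is the scalar analogue of the corollary itself; your hedge ``or the preparatory lemmas leading to it'' is exactly right, and your explicit handling of the exponents ($s/2$ and $t/2$) is in fact more careful than the paper's own proof, which silently relabels $s$.
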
 

\begin{proof} Assume that $G$ has polynomial H-growth (w.r.t.\ $L$). By \cite[Theorem 3.13, part 1)]{BeCo2}, we know that $G$ is $(1+L)^{s}$-decaying for some $s >0$. The first assertion is then a direct consequence of Proposition \ref{k-decay}.
If $G$ has subexponential H-growth (w.r.t.\ $L$), the proof is similar, except that we now use \cite[Theorem 3.13, part 2)]{BeCo2}.

\end{proof}

In some aspects (see for example Proposition \ref{CK} and its following remark), 
the spaces $\ell^2_\kappa(G,A)$, as subspaces of $A^\Sigma$, seem to be too small when $A$ is non-trivial, and one should instead consider the subspaces
\begin{align*}
A_{\kappa}^\Sigma &= \Big\{\xi: G \to A \ | \ 
\sum_{g \in G} \alpha_g^{-1}(\xi(g^*)\xi(g)) \kappa(g)^2
\ \mbox{is norm-convergent in $A$}\Big\}\, \\
&= \big\{\xi: G \to A \ | \ \xi\, \kappa \in A^\Sigma\, \big\}\,
\end{align*}
equipped with the norm
$\|\xi\|_{\alpha,\kappa} = \| \xi \kappa\|_\alpha$. In the case where $A$ is commutative, $\sigma$ is scalar-valued and $\kappa$ is of the form $\kappa=(1+L)^m$ for some proper length function $L$ on $G$ and $m\in \Naturali$, such subspaces have previously been considered in \cite{JiSc, CW}. 
Note that we have
$$
\begin{array}{ccccc}
\ell^1(G,A) & \subset & \ell^2(G,A) & \subset & A^\Sigma  \\
& & \cup & & \cup \\
& & \ell^2_\kappa(G,A) & \subset & A_{\kappa}^\Sigma 
\end{array}
$$
Moreover, if $\Sigma$ has the $A_{\kappa}^\Sigma$-decay property,
then $\Sigma$ also has the $\ell^2_\kappa(G,A)$-decay property (as  
$ \|f\|_{\alpha,\kappa} \leq  \|f\|_{2,\kappa}, 
\, f \in C_c(\Sigma)$).

We will see in Corollary \ref{comdecay} that $\Sigma$ has the  $A_{\kappa}^\Sigma$-decay property whenever $A$ is commutative, $\alpha$ is trivial and $G$ is $\kappa$-decaying.
We expect that the $A_{\kappa}^\Sigma$-decay property  will also hold in some
cases where the action is not trivial, but leave this open for future investigations.
We only mention that it might be useful to consider the notion of {\it $\Sigma$-content}, defined for a  finite nonempty  subset $E$ of $G$ by
$$C_\Sigma(E) = \sup\big\{ \|\Lambda_\Sigma(f)\| \ | \ 
f \in C_c(\Sigma), \, {\rm supp}(f) \subseteq E, \  \|f\|_\alpha = 1 \big\} \ . $$
To see that $C_\Sigma(E)$ is finite, consider $f \in C_c(\Sigma)$  satisfying ${\rm supp}(f) \subseteq E$ and $\|f\|_\alpha = 1$.
As $\|f(g)\| \leq \|f\|_\alpha=1$ for all $g\in G$, we have
$$\|\Lambda_\Sigma(f)\| 
\leq \sum_{g\in G} \, \|f(g)\| \leq |E| \, \|f\|_\alpha\, = |E| .$$
Hence, $C_\Sigma(E) \leq |E| < \infty$. One can also check that if $F$ is another finite nonempty subset of $G$, then we have $C_\Sigma(E) \leq C_\Sigma(F)$ whenever $E\subseteq F$, and  $C_\Sigma(E\cup F) \leq C_\Sigma(E) + C_\Sigma(F)$ whenever
$E$ and $F$ are disjoint. 

When $A=\Complessi$, then it is not difficult to see that $C_\Sigma(E) \leq c(E) \leq |E|^{1/2}$, where  $c(E)$ denotes the Haagerup content of $E$ (as defined in \cite{BeCo2}).  
To proceed further, one will need to develop techniques to obtain more precise estimates for $C_\Sigma(E)$  in the general case.

\section{Multipliers}\label{Multipliers}

We will let $MA(G)$ (resp. $M_0A(G)$) denote the space of multipliers (resp. cb-multipliers) on $G$, as defined  for example in \cite{DCHa, Pis}, and considered in  \cite{BeCo2}.
We recall that $$B(G) \subset UB(G) \subset M_0A(G) \subset MA(G)$$ 
where $B(G)$ denotes the Fourier-Stieltjes algebra of $G$
and $UB(G)$ denotes the algebra consisting of the
matrix coefficients of  uniformly bounded representations of $G$. (It is also well known that all these spaces coincide whenever $G$ is amenable.)
Our aim in this section is to introduce some similar spaces for $\Sigma$. 

 \medskip In \cite{BeCo3} we introduced the notion of so-called {rf-multipliers} of $\Sigma$, giving rise to certain bounded maps from the reduced to the full crossed product. 
A related concept is as follows.

\medskip 
Let $T: G\times A \to A$ be a map which is linear in the second variable. For each $g$, we let $T_g: A \to A$ be the linear map obtained by setting 
$$T_g(a) = T(g,a)\, \quad a \in A\,.$$
Moreover, for each $f \in C_c(\Sigma)$, we define $T\cdot f \in C_c(\Sigma)$ by $$\big(T\cdot f\big)(g) = T_g(f(g))\, , \quad g\in G\,.$$
Then we say that $T$ is a (reduced) {\it multiplier} of $\Sigma$ whenever there exists a (necessarily unique) bounded linear map
$M_T: (C_r^*(\Sigma), \|\cdot\|) \to (C_r^*(\Sigma), \|\cdot\|)$ satisfying
$$ M_T\, \Lambda_\Sigma\, (f) = \Lambda_\Sigma(T\cdot f)\,, $$
that is, 
$$M_T\Big( \sum_{g \in G}\, f(g)\, \lambda_\Sigma(g)\Big) = \sum_{g \in G}\, T_g(f(g))\, \lambda_\Sigma(g)\, ,$$
for all $ f \in C_c(\Sigma)$.
Note that $M_T$ is then uniquely determined by 
$$M_T\big(a \, \lambda_\Sigma(g)\big) = T_g(a) \, \lambda_\Sigma(g)\, , \quad a \in A, \, g \in G\,.$$
Hence we have $\|T_g(a) \| \leq \|M_T\| \|a\|$ for every  $g \in G$ and $ a\in A$, and it follows that 
each $T_g$ is bounded with $\|T_g\|\leq \|M_T\|$. Especially, 
 the family $\{T_g\}_{g \in G}$ is necessarily (uniformly) bounded. 

We also note that for each $x \in C_r^*(\Sigma)$ we have
\begin{equation}\label{mult-Fourier1}
\widehat{M_T(x)}(g) = T_g\big(\widehat{x}(g)\big)\, , \quad g\in G\,.
\end{equation} 
Indeed, this is easily seen to be true when $x\in \Lambda_\Sigma(C_c(\Sigma))$, and the assertion then follows from a density argument. Conversely, if there exists a bounded linear map $M_T$ from $C_r^*(\Sigma)$ into itself such that (\ref{mult-Fourier1}) holds for every $x$ in $C_r^*(\Sigma)$, then it follows readily that $T$ is a multiplier of $\Sigma$. Thus this might be taken as an alternative definition. 

\bigskip 
We will let $MA(\Sigma)$ denote the set of all (reduced) multipliers on $\Sigma$. 
This set, that always contains the trivial multiplier $I_\Sigma$ (defined by $I_\Sigma(g,a)=a$ for all $g\in G, a\in A$),
has an obvious vector space structure, and can be equipped with the norm given by $$\tn T \tn=\|M_T\|\, .$$ We will also consider its subspace $M_0A(\Sigma)$ consisting of (reduced) {\it cb-multipliers}, that is, multipliers on $\Sigma$ satisfying that $M_T$ is completely bounded \cite{Pau, Pis}, i.e. $\|M_T\|_{cb} < \infty$.

\bigskip The simplest  conceivable kind of multipliers on $\Sigma$
are those arising from  scalar-valued functions on the group. Consider $\varphi : G\to \Complessi$ and  let $T^\varphi: G\times A \to A$ be defined by $$T^\varphi(g, a) = \varphi(g) \, a \,  , \quad a \in A, \, g \in G\,.$$
It is not difficult to see that if $\, T^\varphi \in MA(\Sigma)$ (resp. $T^\varphi \in M_0A(\Sigma)$), then $\varphi \in MA(G)$ (resp. $\varphi \in M_0A(G)$).
It is not clear to us that $\, T^\varphi \in MA(\Sigma)$ whenever $\varphi \in MA(G)$. 
However, as we will soon deduce from a more general result, we do have $T^\varphi \in M_0A(\Sigma)$ whenever $\varphi \in M_0A(G)$. 
As a warm-up, we will first show that
  $T^\varphi \in M_0A(\Sigma)$ whenever 
  $\varphi \in UB(G)$.

\medskip The following lemma,  in the vein of Fell's classical absorption principle, is similar to \cite[Lemma 2.1]{DCHa}.

\begin{lemma}\label{fell-classical}
Let $v$ be a  uniformly bounded representation  of $G$ in the group of bounded invertible linear operators on some Hilbert space $\K$. 

Then there exists an invertible adjointable operator
$V$ on the Hilbert $A$-module $A^\Sigma \otimes \K$ satisfying
\begin{equation} \label{fell-eq}
V (a \lambda_\Sigma(g) \otimes I_\K) V^{-1}
= a \lambda_\Sigma(g) \otimes v(g) \, , \quad a \in A, \, g \in G\,. 
\end{equation}
\end{lemma}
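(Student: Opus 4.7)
The plan is to adapt Fell's classical absorption construction to the Hilbert C$^*$-module setting. First I would identify $A^\Sigma \otimes \K$ with the space of functions $\xi \colon G \to A \otimes \K$ such that $\sum_{h\in G} (\alpha_h^{-1}\otimes \mathrm{id}_\K)\,\langle \xi(h), \xi(h)\rangle$ converges in $A$, where the $A$-valued inner product on $A\otimes \K$ extends $\langle a\otimes\eta,\, b\otimes\zeta\rangle = a^*b\,\langle\eta,\zeta\rangle_\K$. Under this identification, the operator $a\lambda_\Sigma(g)\otimes I_\K$ acts pointwise by
$$\bigl((a\lambda_\Sigma(g)\otimes I_\K)\xi\bigr)(h) = (a\otimes I_\K)\,(\alpha_g\otimes\mathrm{id}_\K)\bigl(\xi(g^{-1}h)\bigr)\,(\sigma(g,g^{-1}h)\otimes I_\K).$$

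Next I would define $V$ by the pointwise formula $(V\xi)(h) = (I_A\otimes v(h))\,\xi(h)$ and verify its properties step by step. Set $C = \sup_{g\in G}\|v(g)\|$, which is finite by hypothesis, and note that $\sup_{g}\|v(g)^{-1}\| = C$ too since $v(g^{-1})=v(g)^{-1}$. The operator inequality $v(h)^*v(h)\leq C^2\, I_\K$ in $\mathcal{B}(\K)$ gives $\langle (V\xi)(h),(V\xi)(h)\rangle \leq C^2\,\langle \xi(h),\xi(h)\rangle$ in $A^+$ for every $h$; summing and applying $\alpha_h^{-1}$ then yields $\|V\xi\|\leq C\,\|\xi\|$, so $V$ is well-defined and bounded on $A^\Sigma\otimes\K$. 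Adjointability follows from the analogous candidate $(V^*\xi)(h) = (I_A\otimes v(h)^*)\,\xi(h)$, which one checks on simple tensors via $\langle v(h)\eta,\zeta\rangle_\K = \langle\eta, v(h)^*\zeta\rangle_\K$. The same recipe with $v(h)^{-1}$ in place of $v(h)$ provides a bounded two-sided inverse.

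The intertwining identity (\ref{fell-eq}) then reduces to a direct pointwise computation. Since $I_A\otimes v(h)$ and $I_A\otimes v(g^{-1}h)^{-1}$ commute with each of the tensor factors $a\otimes I_\K$, $(\alpha_g\otimes\mathrm{id}_\K)(\cdot)$, and $\sigma(g,g^{-1}h)\otimes I_\K$, the whole identity collapses to $v(h)\,v(g^{-1}h)^{-1} = v(g)$, which is just the homomorphism property of $v$. The step I expect to require most care is the bookkeeping around the identification of $A^\Sigma\otimes\K$ with the appropriate space of $(A\otimes\K)$-valued functions; this identification must interact correctly with the twisted action, and the pointwise formulas must be seen to extend by continuity from finitely supported functions to the whole module. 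Once this setup is in place, the rest of the argument amounts to the routine verifications above.
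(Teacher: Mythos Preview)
Your proof is correct and follows essentially the same approach as the paper: both define $V$ by the pointwise formula $(V\xi)(h)=(I_A\otimes v(h))\,\xi(h)$ and reduce the intertwining relation to the group identity $v(h)v(g^{-1}h)^{-1}=v(g)$. The only difference is cosmetic: the paper carries out the construction on $A^G\otimes\K\cong (A\otimes\K)^G$ (where the inner product is untwisted, so the identification is standard) and then transports everything to $A^\Sigma\otimes\K$ via the unitary $J\otimes I_\K$, whereas you work directly on $A^\Sigma\otimes\K$ with the twisted function-space description; this is exactly the bookkeeping you flagged as the most delicate step, and the paper's detour through $A^G$ is simply a way of avoiding it.
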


\begin{proof}  
Let $W$ be the  invertible
adjointable operator on
$A^G \otimes \K \cong (A \otimes \K)^G$ given by
$$(W \zeta)(g) = ({\rm id} \otimes v(g)) \zeta(g) , 
\ \zeta \in (A \otimes \K)^G \,, \,  g \in G \ . $$
 Then, for every vector of the form $b \otimes \delta_h \otimes \eta\in 
A^G \otimes \K \cong A \otimes \ell^2(G) \otimes \K$,
we have
\begin{align*}
W (\tilde\ell(a) \tilde{\lambda}_\ell(g) \otimes I_\K)
(b \otimes \delta_h \otimes \eta) & 
= W \big( \alpha^{-1}_{gh}(a\, \sigma(g,h)) b \otimes \delta_{gh} \otimes \eta\big) \\
& =  \alpha^{-1}_{gh}(a \, \sigma(g,h))b \otimes \delta_{gh} \otimes v(gh)\eta \\
& =  \alpha^{-1}_{gh}(a \,\sigma(g,h))b \otimes \delta_{gh} \otimes v(g)v(h)\eta \\
& = \big(\tilde\ell(a) \tilde{\lambda}_\ell(g) \otimes v(g)\big) ( b \otimes \delta_h \otimes v(h)\eta) \\
& = \big(\tilde\ell(a) \tilde{\lambda}_\ell(g) \otimes v(g)\big) W (b \otimes \delta_h \otimes \eta) \, ,
\end{align*}
 By a density argument, we get
 \begin{equation*} 
W (\tilde\ell(a) \tilde{\lambda}_\ell(g) \otimes I_\K) W^{-1}
= \tilde\ell(a) \tilde{\lambda}_\ell(g) \otimes v(g) \, , \quad a \in A, \, g \in G\,. 
\end{equation*}
We can now define $V$ on $ A^\Sigma \otimes \K $ by $V= (J\otimes I_\K) W (J^*\otimes I_\K)$
and the identity (\ref{fell-eq}) readily follows.

\end{proof}

Note that when $v$ is a unitary representation of $G$ on $\K$, the operator $V$ in Lemma \ref{fell-classical} is unitary. Especially, choosing $v = \lambda$ (the left regular representation of $G$ on $\ell^2(G)$), we get an injective homomorphism $\delta_\Sigma: C_r^*(\Sigma) \to C_r^*(\Sigma) \otimes C_r^*(G)$, called the {\it (reduced) dual coaction} of $G$ on $C^*_r(\Sigma)$, by setting 
$$\delta_\Sigma (x) = V(x\otimes I) V^*\, , \quad x \in C_r^*(\Sigma)\,. $$
The reader may for instance consult the appendix of \cite{EKQR} for a survey of the vast area of coactions on C$^*$-algebras and their crossed products. We won't need this theory in this paper, but we will make a couple of remarks involving $\delta_\Sigma$ in Section 6.

\bigskip The next proposition generalizes  \cite[Theorem 2.2]{DCHa}. 

\begin{proposition}\label{mpd}
Let $v$ be a  uniformly bounded representation  of $G$ into the bounded invertible operators on some Hilbert space $\K$ and set $K = \sup \big\{ \| v(g) \| \, ; \, g \in G\big\} < \infty$.  

\noindent Let $\eta_{1}, \eta_{2} \in \K$ and define $\varphi \in UB(G)$ by 
$$\varphi(g) = \big\langle \eta_{1} \, , v(g)\, \eta_{2}\big\rangle \, , \quad  g \in G\, .$$

\noindent Then $T^\varphi \in M_0A(\Sigma)$. Moreover, setting $M_\varphi= M_{T^\varphi}: C^*_{r}(\Sigma)\to C^*_{r}(\Sigma)$, we have
$$M_\varphi \big(a\lambda_\Sigma(g)\big) = \varphi(g) \, a\, \lambda_\Sigma(g)\, , \quad a \in A\, , \, g \in G\, ,$$
and 
$$\tn T^\varphi \tn= \| M_\varphi \| \leq \|M_\varphi\|_{cb} \leq K^2 \,  \|\eta_{1}\| \|\eta_{2}\|\,. $$

\end{proposition}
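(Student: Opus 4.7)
The plan is to follow the Haagerup-type argument used classically by De Cannière--Haagerup, adapting it via the module version of Fell's absorption principle established in Lemma \ref{fell-classical}. So let $V \in \L(A^\Sigma \otimes \K)$ be the invertible operator provided by that lemma, and observe from its construction that, since $v$ is a group homomorphism with $\|v(g)\|,\|v(g)^{-1}\|=\|v(g)\|,\|v(g^{-1})\| \leq K$, one has $\|V\|, \|V^{-1}\| \leq K$.

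Next I would introduce for each $\eta\in \K$ the ``creation'' operator $R_\eta : A^\Sigma \to A^\Sigma \otimes \K$ given by $R_\eta(\xi)= \xi\otimes \eta$, which is adjointable with $\|R_\eta\|= \|\eta\|$ and $R_\eta^{\,*}(\xi\otimes \kappa) = \langle \eta,\kappa\rangle \,\xi$. I would then define $M_\varphi : \L(A^\Sigma)\to \L(A^\Sigma)$ by
$$M_\varphi(x) \,=\, R_{\eta_1}^{\,*}\,V\,(x\otimes I_\K)\,V^{-1}\,R_{\eta_2}\,.$$
Using the identity (\ref{fell-eq}) of Lemma \ref{fell-classical}, I would compute, for $a\in A$ and $g\in G$,
$$M_\varphi\big(a\,\lambda_\Sigma(g)\big) = R_{\eta_1}^{\,*}\big(a\,\lambda_\Sigma(g)\otimes v(g)\big)R_{\eta_2} = \big\langle \eta_1, v(g)\eta_2\big\rangle\, a\,\lambda_\Sigma(g) = \varphi(g)\,a\,\lambda_\Sigma(g)\, ,$$
which is exactly what the formula for $M_{T^\varphi}$ should give.

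By linearity, $M_\varphi$ sends $\Lambda_\Sigma(C_c(\Sigma))$ into itself and acts on this dense subspace as $M_\varphi(\Lambda_\Sigma(f))=\Lambda_\Sigma(T^\varphi\cdot f)$. Since $M_\varphi$ is bounded by construction and $C^*_r(\Sigma)$ is closed in $\L(A^\Sigma)$, it follows that $M_\varphi$ restricts to a bounded linear map from $C^*_r(\Sigma)$ into itself satisfying the defining identity of a multiplier for $T^\varphi$. Hence $T^\varphi \in MA(\Sigma)$ and $M_{T^\varphi}=M_\varphi$. For the cb-estimate, I would observe that $x \mapsto x\otimes I_\K$ is a $*$-homomorphism (hence completely contractive), that conjugation by $V$ has cb-norm at most $\|V\|\,\|V^{-1}\|\leq K^2$, and that left/right multiplication by $R_{\eta_1}^{\,*}$ and $R_{\eta_2}$ contributes factors $\|\eta_1\|$ and $\|\eta_2\|$, respectively, yielding
$$\|M_\varphi\|_{cb}\;\leq\; \|R_{\eta_1}^{\,*}\|\;K^{2}\;\|R_{\eta_2}\|\;=\; K^{2}\,\|\eta_1\|\,\|\eta_2\|\,,$$
so $T^\varphi\in M_0 A(\Sigma)$ with the asserted norm bound, and of course $\|M_\varphi\|\leq \|M_\varphi\|_{cb}$.

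The only step requiring some care is controlling the norm of $V$ (and of $V^{-1}$): one must use that $v$ is a \emph{uniformly bounded} group representation, so that both $\sup_g\|v(g)\|$ and $\sup_g\|v(g)^{-1}\|=\sup_g\|v(g^{-1})\|$ are bounded by $K$; this is what gives the clean $K^2$ factor in the final estimate. Everything else is a routine assembly of the pieces, very much in the spirit of Lemma 2.1--Theorem 2.2 of \cite{DCHa}, but with $A^\Sigma$ playing the role of $\ell^2(G)$.
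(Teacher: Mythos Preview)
Your proof is correct and follows essentially the same approach as the paper's: both use the operator $V$ from Lemma~\ref{fell-classical}, introduce the creation operator $\xi\mapsto \xi\otimes\eta$ (denoted $\theta_\eta$ in the paper, $R_\eta$ in your write-up), define $M_\varphi(x)=R_{\eta_1}^*V(x\otimes I_\K)V^{-1}R_{\eta_2}$, and read off both the multiplier identity and the cb-estimate. Your explicit remark that $\|V\|,\|V^{-1}\|\leq K$ (which the paper uses without comment) is a welcome clarification.
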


\begin{proof}
  Let $V$ be the operator  on 
$A^\Sigma \otimes \K$ obtained in Lemma \ref{fell-classical}.

\smallskip \noindent For $\eta \in \K$,  let  
 $\theta_{\eta} : A^\Sigma \to A^\Sigma \otimes \K$ be the adjointable operator given by
$$\theta_{\eta}(\xi) = \xi \otimes \eta \ , \xi \in A^\Sigma\, ,$$
its adjoint being determined by  $\theta_{\eta}^*\, (\xi' \otimes \eta') = \langle \eta,\eta' \rangle \,  \xi', \, \xi' \in A^\Sigma, \, \eta' \in \K$.

\medskip Now define $M_{\varphi}: \L(A^\Sigma)\to \L(A^\Sigma)$ by
$$M_{\varphi}(x)=\theta_{\eta_{1}}^* \, V (x \otimes I_{\K})V^{-1}\theta_{\eta_{2}} \, .$$
Then $M_{\varphi}$ is completely bounded (see \cite{Pau}),
with $$\|M_\varphi\|_{cb}\leq \|\theta_{\eta_{1}}^* \, V\|\,  \| V^{-1}\theta_{\eta_{2}}\|\leq 
K^2 \, \|\eta_{1}\| \, \|\eta_{2}\|\, .$$
Let $\xi \in \A^\Sigma$. Using Lemma \ref{fell-classical}, we get
\begin{align*}
M_{\varphi}\big(a \lambda_\Sigma(g) \big) \, \xi 
& =  \theta_{\eta_{1}}^* V \big(a\lambda_\Sigma(g) \otimes I_\K\big) V^{-1} \theta_{\eta_{2}} \,
\xi \\
& =  \theta_{\eta_{1}}^* (a \lambda_\Sigma(g) \otimes v(g)) \theta_{\eta_{2}} \, \xi \\
& = \theta_{\eta_{1}}^* \big(a \lambda_\Sigma(g) \xi \otimes v(g) \eta_{2}\big) \\
& = \big\langle \eta_{1},v(g)\eta_{2}\big\rangle \, a \lambda_\Sigma(g)\,  \xi \\
& = \varphi(g) \,  a \lambda_\Sigma(g)\,  \xi 
\end{align*}
This shows that $T^\varphi \in M_0A(\Sigma)$, with $M_{T^\varphi} = M_\varphi$, and the final assertion follows from this.

\end{proof}

When $\sigma$ is trivial, the following corollary is due to Haagerup (see \cite[Lemma 3.5]{Ha}).

\begin{corollary}\label{posdef}
Let $\varphi$ 
be a positive definite function on 
$G$. 

\smallskip \noindent Then $T^\varphi \in M_0A(\Sigma)$, $ M_{T^\varphi}: C^*_{r}(\Sigma)\to C^*_{r}(\Sigma)$ is 
completely positive,
and $$\tn T^\varphi \tn =\|M_{T^\varphi}\| = \|M_{T^\varphi}\|_{cb} =  \varphi(e)\,.$$
\end{corollary}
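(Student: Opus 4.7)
The plan is to combine a GNS-type representation of $\varphi$ with the construction in the proof of Proposition \ref{mpd}, noting that unitarity produces a completely positive map. By the standard GNS construction for positive definite functions on a group, there exist a unitary representation $u$ of $G$ on some Hilbert space $\K$ and a cyclic vector $\eta \in \K$ such that $\varphi(g) = \langle \eta, u(g)\eta\rangle$ for all $g \in G$, with $\|\eta\|^2 = \langle \eta, u(e)\eta\rangle = \varphi(e)$.

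Applying Proposition \ref{mpd} with $v = u$ and $\eta_1 = \eta_2 = \eta$ immediately gives $T^\varphi \in M_0A(\Sigma)$ together with the bound
$$\tn T^\varphi \tn = \|M_{T^\varphi}\| \leq \|M_{T^\varphi}\|_{cb} \leq \|\eta\|^2 = \varphi(e)\,.$$
Since $u$ is unitary, $K = 1$ in the statement of Proposition \ref{mpd}; moreover, as already observed in the paper just after Lemma \ref{fell-classical}, the associated operator $V$ on $A^\Sigma \otimes \K$ is unitary, so $V^{-1} = V^*$.

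Revisiting the formula for $M_{T^\varphi}$ in the proof of Proposition \ref{mpd}, we may therefore write
$$M_{T^\varphi}(x) = \theta_\eta^* \, V (x \otimes I_\K) V^* \theta_\eta = (V^* \theta_\eta)^* \,(x \otimes I_\K)\, (V^* \theta_\eta)\,, \quad x \in C_r^*(\Sigma)\,.$$
This exhibits $M_{T^\varphi}$ as the compression by a single adjointable operator of the amplification $x \mapsto x \otimes I_\K$ (which is itself a homomorphism, hence completely positive). Consequently $M_{T^\varphi}$ is completely positive.

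For the remaining norm identities, we use the standard fact that a completely positive map $\Phi$ between unital C$^*$-algebras satisfies $\|\Phi\|_{cb} = \|\Phi\| = \|\Phi(1)\|$. Taking $a = 1$ and $g = e$ in the identity $M_{T^\varphi}(a\lambda_\Sigma(g)) = \varphi(g)\, a\,\lambda_\Sigma(g)$ from Proposition \ref{mpd} yields $M_{T^\varphi}(1) = \varphi(e)\cdot 1$, so $\|M_{T^\varphi}(1)\| = \varphi(e)$, and all the inequalities in the chain $\varphi(e) = \|M_{T^\varphi}\| \leq \|M_{T^\varphi}\|_{cb} \leq \varphi(e)$ collapse to equalities. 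The only genuinely delicate point is the initial GNS-type factorization of $\varphi$, which however is classical; once that is in place everything else is a direct bookkeeping within the proof of Proposition \ref{mpd}.
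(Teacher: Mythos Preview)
Your proof is correct and follows essentially the same approach as the paper: the paper's proof simply says that, writing $\varphi(g)=\langle \eta, v(g)\eta\rangle$ for a unitary representation $v$, the result ``follows readily from Proposition \ref{mpd} and its proof.'' You have unpacked exactly this, making explicit the complete positivity (via the compression formula $M_{T^\varphi}(x)=(V^*\theta_\eta)^*(x\otimes I_\K)(V^*\theta_\eta)$ coming from the unitarity of $V$) and the norm computation (via $M_{T^\varphi}(1)=\varphi(e)\cdot 1$ and the standard identity $\|\Phi\|_{cb}=\|\Phi(1)\|$ for completely positive maps).
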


\begin{proof}
Since we can write $\varphi$  
as $\varphi(g) = \langle \eta \, , v(g)\, \eta \rangle \,,  g \in G\, ,$ for a suitable unitary representation $v$ of $G$ 
on some Hilbert space $\H$, the result follows readily from Proposition \ref{mpd} and its proof.

\end{proof}

\bigskip More generally, given a map $\varphi: G\to A$, we may consider the maps $L^\varphi$ and $R^\varphi$ from  $G\times A$ into $A$ given by $$L^{\varphi}(g, a) = \varphi(g) \, a\, , \quad \quad \,R^{\varphi}(g, a) = a \, \varphi(g)\, ,   \quad g \in G, \, a \in  A \, .$$ 
Inspired by the known characterizations of $M_0A(G)$ (see e.g. \cite{BF1, BF2, Jo, Pis}), we will give in Theorem  \ref{CB-mult} some conditions ensuring that these maps belong to $M_0A(\Sigma)$. For a different result about $MA(\Sigma)$, see  Proposition \ref{CK}.

\medskip We will 
use the following lemma:

\begin{lemma} \label{Linf} Let $X$ be a Hilbert $A$-module and $\zeta \in \ell^\infty(G,X)$. Define $V_\zeta: A^G \to X^G$ by $$\big[V_\zeta\, \xi\big](g) =  \zeta(g) \cdot \xi(g)\,, \quad \xi \in A^G, \, g \in G\,.$$
Then $V_\zeta \in \L(A^G, X^G), \,  \|V_\zeta\| \leq \|\zeta\|_\infty$ and $V_\zeta^{\,*}$ is given by $V_\zeta^{\, *} = W_\zeta:X^G\to A^G$, where
$$\big[W_\zeta\,\eta\big](g) = \big\langle \zeta(g), \eta(g)\big\rangle\, , \quad \eta \in X^G, \, g \in G\, .$$

\end{lemma}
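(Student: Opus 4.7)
The plan is to check in turn (a) that $V_\zeta \xi$ actually lies in $X^G$ with the claimed norm bound, (b) that $W_\zeta \eta$ lies in $A^G$, and (c) that $\langle V_\zeta \xi, \eta\rangle = \langle \xi, W_\zeta \eta\rangle$ for all $\xi \in A^G, \eta \in X^G$, which simultaneously shows that $V_\zeta$ is adjointable and identifies its adjoint.

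For (a), I would use the standard identity in Hilbert C$^*$-modules $\langle x\cdot a, x\cdot a\rangle = a^*\langle x,x\rangle\, a$ together with the inequality $\langle x,x\rangle \leq \|x\|^2 \, 1_A$ to get, for each $g\in G$,
$$\big\langle \zeta(g)\cdot\xi(g), \zeta(g)\cdot\xi(g)\big\rangle = \xi(g)^*\, \langle \zeta(g),\zeta(g)\rangle\, \xi(g) \leq \|\zeta\|_\infty^{\,2}\; \xi(g)^*\xi(g)\,.$$
Since $\xi \in A^G$, the series $\sum_g \xi(g)^*\xi(g)$ is norm-convergent in $A$, so the domination principle from the Preliminaries immediately yields norm-convergence of $\sum_g \langle [V_\zeta\xi](g),[V_\zeta\xi](g)\rangle$, i.e., $V_\zeta\xi \in X^G$, together with $\|V_\zeta\xi\|^2 \leq \|\zeta\|_\infty^{\,2}\, \|\xi\|^2$. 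This establishes the linearity, well-definedness and norm bound $\|V_\zeta\|\leq \|\zeta\|_\infty$ (modulo boundedness, which will follow once adjointability is known, or can be read off directly from the preceding estimate).

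For (b), I would apply the Cauchy--Schwarz inequality in the Hilbert $A$-module $X$, namely $\langle y, x\rangle \langle x,y\rangle \leq \|x\|^2\,\langle y,y\rangle$, to obtain
$$[W_\zeta\eta](g)^*\, [W_\zeta\eta](g) = \langle \eta(g),\zeta(g)\rangle\,\langle \zeta(g),\eta(g)\rangle \leq \|\zeta\|_\infty^{\,2}\;\langle \eta(g),\eta(g)\rangle\,.$$
Since $\eta \in X^G$, the series $\sum_g \langle \eta(g),\eta(g)\rangle$ is norm-convergent, and another application of the domination principle shows that $W_\zeta\eta \in A^G$ with $\|W_\zeta\eta\| \leq \|\zeta\|_\infty\, \|\eta\|$.

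Finally, for (c), I would compute directly, using that inner products commute with norm-convergent sums and that $\langle x\cdot a, y\rangle = a^*\langle x,y\rangle$ in any Hilbert C$^*$-module:
$$\langle V_\zeta \xi, \eta\rangle = \sum_{g\in G} \big\langle \zeta(g)\cdot\xi(g),\, \eta(g)\big\rangle = \sum_{g\in G} \xi(g)^* \,\langle \zeta(g),\eta(g)\rangle = \sum_{g\in G}\xi(g)^*\,[W_\zeta\eta](g) = \langle \xi, W_\zeta\eta\rangle\,.$$
This identifies $V_\zeta^* = W_\zeta$ and in particular shows $V_\zeta$ is adjointable. Nothing here is deep; the only slightly delicate point is to invoke the domination principle for series of positive elements to pass from pointwise inequalities to convergence in norm, but this fact was already recorded in the Preliminaries and can be used as a black box.
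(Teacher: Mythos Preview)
Your proposal is correct and follows essentially the same approach as the paper's proof: the same pointwise inequalities $\langle \zeta(g)\cdot\xi(g),\zeta(g)\cdot\xi(g)\rangle \leq \|\zeta\|_\infty^2\,\xi(g)^*\xi(g)$ and $\langle \eta(g),\zeta(g)\rangle\langle \zeta(g),\eta(g)\rangle \leq \|\zeta\|_\infty^2\,\langle \eta(g),\eta(g)\rangle$, the same appeal to the domination principle for convergence, and the same direct computation of $\langle V_\zeta\xi,\eta\rangle = \langle \xi, W_\zeta\eta\rangle$. The only difference is cosmetic organization into parts (a)--(c).
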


\begin{proof}  Let $\xi \in A^G$ and define $\gamma: G \to X$ by 
$$\gamma(g) =  \zeta(g) \cdot \xi(g)\,, \quad 
g \in G\,.$$
Then, for each $g \in G$, we have $$ \big\langle \gamma(g), \gamma(g)\big\rangle = \xi(g)^* \big\langle \zeta(g), \zeta(g)\big\rangle\,  \xi(g) \leq  \| \zeta(g) \|^2 \, \xi(g)^* \xi(g) \leq \| \zeta \|_\infty^2 \,  \xi(g)^* \xi(g)  \,.$$
It follows  that $\gamma \in X^G$ and 
$$\|\gamma\|^2 = \big\| \, \sum_{g \in G} \big\langle \gamma(g), \gamma(g)\big\rangle\, \big\| \leq  \| \zeta \|_\infty^2 \,\,  \big\| \sum_{g\in G} \xi(g)^* \xi(g)\, \big\| \, .$$ This shows that $V_\zeta$ is well-defined and 
$\|V_\zeta\, \xi \| \leq  \| \zeta \|_\infty \, \| \xi\|$.

\medskip Similarly, let $\eta \in X^G$ and define $\delta: G \to A$ by
$$\delta(g) = \big\langle \zeta(g), \eta(g)\big\rangle\, , \quad \, g \in G\, .$$
Then, for each $g \in G$, we have
$$\delta(g)^*\delta(g) = \big\langle \zeta(g), \eta(g)\big\rangle^*\, \big\langle \zeta(g), \eta(g)\big\rangle \leq \|\zeta(g)\|^2 \, \big\langle\eta(g), \eta(g)\big\rangle \leq \|\zeta\|_\infty^2 \, \big\langle\eta(g), \eta(g)\big\rangle\,.$$
It follows  that $\delta \in A^G$. This shows that $W_\zeta: X^G\to A^G$ is well-defined. 

\medskip Now, let $\xi \in A^G, \eta \in X^G$. Then we have
\begin{align*}
\big\langle V_\zeta \,\xi, \eta\big\rangle & = \sum_{g \in G} \big\langle (V_\zeta\xi)(g), \eta(g)\big\rangle =\sum_{g \in G} \big\langle \zeta(g) \cdot \xi(g), \eta(g)\big\rangle \\
& =\sum_{g \in G}  \xi(g)^* \big\langle \zeta(g), \eta(g)\big\rangle =\sum_{g \in G}  \xi(g)^* (W_\zeta \eta)(g) = \big\langle \xi, W_\zeta \,\eta\big\rangle\, .
\end{align*}
Hence, $V_\zeta$ is adjointable with $V_\zeta^{\, *} = W_\zeta$, as desired.

\end{proof}

\begin{theorem}\label{CB-mult}
Let $\varphi : G \to A$ be a map and $L^\varphi, \, R^\varphi: G\times A \to A$ be defined as above.  Let $\pi$ be a representation of $A$ on some Hilbert $A$-module $X$ and $\eta_1, \eta_2 \in \ell^\infty(G, X)$.

\medskip 
\noindent l) Assume that
\begin{equation}\label{l1}
\pi(a)\, \eta_2(t)= \eta_2(t) \cdot a\, \quad\quad \quad  \text{for all} \, \, a\in A, \, t \in G\, ,
\end{equation}
\begin{equation}\label{l2}
\varphi(st^{-1}) = \alpha_s\big(\big\langle \eta_1(s), \eta_2(t)\big\rangle\big)\, \quad \text{for all} \, \, s, t \in G\, .
\end{equation}

\medskip \noindent Then $L^\varphi \in M_0A(\Sigma)$, and $\, \tn L^\varphi\tn \leq \|M_{L^\varphi}\|_{cb} \leq  \|\eta_1\|_\infty \, \|\eta_2\|_\infty$. 

\medskip \noindent If $\eta_1 = \eta_2$, then $M_{L^\varphi}$ is completely positive and $\tn L^\varphi\tn = \|M_{L^\varphi}\|_{cb} = \|\varphi(e)\|$. 
  
\bigskip \noindent  r) Assume that
\begin{equation}\label{r1}
\pi(a)\, \eta_1(t)= \eta_1(t) \cdot a\, \quad \quad \quad \quad \quad  \text{for all} \, \, a\in A, \, t \in G\, ,
\end{equation}
\begin{equation}\label{r2}
\varphi(st^{-1}) = {\rm Ad} (\sigma(s, st^{-1})) \, \alpha_s\big(\big\langle \eta_1(s), \eta_2(t)\big\rangle\big)\, \quad \text{for all} \, \, s, t \in G\, .
\end{equation}

\medskip \noindent Then $R^\varphi \in M_0A(\Sigma)$, and $\, \tn R^\varphi\tn \leq \|M_{R^\varphi}\|_{cb} \leq  \|\eta_1\|_\infty \, \|\eta_2\|_\infty$. 

\medskip \noindent If $\eta_1 = \eta_2$, then $M_{R^\varphi}$ is completely positive and $\tn R^\varphi\tn = \|M_{R^\varphi}\|_{cb} = \|\varphi(e)\|$.

\end{theorem}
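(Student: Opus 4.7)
The strategy is a module-valued upgrade of the proof of Proposition \ref{mpd}: instead of sandwiching a Fell-type absorption between two fixed Hilbert-space vectors, I will sandwich a suitable $*$-representation of $C^*_r(\Sigma)$ between two adjointable maps of Hilbert $A$-modules, now built from $\eta_1$ and $\eta_2$. First I would produce a $*$-homomorphism $\Xi\colon C^*_r(\Sigma)\to \L(X^G)$ coming from the regular covariant representation $(\tilde\pi, \tilde\lambda_\pi)$ of $\Sigma$ on $X^G$. Although $\pi$ is not assumed to be faithful, a standard direct-sum trick (applying the earlier fact $C_r^*(\Sigma) \simeq (\tilde\pi'\times\tilde\lambda_{\pi'})(C^*(\Sigma))$ to $\pi'=\pi\oplus \pi_0$ for some faithful $\pi_0$, and splitting $(X\oplus X_0)^G = X^G\oplus X_0^G$) shows that $\|(\tilde\pi\times\tilde\lambda_\pi)(f)\|\leq \|\Lambda_\Sigma(f)\|$ on $C_c(\Sigma)$, so $\tilde\pi\times\tilde\lambda_\pi$ descends to the desired contractive $*$-homomorphism $\Xi$. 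Next, for $\eta \in \ell^\infty(G,X)$, I would introduce $\Theta_\eta\colon A^\Sigma\to X^G$ defined by $(\Theta_\eta\xi)(h) = \eta(h)\cdot \alpha_h^{-1}(\xi(h))$. Conjugating Lemma \ref{Linf} by the unitary $J$ (or repeating its inner-product calculation) gives $\Theta_\eta \in \L(A^\Sigma, X^G)$, with $\|\Theta_\eta\|\leq \|\eta\|_\infty$ and adjoint $(\Theta_\eta^*\zeta)(h)= \alpha_h(\langle \eta(h),\zeta(h)\rangle)$.

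For part l), set $M(x):= \Theta_{\eta_1}^*\,\Xi(x)\,\Theta_{\eta_2}\in \L(A^\Sigma)$. As a sandwich of a $*$-homomorphism, $M$ is automatically completely bounded with $\|M\|_{cb}\leq \|\eta_1\|_\infty\,\|\eta_2\|_\infty$, so it remains to verify on generators that $M(a\lambda_\Sigma(g)) = \varphi(g)\,a\,\lambda_\Sigma(g)$. Unfolding the definitions yields, on $\xi\in A^\Sigma$ at $h\in G$, the expression $\pi(\alpha_h^{-1}(a\sigma(g,g^{-1}h)))\,\eta_2(g^{-1}h)\,\alpha_{g^{-1}h}^{-1}(\xi(g^{-1}h))$; hypothesis (\ref{l1}) pulls $\eta_2(g^{-1}h)$ through $\pi$, and applying $\Theta_{\eta_1}^*$ then factors out $\alpha_h(\langle \eta_1(h), \eta_2(g^{-1}h)\rangle)$, which hypothesis (\ref{l2}) identifies (with $s=h$, $t=g^{-1}h$) as $\varphi(g)$. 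The cocycle identity yields $\sigma(g,g^{-1}h)\,\alpha_h\alpha_{g^{-1}h}^{-1}(b) = \alpha_g(b)\,\sigma(g,g^{-1}h)$ for every $b\in A$, which rearranges the remaining factors into $a\alpha_g(\xi(g^{-1}h))\sigma(g,g^{-1}h) = (a\lambda_\Sigma(g)\xi)(h)$. A density argument (since $M$ and $M_{L^\varphi}$ agree on $\Lambda_\Sigma(C_c(\Sigma))$) then gives $M = M_{L^\varphi}$, so $L^\varphi \in M_0A(\Sigma)$ with the claimed bound. In the case $\eta_1=\eta_2=\eta$, $M=\Theta_\eta^*\Xi(\cdot)\Theta_\eta$ is automatically completely positive, and $M(1)=\Theta_\eta^*\Theta_\eta$ acts on $A^\Sigma$ as left multiplication by $\alpha_h(\langle \eta(h),\eta(h)\rangle) = \varphi(e)$ (by (\ref{l2}) with $s=t=h$), so $\tn L^\varphi\tn = \|M\|_{cb} = \|M\| = \|\varphi(e)\|$.

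Part r) is handled by the same device, but with two differences: condition (\ref{r1}) now makes $\eta_1$ (rather than $\eta_2$) the ``bimodule-central'' factor that can be pulled through $\pi$, and the twist $\mathrm{Ad}(\sigma(s, st^{-1}))$ appearing in (\ref{r2}) forces a compensating modification of the intertwining operators so that the output of the sandwich comes out in the form $a\varphi(g)\lambda_\Sigma(g)$ rather than $\varphi(g)a\lambda_\Sigma(g)$. I expect this to be the main obstacle: merely swapping the roles of $\Theta_{\eta_1}$ and $\Theta_{\eta_2}$ leaves $\varphi(g)$ stuck on the wrong side of $a$, so one must absorb the cocycle twist into the $\Theta$-operators (or, equivalently, into the auxiliary representation on $X^G$) in such a way that the interior $A$-valued inner product collapses to $\varphi(g)$ in the correct position relative to $a$. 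Once this twist is in place, the bookkeeping with the cocycle identity (analogous to the $\sigma(g,g^{-1}h)\alpha_h\alpha_{g^{-1}h}^{-1}$ identity used in part l)) runs in parallel, and the cb-bound, the identification with $M_{R^\varphi}$, and the complete positivity together with the norm computation in the case $\eta_1=\eta_2$ all follow exactly as before.
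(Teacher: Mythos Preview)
Your approach is essentially the paper's own: the paper uses the operators $V_{\eta_j}\in\L(A^G,X^G)$ from Lemma~\ref{Linf}, forms the sandwich $M(S)=V_{\eta_1}^*SV_{\eta_2}$, composes with the $*$-homomorphism $\psi:C_r^*(\Sigma)\to\L(X^G)$ obtained from weak containment of $\tilde\pi\times\tilde\lambda_\pi$ in $\Lambda$, and finally conjugates by $J$. Your $\Theta_\eta$ is precisely $V_\eta\,J^*$, so your $M=\Theta_{\eta_1}^*\,\Xi(\cdot)\,\Theta_{\eta_2}$ and the paper's $\tilde M=(\mathrm{Ad}\,J)\circ M\circ\psi$ are the same map, and your computation for part l) matches line by line.

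The one place where your sketch diverges is part r): you anticipate having to twist the $\Theta$-operators, but in fact no modification is needed. The paper uses the \emph{same} sandwich $V_{\eta_1}^*\,(\cdot)\,V_{\eta_2}$ and simply moves $\pi(\alpha_h^{-1}(a\sigma(g,g^{-1}h)))$ to the left slot of the inner product via its adjoint, using (\ref{r1}) in the form $\big\langle\pi(b)^*\eta_1(h),\eta_2(g^{-1}h)\big\rangle=b\,\big\langle\eta_1(h),\eta_2(g^{-1}h)\big\rangle$. This immediately places $a$ to the left of the inner product, and the $\mathrm{Ad}(\sigma)$-twist in (\ref{r2}) is exactly what is required to cancel the $\sigma(g,g^{-1}h)$ that was carried along inside $\pi$. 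So part r) is no harder than part l) once you see this; there is no genuine obstacle.
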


\begin{proof}  
We use the notation of Lemma \ref{Linf} and set $V_j=V_{\eta_j} \in \L(A^G, X^G), \, j=1,2$. 

\medskip \noindent We may then define a completely bounded linear map $M: \L(X^G) \to \L(A^G)$  by $$M(S) = V_1^* S\, V_2 \, , \quad S \in \L(X^G)\, ,$$
and Lemma \ref{Linf} gives $\|M\|_{cb} \leq \|V_1\| \, \|V_2\| \leq \|\eta_1\|_\infty \, \|\eta_2\|_\infty\, .$ Clearly,  $M$ is completely positive if $\eta_1 = \eta_2$.

\medskip \noindent Consider $a \in A, \, g \in G, \, \xi \in A^G$. Then, for each $h \in G$, we have
$$ \big[M\big( \tilde{\pi}(a) \, \tilde{\lambda}_\pi(g)\big)\, \xi\big](h)= \big[V_1^* \, \tilde{\pi}(a) \, \tilde{\lambda}_\pi(g)  \,V_2\, \xi\big](h) =  \Big\langle \eta_1(h)\, , \,  \big(\tilde{\pi}(a) \, \tilde{\lambda}_\pi(g)V_2 \,\xi\big)(h) \Big\rangle\, .
$$
We also note that
\begin{align*}
\big(\tilde{\pi}(a) \, \tilde{\lambda}_\pi(g)V_2\, \xi\big)(h) & = \pi\big( \alpha_h^{-1}(a) \alpha_h^{-1}(\sigma(g, g^{-1}h)\big) (V_2\, \xi)(g^{-1}h) \\
& = \pi\big( \alpha_h^{-1}(a\, \sigma(g, g^{-1}h))\big) 
\big(\eta_2(g^{-1}h) \cdot \xi(g^{-1}h)\big)\, \\
& =  \Big(\pi\big( \alpha_h^{-1}(a\, \sigma(g, g^{-1}h))\big)\, 
\eta_2(g^{-1}h)\Big)\cdot \xi(g^{-1}h)
\end{align*}
where we have used 
$A$-linearity at the last step. Hence, we get
\begin{equation}\label{eq} \big[M\big( \tilde{\pi}(a) \, \tilde{\lambda}_\pi(g)\big)\, \xi\big](h) =  \Big\langle \eta_1(h)\, , \,
\pi\big( \alpha_h^{-1}(a\, \sigma(g, g^{-1}h))\big) 
\, \eta_2(g^{-1}h)\Big\rangle \, \xi(g^{-1}h)\,.
\end{equation}
We divide the rest of the proof in two cases.

\medskip \noindent $i)$ Assume that the assumptions in $l)$ are satisfied. Assumption (\ref{l1}) gives
$$\pi\big( \alpha_h^{-1}(a\, \sigma(g, g^{-1}h))\big) 
\, \eta_2(g^{-1}h)
= \eta_2(g^{-1}h) \cdot \alpha_h^{-1}\big(a\, \sigma(g, g^{-1}h)\big)\,,$$
 so we get
\begin{align*}
 \big[M\big( \tilde{\pi}(a) \, \tilde{\lambda}_\pi(g)\big)\,\xi\big](h) & =
  \Big\langle \eta_1(h)\, , \, \eta_2(g^{-1}h) \cdot \alpha_h^{-1}\big(a\, \sigma(g, g^{-1}h)\big)\Big\rangle\,   \xi(g^{-1}h)  
 \\
 & =   \Big\langle \eta_1(h)\, , \, \eta_2(g^{-1}h) \Big\rangle \, \, \alpha_h^{-1}\big(a\, \sigma(g, g^{-1}h)\big)\, \xi(g^{-1}h)\, .
\end{align*}

\medskip \noindent As assumption (\ref{l2}) gives $\,\,  \varphi(g) = \varphi\big(h (g^{-1}h)^{-1}\big) = \alpha_h\big(\langle \eta_1(h), \eta_2(g^{-1}h)\rangle\big)$,\\ we have 
$$\alpha_h^{-1}(\varphi(g)) = \big\langle \eta_1(h), \eta_2(g^{-1}h)\big\rangle\, .$$
Hence, we get
\begin{align*}
\big[M \big(\tilde{\pi}(a) \, \tilde{\lambda}_\pi(g)\big)\, \xi\big](h) & =
\alpha_h^{-1}(\varphi(g))\, \, \alpha_h^{-1}\big(a\, \sigma(g, g^{-1}h)\big)\, \xi(g^{-1}h) \\
& =\alpha_h^{-1}\big(\varphi(g) \, a\, \sigma(g, g^{-1}h)\big)\, \xi(g^{-1}h) \\ 
& = \big[\, \tilde{\ell}\big(\varphi(g) \, a\big) \, \tilde{\lambda}_\ell(g)\,  \xi\, \big] (h)\,. 
\end{align*}

\medskip  \noindent By linearity, we get $\, M\,(\tilde{\pi} \times \tilde{\lambda}_\pi)(f) = \Lambda\big(L^\varphi\cdot f\big)\, $ for all $f \in C_c(\Sigma)$. As $M$ is bounded, it follows that $M$ maps
$(\tilde{\pi} \times \tilde{\lambda}_\pi)\big(C^*(\Sigma))$ into $\Lambda(C^*(\Sigma))\subset \L(A^G)$.

\medskip \noindent As $\tilde{\pi} \times \tilde{\lambda}_\pi$ is weakly contained in $\Lambda$ (cf.\ \cite[p. 188]{BeCo3}), and $\Lambda_\Sigma$ is unitarily equivalent to $\Lambda$ (via $J$), there exists a homomorphism $\psi: C_r^*(\Sigma) \to \L(X^G)$ such that $\psi \, \Lambda_\Sigma = \tilde{\pi} \times \tilde{\lambda}_\pi$. 

\medskip Let now $\tilde{M}: C_r^*(\Sigma) \to C_r^*(\Sigma)$ be given by
$\tilde{M} = ({\rm Ad}\, J)\, M\, \psi$. Since $M$ is completely bounded, $\tilde{M}$ is also completely bounded. Moreover, we have
$$\tilde{M}\,\Lambda_\Sigma(f) = ({\rm Ad}\, J)\, M\, \psi \,\Lambda_\Sigma(f) = ({\rm Ad}\, J)\, M\, (\tilde{\pi} \times \tilde{\lambda}_\pi)(f)$$
$$= ({\rm Ad}\, J)\,\Lambda\big(L^\varphi\cdot f\big) = \Lambda_\Sigma \big(L^\varphi\cdot f\big)$$
for all $f \in C_c(\Sigma)$. This shows that $L^\varphi \in M_0A(\Sigma)$ with $M_{L^\varphi} = \tilde{M}$. Moreover,  we get $$\tn L^\varphi\tn = \|M_{L^\varphi}\| \leq \|\tilde{M}\|_{cb} \leq \|M\|_{cb} \leq \|\eta_1\|_\infty \, \|\eta_2\|_\infty\,.$$
If $\eta_1=\eta_2$, then $M_{L^\varphi}= \tilde{M}$ is a composition of completely positive maps and therefore itself completely positive. As $M_{L^\varphi}(1) = \varphi(e)$, the last assertion of $l)$ readily follows. 

\medskip \noindent
$ii)$ Assume now that the assumptions in $r)$ are satisfied. Equation (\ref{eq}) gives
$$ \big[M\big( \tilde{\pi}(a) \, \tilde{\lambda}_\pi(g)\big)\, \xi\big](h) =  \Big\langle \pi\big( \alpha_h^{-1}(a\, \sigma(g, g^{-1}h))\big)^* \eta_1(h)\, , \,
  \eta_2(g^{-1}h)\Big\rangle \, \xi(g^{-1}h)\,.$$
Using assumption (\ref{r1}), it follows that
\begin{align*}
\big[M\big( \tilde{\pi}(a) \, \tilde{\lambda}_\pi(g)\big)\, \xi\big](h) 
& =  \Big\langle   \eta_1(h)\cdot\alpha_h^{-1}(a\, \sigma(g, g^{-1}h))^*\, , \,
  \eta_2(g^{-1}h)\Big\rangle \, \xi(g^{-1}h)\, \\
& = \alpha_h^{-1}(a\, \sigma(g, g^{-1}h))\, \Big\langle   \eta_1(h)\, , \,
  \eta_2(g^{-1}h)\Big\rangle \, \xi(g^{-1}h)\, .
\end{align*}
  Now, assumption (\ref{r2}) gives 
  $$\varphi(g) = \varphi\big(h(gh^{-1})^{-1}\big) = \sigma(g, g^{-1}h) \, \alpha_h\big(\langle \eta_1(h), \eta_2(g^{-1}h)\rangle\big)\, \sigma(g, g^{-1}h)^*\,,$$
  hence
  $$ \big\langle \eta_1(h), \eta_2(g^{-1}h)\big\rangle= \alpha_h^{-1}\big(\sigma(g, g^{-1}h)^*\, \varphi(g) \sigma(g, g^{-1}h)\big)\, ,$$
  and we obtain
$$ \big[M\big( \tilde{\pi}(a) \, \tilde{\lambda}_\pi(g)\big)\, \xi\big](h) =
\alpha_h^{-1}(a\, \sigma(g, g^{-1}h))\,   \alpha_h^{-1}\big(\sigma(g, g^{-1}h)^*\, \varphi(g) \sigma(g, g^{-1}h)\big)\, \xi(g^{-1}h)\, $$
$$= \alpha_h^{-1}\big(a\, \varphi(g)\big)\, \alpha_h^{-1}\big( \sigma(g, g^{-1}h)\big)\, \xi(g^{-1}h)\, 
 = \big[\, \tilde{\ell}\big(a\, \varphi(g) \big) \, \tilde{\lambda}_\ell(g)\,  \xi\, \big] (h)\,. 
$$

\medskip  \noindent By linearity, we get $\, M\,(\tilde{\pi} \times \tilde{\lambda}_\pi)(f) = \Lambda\big(R^\varphi\cdot f\big)\, $ for all $f \in C_c(\Sigma)$. Clearly, we can now proceed as in the previous case to finish the proof.

\end{proof}

\medskip 
\begin{example} \label{equiv-funct}
 Let $(\rho,v)$ be an equivariant representation of 
$\Sigma$ on a Hilbert module $X$ and $x,\, y \in X$.  Assume that $x$ or $y$  lies in the central part $Z_X$ 
of $X$. Let $\varphi:G\to A$ be given by 
$$\varphi(g) = \big\langle x,  v(g)\, y \big\rangle\, , \quad g \in G\, , $$ 

\smallskip \noindent and consider the associated map $L^\varphi$ if $y\in Z_X$ (resp. $R^\varphi$ if $x \in Z_X$). 

\medskip Then an elementary computation, using that $Z_X$ is left invariant by each $v(g)$ (see \cite{BeCo3}), gives that $\varphi$ satisfies the assumptions in part $l)$ (resp.\ part $r)$) of Theorem \ref{CB-mult} with $$\xi_1(s) = {\rm ad}_\rho\big(\sigma(s, s^{-1})\big)\, v(s^{-1})\, x\, , \quad \xi_2(t)= v(t^{-1})\,y\, , \quad \text{ when} \,\,  \, y \in Z_X \, $$ 
(resp. 
$$\xi_1(s) = v(s^{-1})\,x\, ,\quad   \xi_2(t)=  {\rm ad}_\rho\big(\sigma(t^{-1}, t)^*\big)\, v(s^{-1})\,x\, ,   \quad \text{ when} \, \, \, x \in Z_X\, ).$$ 

Hence, we get  that  $L^\varphi$ (resp. $R^\varphi$) is a cb-multiplier of $\Sigma$ when $y \in Z_X$ (resp. $x \in Z_X)$. We will in fact give an alternative approach  in Example \ref{funct-ex}.

\hfill$\square$
\end{example}

Theorem \ref{CB-mult} is well known when $A =\Complessi$ and $\sigma = 1$. Indeed, consider $\varphi: G \to \Complessi$. As shown in  \cite[Theorems 5.1 and 6.4]{Pis} (see also \cite{Jo}, \cite{BF1}), $\varphi \in M_0A(G)$ if and only if there
there exist a Hilbert space $\K$ and $\xi_1, \xi_2 \in \ell^\infty(G, \K)$ such that
\begin{equation}\label{gilbert} \varphi(st^{-1}) = \big\langle \xi_1(s), \xi_2(t)\big\rangle\, \quad \text{for all} \, \, s, t \in G\,.
\end{equation}
Moreover, in this case, letting $M_\varphi: C_r^*(G) \to C_r^*(G)$ denote the associated completely bounded map, we have $\|M_\varphi\|_{cb} = \inf \, \|\xi_1\|_\infty \,  \|\xi_2\|_\infty$, where the infimum is taken over all possible pairs $\xi_1, \xi_2$ satisfying the above conditions for some Hilbert space $\K$.

\medskip Using this characterization of $M_0A(G)$, we get:

\begin{corollary} \label{cb-mult} Let $\varphi \in M_0A(G)$. Then $T^\varphi \in M_0A(\Sigma)$ and $\tn T^\varphi\tn \leq \|M_{T^\varphi}\|_{cb} \leq \|M_{\varphi}\|_{cb}$. 

\end{corollary}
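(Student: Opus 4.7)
The plan is to reduce the corollary to Theorem \ref{CB-mult}$(l)$ by promoting a scalar Gilbert factorization of $\varphi$ to a suitable pair of $A$-valued vectors satisfying the centrality condition (\ref{l1}) and the cocycle-like identity (\ref{l2}). Observe first that $T^\varphi$ is exactly $L^{\tilde\varphi}$ with $\tilde\varphi:G\to A$ given by $\tilde\varphi(g) = \varphi(g)\,1_A$, since $\varphi(g)\,a = (\varphi(g)\,1_A)\,a$ for all $g\in G, a\in A$. So it suffices to produce data $(X,\pi,\eta_1,\eta_2)$ for $\tilde\varphi$ verifying the hypotheses of part $l)$.

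Given $\varphi \in M_0A(G)$ and $\varepsilon > 0$, by the Gilbert characterization quoted just before the corollary (equation (\ref{gilbert})) there exist a Hilbert space $\K$ and $\xi_1,\xi_2 \in \ell^\infty(G,\K)$ with
$$\varphi(st^{-1}) = \big\langle \xi_1(s),\xi_2(t)\big\rangle\, , \qquad s,t\in G,$$
and $\|\xi_1\|_\infty \, \|\xi_2\|_\infty \leq \|M_\varphi\|_{cb} + \varepsilon$. Take $X = \K \otimes A$, the external tensor product viewed as a Hilbert $A$-module, and let $\pi: A\to \L(X)$ be the right-multiplication representation $\pi(a) = I_\K \otimes \ell(a)$. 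Define $\eta_j \in \ell^\infty(G,X)$ by $\eta_j(g) = \xi_j(g) \otimes 1_A$. Note that $\|\eta_j\|_\infty = \|\xi_j\|_\infty$.

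I then just need to check the two conditions of part $l)$. For (\ref{l1}), $\pi(a)\eta_2(t) = \xi_2(t)\otimes a = (\xi_2(t)\otimes 1_A)\cdot a = \eta_2(t)\cdot a$, as required. For (\ref{l2}), the $A$-valued inner product is
$$\langle \eta_1(s),\eta_2(t)\rangle = \langle \xi_1(s),\xi_2(t)\rangle_\K \, 1_A^* 1_A = \varphi(st^{-1})\, 1_A,$$
and since $\alpha_s$ is unital and $\Complessi$-linear, $\alpha_s(\varphi(st^{-1})\, 1_A) = \varphi(st^{-1})\, 1_A = \tilde\varphi(st^{-1})$. Theorem \ref{CB-mult}$(l)$ therefore yields $T^\varphi = L^{\tilde\varphi} \in M_0A(\Sigma)$ with
$$\tn T^\varphi \tn \leq \|M_{T^\varphi}\|_{cb} \leq \|\eta_1\|_\infty \|\eta_2\|_\infty = \|\xi_1\|_\infty \|\xi_2\|_\infty \leq \|M_\varphi\|_{cb} + \varepsilon\, .$$
Letting $\varepsilon \to 0$ gives the claimed inequality. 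The only mildly delicate point is the verification that the vectors $\eta_j$ take values in the central part of $X$ in the sense of (\ref{l1}); everything else amounts to straightforward unpacking of the external tensor product structure and the fact that $\alpha_s$ fixes $\Complessi\cdot 1_A$.
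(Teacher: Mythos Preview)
Your proof is correct and follows essentially the same approach as the paper: promote a Gilbert factorization of $\varphi$ to $A$-valued data on the Hilbert $A$-module $\K\otimes A$ (the paper uses $A\otimes\K$, with the obvious adjustments) and apply Theorem~\ref{CB-mult}$(l)$; your $\varepsilon$-argument is equivalent to the paper's passage to the infimum over all Gilbert factorizations. One minor remark: condition~(\ref{l1}) only concerns $\eta_2$, not both $\eta_j$, so your closing comment is slightly off, though this does not affect the argument.
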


\begin{proof} Define $\varphi_A:G\to A$ by $\varphi_A(s) = \varphi(s)\cdot 1$. Then pick  a Hilbert space $\K$ and  $\xi_1, \xi_2 \in \ell^\infty(G, \K)$ such that (\ref{gilbert}) holds. 

\medskip Consider the Hilbert $A$-module $X=A\otimes\K$ and the canonical representation $\pi$ of $A$ on $X$ (determined by $\pi(a)(b\otimes\zeta) = ab \otimes \zeta$).
Moreover, define $\eta_1, \eta_2 \in \ell^\infty(G, X)$ by $\eta_j(s) = 1 \otimes \xi_j(s)\, ,\,j=1,2$.

\medskip\noindent Then, trivially, $\eta_2$ satisfies the assumption (\ref{l1}) in part $l)$  of Theorem \ref{CB-mult}. Further,  for all $s, t \in G$, we have 
$$\alpha_s\big(\langle(\eta_1(s), \eta_2(t)\rangle\big)= \alpha_s\big(\langle 1, 1\rangle\, \langle \xi_1(s), \xi_2(t)\rangle\big) = \langle \xi_1(s), \xi_2(t)\rangle\cdot 1 = \varphi_A(st^{-1})\, .
$$
Hence, the assumption (\ref{l2}) in $l)$ is also satisfied (with $\varphi_A$)  and we may apply Theorem \ref{CB-mult}. We get $\, T^\varphi = L^{\varphi_A} \in M_0A(\Sigma)$, and
$$\tn T^\varphi \tn  \leq \|M_{T^\varphi}\|_{cb} =  \|M_{L^{\varphi_A}}\|_{cb} \leq \|\eta_1\|_\infty \, \|\eta_2\|_\infty = \|\xi_1\|_\infty \, \|\xi_2\|_\infty\, .$$ 
As this holds for any choice of $\xi_1, \xi_2$ satisfying (\ref{gilbert}), we get $\tn T^\varphi \tn  \leq \|M_{T^\varphi}\|_{cb} \leq \|M_\varphi \|_{cb}$, as asserted.

\end{proof}

We will now show how one may produce cb-multipliers of $\Sigma$ associated with equivariant representations of $\Sigma$, in a more general way than the one outlined in Example \ref{equiv-funct}. 
When $A$ is trivial, the basic ingredient in an equivariant representation consists of a unitary representation of the group on some Hilbert space, and the associated multipliers are then just given by Proposition \ref{mpd}. 
In the general case, our procedure is technically much more involved  and requires some preparations. We first state the result.

\begin{theorem}\label{Phi-thm} 
Let  $(\rho,v)$ be an equivariant 
representation of $\Sigma$ on a Hilbert $A$-module $X$ and let $x, y \in X$.  Define $T:G\times A \to A$ by
$$T(g,a) = \big\langle x\, , \, \rho(a) \, v(g) \, y \big \rangle \, , 
\quad g\in G, \, a\in A\, .$$

\noindent Then $T \in M_0A(\Sigma)$ and $$\tn T\tn = \| M_T \| \leq \| M_T \|_{\rm cb}  \leq \|x\| \, \|y\|\,.$$
Hence,  
$
M_T: C_r^*(\Sigma) \to C_r^*(\Sigma)
$
 satisfies 
\begin{equation}\label{MT}
M_T \big( a\, \lambda_{\Sigma}(g)\big) 
= \big\langle x\, ,\, {\rho}(a)\, {v}(g)\, y \big\rangle\, \lambda_\Sigma(g)
\end{equation}
for each $\, a \in A, \, g \in G\,.$ 

\medskip \noindent If $x=y$, then $M_T$ is completely positive and $\tn T\tn = \| M_T \| = \| M_T \|_{\rm cb}= \|x\|^2$.
\end{theorem}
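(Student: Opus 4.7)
The plan is to generalize the construction used in the proof of Proposition \ref{mpd} to the setting of equivariant representations on Hilbert $A$-modules. In Proposition \ref{mpd} the uniformly bounded representation of $G$ lived on a Hilbert space $\K$ and the Fell-type absorption of Lemma \ref{fell-classical} was used to produce the cb-multiplier by compressing an amplification with vectors $\eta_1, \eta_2 \in \K$. Here $\K$ must be replaced by $X$ and the unitary representation by $(\rho, v)$; the natural amplification is the internal tensor product $Z := X \otimes_{\ell_\Sigma} A^\Sigma$ along the representation $\ell_\Sigma : A \to \L(A^\Sigma)$, on which the pair $(\rho \dot\otimes \ell_\Sigma\, , \, v \dot\otimes \lambda_\Sigma)$ is a covariant representation of $\Sigma$ in the sense of \cite[Section 4]{BeCo3}.

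The central step, which I would establish as a separate lemma, is a version of Fell's absorption principle suited to the present setting: the integrated form of $(\rho \dot\otimes \ell_\Sigma\, , \, v \dot\otimes \lambda_\Sigma)$ factors through $C_r^*(\Sigma)$ to yield a homomorphism $\widehat\Phi : C_r^*(\Sigma) \to \L(Z)$ with
$$\widehat\Phi(a\, \lambda_\Sigma(g)) = (\rho \dot\otimes \ell_\Sigma)(a)\, (v \dot\otimes \lambda_\Sigma)(g)\, , \quad a \in A,\, g \in G\, .$$
The natural approach is to exhibit an invertible adjointable operator $V \in \L(Z)$ that intertwines this covariant representation with a ``decoupled'' one in which the equivariant data $(\rho, v)$ no longer mix with $(\ell_\Sigma, \lambda_\Sigma)$, so that the decoupled version manifestly factors through $C_r^*(\Sigma)$.

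Granted this, for each $\xi \in X$ the ``creation'' map $\theta_\xi : A^\Sigma \to Z$ given by $\theta_\xi(\eta) = \xi \otimes \eta$ is adjointable, with $\theta_\xi^*(\zeta \otimes \eta) = \langle \xi, \zeta\rangle \cdot \eta$ and $\|\theta_\xi\| \leq \|\xi\|$. Setting
$$M_T(z) := \theta_x^* \, \widehat\Phi(z) \, \theta_y\, , \quad z \in C_r^*(\Sigma)\, ,$$
a direct computation on the generators $a\, \lambda_\Sigma(g)$ using the product formulas for $(\rho \dot\otimes \ell_\Sigma)(a)$ and $(v \dot\otimes \lambda_\Sigma)(g)$ yields exactly the identity (\ref{MT}); in particular $M_T$ takes values in $C_r^*(\Sigma)$, so that $T \in M_0A(\Sigma)$ with associated map $M_T$. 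Being a compression of a homomorphism, $M_T$ is completely bounded with $\|M_T\|_{cb} \leq \|\theta_x\|\, \|\theta_y\| \leq \|x\|\, \|y\|$. When $x = y$ the map $M_T$ is manifestly completely positive; combined with $M_T(1) = \theta_x^*\, \theta_x = \langle x, x\rangle$ and the standard identity $\|M\| = \|M\|_{cb} = \|M(1)\|$ for completely positive maps between unital C*-algebras, this gives the last assertion.

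The main obstacle will be the Fell absorption step. In the scalar case of Lemma \ref{fell-classical} the intertwiner was an explicit operator on $A^G \otimes \K$, diagonal in $G$ and built from the values $v(g)$. In the module setting $v(g)$ is only a bounded invertible $\mathbb{C}$-linear map (not in general adjointable), and the cocycle $\sigma$ must be carried through the computation; one therefore expects to work with invertible adjointables rather than unitaries, and to exploit the relations $\rho(\alpha_g(a)) = v(g)\rho(a)v(g)^{-1}$ and $v(g)v(h) = {\rm ad}_\rho(\sigma(g,h))\, v(gh)$ in a careful way.
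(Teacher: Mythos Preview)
Your proposal is correct and follows essentially the same route as the paper: the Fell absorption step you describe is precisely Theorem~\ref{fell} (with the explicit intertwiner constructed in Lemma~\ref{W}), and the compression $M_T = \theta_x^*\,\widehat\Phi(\,\cdot\,)\,\theta_y$ is exactly how the paper builds the multiplier. One small clarification: the intertwiner is not an operator on $Z$ but a \emph{unitary} $W : X \otimes_{\ell_\Sigma} A^\Sigma \to X^G$ onto the regular module, where the target representation is $(\tilde\rho, \tilde\lambda_\rho)$; equivariance property~(iii) is precisely what makes $W$ isometric despite $v(g)$ itself not being adjointable, so your worry about having to settle for invertible adjointables does not materialize.
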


We define $B(\Sigma) $ to be the set of all multipliers of $\Sigma$ obtained as in Theorem \ref{Phi-thm}, thinking of it as the set of  $A$-valued matrix coefficients associated with equivariant representations of $\Sigma$.
 Theorem \ref{Phi-thm} then says that $$B(\Sigma) \subset M_0A(\Sigma)\,.$$ 
Note that 
$I_\Sigma \in B(\Sigma)$ (taking $X=A, (\rho, v)= (\ell, \alpha), \xi=\eta=1$).
Moreover, $M_0A(\Sigma)$ (and $MA(\Sigma)$) can be endowed with an algebra structure,  $B(\Sigma)$ and $UB(G)$ may be seen as unital subalgebras of $M_0A(\Sigma)$, while $B(G)$ may be seen as a unital subalgebra of both $B(\Sigma)$
and $UB(G)$. 
To do this, we would especially need to discuss the notion of tensor product of equivariant representations of $\Sigma$. As this is somewhat lengthy and would take us away from our main focus in this paper, we will not elaborate on this any further here.

\medskip Our proof of Theorem \ref{Phi-thm} will rely on a new version of 
the Fell's absorption principle. It uses the machinery developed in \cite[Section 4]{BeCo3}, where  another
analogue of Fell's principle was established \cite[Theorem 4.11]{BeCo3}.  

\medskip We start with a lemma. Note that if $X$ is a Hilbert $A$-module, $\ell_\Sigma$ being a representation of $A$ on the Hilbert $A$-module $A^\Sigma$, 
 we may form
 the internal tensor product $X \otimes_{\ell_\Sigma} A^\Sigma$, which is also a Hilbert $A$-module (see \cite[Chapter 4]{La1}).

\begin{lemma} \label{W} Let  $(\rho,v)$ be an equivariant 
representation of $\Sigma$ on a Hilbert $A$-module $X$.

\medskip \noindent There exists a unitary operator  
$W \in \L(X \otimes_{\ell_\Sigma} A^\Sigma\, , \, X^G)$ which satisfies
\begin{equation}\label{eqW}
\big[W\, (x \dot\otimes \xi)\big] (g) = v(g)^{-1}\, \big(x \cdot \xi(g)\big)
 \end{equation}
for all $\  x \in X\, , \, \xi \in A^\Sigma \,, \, g \in G\,.$ 
\end{lemma}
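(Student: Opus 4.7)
The plan is to define $W$ on elementary tensors by the prescribed formula, show that it is bilinear and $\ell_\Sigma$-balanced so that it descends to the algebraic internal tensor product, verify inner-product preservation, and finally argue surjectivity by exhibiting a dense subspace in the range.

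The main technical input is property (iii) of equivariant representations, which rewritten with $v(g)^{-1}$ in place of $v(g)$ reads $\langle v(g)^{-1}u, v(g)^{-1}u' \rangle = \alpha_g^{-1}(\langle u, u' \rangle)$ for all $u, u' \in X$ and $g \in G$. Setting $\Phi(x,\xi)(g) = v(g)^{-1}(x \cdot \xi(g))$, this immediately yields $\langle \Phi(x,\xi)(g), \Phi(x,\xi)(g)\rangle = \alpha_g^{-1}(\xi(g)^*\, \langle x, x\rangle \,\xi(g))$, a quantity dominated by $\|x\|^2 \,\alpha_g^{-1}(\xi(g)^*\xi(g))$; since $\xi \in A^\Sigma$, the comparison principle recalled in the preliminaries guarantees norm-convergence of the sum, so $\Phi(x,\xi) \in X^G$ with $\|\Phi(x,\xi)\| \leq \|x\|\,\|\xi\|_\alpha$. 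Polarizing the same identity produces
$$\langle \Phi(x,\xi), \Phi(x',\xi')\rangle_{X^G} = \sum_{g\in G} \alpha_g^{-1}(\xi(g)^*\langle x,x'\rangle \xi'(g)) = \langle \xi, \ell_\Sigma(\langle x, x'\rangle)\xi'\rangle_\alpha,$$
which is exactly the inner product of $x \dot\otimes \xi$ and $x' \dot\otimes \xi'$ in $X \otimes_{\ell_\Sigma} A^\Sigma$.

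Bilinearity of $\Phi$ is immediate, and the balance condition $\Phi(x \cdot a, \xi) = \Phi(x, \ell_\Sigma(a)\xi)$ follows from $(\ell_\Sigma(a)\xi)(g) = a\,\xi(g)$. Hence $\Phi$ descends to a well-defined linear map on the algebraic pre-Hilbert module $X \odot_{\ell_\Sigma} A^\Sigma$, and inner-product preservation lets it extend by continuity to an $A$-linear isometry $W: X \otimes_{\ell_\Sigma} A^\Sigma \to X^G$ satisfying (\ref{eqW}) (recall that $A$-linearity is automatic for any $\mathbb{C}$-linear inner-product preserving map between Hilbert C*-modules). For surjectivity, taking $\xi = 1 \odot \delta_g$ in (\ref{eqW}) yields that $W(x \dot\otimes (1 \odot \delta_g))$ is the element of $X^G$ supported at $g$ with value $v(g)^{-1} x$; since $v(g)^{-1} \in \mathcal{I}(X)$ is a bijection, every finitely supported $X$-valued function on $G$ lies in the range of $W$, and these functions are norm-dense in $X^G$ by a straightforward truncation argument based on unconditional convergence. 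An isometric map between Hilbert C*-modules with dense range has closed range, hence is surjective; adjointability with $W^* = W^{-1}$ then follows from the identity $\langle Wu, \eta\rangle = \langle Wu, W W^{-1}\eta\rangle = \langle u, W^{-1}\eta\rangle$, which completes the proof that $W$ is unitary.

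The main obstacle I anticipate is purely bookkeeping in nature: one must correctly juggle the various $\alpha_g^{-1}$'s arising simultaneously from the $A$-valued inner product on $A^\Sigma$ and from property (iii) applied to $v(g)^{-1}$. Once these are aligned, the verification is essentially one line; all remaining steps (bilinearity, the balance relation, density, and the isometry-plus-dense-range argument for surjectivity) are routine.
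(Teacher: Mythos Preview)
Your proof is correct and follows essentially the same approach as the paper: both use property (iii) of equivariant representations to verify inner-product preservation, then obtain surjectivity from the density of finitely supported functions in $X^G$. The only noteworthy difference is that you observe $A$-linearity is automatic from inner-product preservation, whereas the paper verifies it directly via property (iv); your shortcut is valid and slightly streamlines the argument.
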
 
\begin{proof} 

We first define $W$ on the dense subspace $Y$ of $X \otimes_{\ell_\Sigma} A^\Sigma$ consisting of the span of elements of the form $x \dot\otimes \xi\,,$ where $ \, x \in X\,, \,  \xi \in C_c(\Sigma)$. 
For $y = \sum_{i=1}^{n} x_i \dot\otimes \xi_i \in Y$, we set
$$(Wy)(g)= v(g)^{-1} \Big(\sum_{i=1}^{n}  x_i \cdot \xi_i(g)\Big)\, , \quad g \in G\,.$$
Then we have
\begin{align*}
\big\langle Wy\, , \, Wy\big\rangle & = 
\sum_{g\in G}   \, \sum_{i,j=1}^n \Big\langle v(g)^{-1}\big(x_i \cdot \xi_i(g)\big)\, , \, v(g)^{-1}\big(x_j \cdot\xi_j(g)\big)\Big\rangle \\
& = \sum_{g\in G}  \, \sum_{i,j=1}^n {\alpha_g}^{-1}\big(\big\langle x_i \cdot \xi_i(g)\, , \, x_j \cdot\xi_j(g)\big\rangle\big) \\
& = \sum_{g\in G}  \, \sum_{i,j=1}^n {\alpha_g}^{-1}\big( \xi_i(g)^*\, \big\langle x_i \, , \, x_j \big\rangle \, \xi_j(g)\big) \\
& =  \sum_{g\in G} \,  \sum_{i,j=1}^n {\alpha_g}^{-1}\big( \xi_i(g)^*\, \big(\ell_\Sigma\big(\big\langle x_i \, , \, x_j \big\rangle \, \xi_j\big)(g)\big) \\
& =   \sum_{i,j=1}^n  \,\sum_{g\in G}  {\alpha_g}^{-1}\big( \xi_i(g)^*\, \big(\ell_\Sigma\big(\big\langle x_i \, , \, x_j \big\rangle \, \xi_j\big)(g)\big) \\
& =  \sum_{i,j=1}^n  \big\langle \xi_i\, , \, \ell_\Sigma\big(\langle x_i \, , \, x_j \rangle \big) \, \xi_j\big\rangle
= \sum_{i,j=1}^n  \big\langle x_i \dot\otimes \xi_i \, , \, x_j \dot\otimes \xi_j \big\rangle  = \big\langle y\, , \, y\big\rangle 
\end{align*}
as interchanging the sums is allowed, the $\xi_i$'s being assumed to have finite support. 

It follows that $W$ is a well defined isometry from $Y$ into $X^G$, that satisfies equation (\ref{eqW}) by definition. It extends to an isometry, also denoted by $W$, from $X \otimes_{\ell_\Sigma} A^\Sigma$ into $X^G$. As the range of $W$ obviously contains $C_c(G,X)$, $W$ is surjective.  

Moreover, $W$ is $A$-linear: It clearly suffices to check that $W \big((x \dot\otimes \xi)\cdot a\big)=\big(W (x \dot\otimes \xi)\big) \cdot a$  for all $\, x \in X, \,\xi \in A^\Sigma, \,a\in A$. Now, for every $g\in G$, we have
\begin{align*}
\big[W \big((x \dot\otimes \xi)\cdot a\big)\big] (g) & =\big[W \big((x \dot\otimes (\xi \times a)\big)\big] (g) = v(g)^{-1}\, \big(x \cdot (\xi(g)\alpha_g(a))\big) \\
& = v(g)^{-1}\, \big((x \cdot \xi(g))\cdot \alpha_g(a)\big) =  \big(v(g)^{-1}\,(x \cdot \xi(g))\big)\cdot a \\
& =\big(\big[W (x \dot\otimes \xi)\big] (g)\big) \cdot a =  \big[\big(W (x \dot\otimes \xi)\big) \cdot a\big] (g)\,,
\end{align*}
where we 
have used property $(iv)$ of equivariant representations. This shows our assertion. 

Thus, $W$ is a bijective, $A$-linear isometry and it follows from \cite[Theorem 3.5]{La1} that $W$ is unitary. 

\end{proof}

Here is our new version of Fell's classical absorption principle.

\begin{theorem} \label{fell} 
Let $(\rho,v)$ be an equivariant representation of $\Sigma$ on a Hilbert 
$A$-module $X$.

\medskip \noindent Let $(\ell_\Sigma)_{*}:\L(X) \to \L(X \otimes_{\ell_\Sigma} A^\Sigma)$ 
 denote the  canonical homomorphism associated with $\ell_\Sigma$, 
 
 \smallskip \noindent so 
$\rho\dot\otimes\ell_\Sigma = ({\ell_\Sigma})_{*}\circ \rho : 
A  \to \L(X \otimes_{\ell_\Sigma} A^\Sigma)$.

\smallskip \noindent Then the product covariant representation 
$(\rho \dot\otimes \ell_\Sigma\, , \, v \dot\otimes \lambda_\Sigma)$ 
of $\Sigma$ on $X \otimes_{\ell_\Sigma} A^\Sigma$
is unitarily equivalent to
the regular covariant representation
$(\tilde\rho\, ,\, \tilde\lambda_\rho)$ of $\Sigma$ on $X^G$. Hence, we have 
$$\, (\rho \dot\otimes \ell_\Sigma) \times (v \dot\otimes \lambda_\Sigma)\, \simeq \, \tilde\rho\times\tilde\lambda_\rho\,.$$
 
\end{theorem}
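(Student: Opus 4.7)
The plan is to use the unitary operator $W \in \L(X \otimes_{\ell_\Sigma} A^\Sigma, X^G)$ constructed in Lemma \ref{W} as the intertwiner that realizes the desired equivalence. Concretely, I would verify the two intertwining identities
\begin{align*}
W\,(\rho \dot\otimes \ell_\Sigma)(a) &= \tilde\rho(a)\, W, \qquad a \in A, \\
W\,(v \dot\otimes \lambda_\Sigma)(g) &= \tilde\lambda_\rho(g)\, W, \qquad g \in G,
\end{align*}
by evaluating both sides on elementary tensors $x \dot\otimes \xi$ with $x \in X$, $\xi \in A^\Sigma$, and then checking equality pointwise on $G$. The equality on elementary tensors extends to all of $X \otimes_{\ell_\Sigma} A^\Sigma$ by linearity, continuity, and density.

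For the first identity, applied to $x \dot\otimes \xi$ and evaluated at $h\in G$, the left-hand side reads $v(h)^{-1}\bigl(\rho(a) x \cdot \xi(h)\bigr)$, while the right-hand side reads $\rho(\alpha_h^{-1}(a))\, v(h)^{-1}\bigl(x \cdot \xi(h)\bigr)$. Equality then follows directly from axiom (i) of equivariant representations (rewritten as $v(h)^{-1}\rho(a) = \rho(\alpha_h^{-1}(a))\, v(h)^{-1}$) together with the $A$-linearity of $\rho(a)$. This part is essentially bookkeeping.

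The substantive step is the second identity, where the twisting creates the usual cocycle clutter. Setting $k = g^{-1}h$, axiom (iv) (applied in its reverse form $v(h)^{-1}(y \cdot b) = v(h)^{-1}(y) \cdot \alpha_h^{-1}(b)$) first moves the factor $\sigma(g, k)$ arising from $(\lambda_\Sigma(g)\xi)(h)$ outside, producing a term $\alpha_h^{-1}(\sigma(g,k))$. What remains is $v(h)^{-1} v(g)\bigl(x \cdot \xi(k)\bigr)$, which must be matched with $\rho(\alpha_h^{-1}(\sigma(g,k)))\,v(k)^{-1}\bigl(x \cdot \xi(k)\bigr)$ on the right. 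I would handle this by using axiom (ii) to replace $v(h)^{-1}v(g) = v(gk)^{-1} v(g)$ by a combination of $v(k)^{-1}$, $\rho(\alpha_g^{-1}(\sigma(g,k)))$, and module multiplication by an $\alpha_{gk}^{-1}$-image of $\sigma(g,k)^*$. Axiom (i) then shifts $\rho(\alpha_g^{-1}(\sigma(g,k)))$ past $v(k)^{-1}$, at the cost of applying an extra $\alpha_k^{-1}$; the cocycle identity $\alpha_g\alpha_k = {\rm Ad}(\sigma(g,k))\alpha_{gk}$ collapses $\alpha_k^{-1}\alpha_g^{-1}(\sigma(g,k))$ down to $\alpha_{gk}^{-1}(\sigma(g,k)) = \alpha_h^{-1}(\sigma(g,g^{-1}h))$, and the residual module factor $\alpha_{gk}^{-1}(\sigma(g,k)^*)$ cancels against the one produced in the first step.

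The only real obstacle is keeping track of the signs and adjoints in this cocycle juggling — nothing conceptually new happens after Lemma \ref{W}, which already does the main module-theoretic work. Once both intertwining relations are established, $W$ provides the unitary equivalence between $(\rho\dot\otimes\ell_\Sigma, v\dot\otimes\lambda_\Sigma)$ and $(\tilde\rho,\tilde\lambda_\rho)$, and the asserted equivalence $(\rho\dot\otimes\ell_\Sigma)\times(v\dot\otimes\lambda_\Sigma) \simeq \tilde\rho\times\tilde\lambda_\rho$ of their integrated forms follows at once.
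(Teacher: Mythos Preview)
Your proposal is correct and follows essentially the same approach as the paper: use the unitary $W$ of Lemma \ref{W} and verify the two intertwining identities on elementary tensors $x\dot\otimes\xi$. The only difference is organizational: for the second identity the paper rewrites axiom (ii) once as $v(g) = {\rm ad}_\rho(\sigma(g,g^{-1}h))\,v(h)\,v(g^{-1}h)^{-1}$ and computes $\tilde\lambda_\rho(g)W$ directly, which packages all the cocycle bookkeeping you describe into a single substitution and avoids the separate cancellation step.
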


\begin{proof}
Let $W \in \L(X \otimes_{\ell_\Sigma} A^\Sigma\, , \, X^G)$ be the unitary defined in Lemma 
\ref{W}.
To prove the assertion, it is enough to prove the equalities 
$\tilde\rho(a)W = W (\rho \dot\otimes \ell_\Sigma)(a)$
and 
$\tilde\lambda_\rho(g)W = W(v \dot\otimes \lambda_\Sigma)(g)$ for all  $a\in A, \, g\in G$.
We will check these on a total set of vectors.

\smallskip We have
\begin{align*} 
\big[\tilde\rho(a)W (x \dot\otimes \xi)\big](h) 
& = \rho(\alpha_h^{-1}(a))\big([W(x \dot\otimes \xi)](h)\big) \\ 
& = \rho(\alpha_h^{-1}(a))\big([v(g)^{-1}(x \cdot \xi(h))]\big) \\ 
& = v(h)^{-1} \rho(a)(x \cdot \xi(h)) \\ 
& = v(h)^{-1} \big((\rho(a)x) \cdot \xi(h)\big) \\ 
& = \big[W(\rho(a)x \dot\otimes \xi)\big] (h) \\ 
& = \big[W((\ell_\Sigma)_* \circ \rho)(a)(x \dot\otimes \xi)\big] (h) \\ 
& = \big[W\big(\rho \dot \otimes \ell_\Sigma\big)(a)(x \dot\otimes \xi)\big] 
(h) \ ,
\end{align*}
for all $a \in A, x \in X, \xi \in A^\Sigma, h \in G$.

\medskip Similarly, using property $(ii)$ of equivariant representations, we have
$$ v(g) = {\rm ad}_\rho(\sigma(g, g^{-1}h)) \, v(h) v(g^{-1}h)^{-1}\, , \quad g,h \in G\,,$$
and this gives
\begin{align*} 
\big[\tilde\lambda_\rho(g)W (x \dot\otimes \xi)\big](h) 
& = \rho(\alpha_h^{-1}(\sigma(g,g^{-1}h))) 
\big([W(x \dot\otimes \xi)](g^{-1}h)\big) \\ 
& = v(h)^{-1} \rho(\sigma(g,g^{-1}h)) v(h)\,  v(g^{-1}h)^{-1} 
(x \cdot \xi(g^{-1}h)) \\
& = v(h)^{-1} \Big( \big(v(g) (x \cdot (\xi(g^{-1}h)) \big) \cdot 
\sigma(g,g^{-1}h) \Big) \\
& = v(h)^{-1} \big((v(g)x) \cdot (\lambda_\Sigma(g)\xi)(h)\big) \\ 
& = \big[W(v(g)x \dot\otimes \lambda_\Sigma(g)\xi)\big] (h) \\
& = \big[W\big(v \dot\otimes\lambda_\Sigma\big)(g)
(x \dot\otimes\xi)\big](h) \ ,
\end{align*}
for all $x \in X, \xi \in A^\Sigma, \, g, h \in G$.

\end{proof}

To prove Theorem \ref{Phi-thm}, we will use the same notation as in the proof of Theorem \ref{fell}.

\bigskip \noindent {\it Proof of Theorem \ref{Phi-thm}}.  As discussed in \cite[Section 2]{BeCo3},
the Hilbert $A$-modules $X^G$ and $A^G \otimes_\rho X$ are unitarily 
equivalent via the map $U \in \L (A^G \otimes_\rho X, X^G)$ given by
$$
\big[ U (f \dot\otimes x)\big] (h) = \rho(f(h))x\,, 
\quad f \in A^G, \,x \in X, \,h \in G\, .
$$
Moreover, letting $\rho_*: \L(A^G)\to \L(A^G\otimes_\rho X)$ denote the canonical homomorphism, we have
$$U \rho_*(\tilde\ell(a)) U^* = \tilde\rho(a) \, \quad \quad\, \,\,   \text{for all}\, \, a\in A\, ,$$
$$U \rho_*(\tilde\lambda_\ell(g)) U^* = \tilde\lambda_\rho(g) \, \quad \quad \text{for all}\, \, g\in G\, . 
\ $$

\medskip For $x \in X$, let 
$\theta_x \in \L(A^\Sigma,X \otimes_{\ell_\Sigma} A^\Sigma)$ 
be defined as in \cite[Lemma 4.6]{La1}, that is,
$$\theta_x(\xi) = x \dot\otimes \xi\, ,\quad   \xi \in A^\Sigma\,.$$
Then, for all $y \in X, \, \eta \in A^\Sigma$, we have  $$\theta_x^*(y \dot\otimes \eta) = \ell_\Sigma(\langle x, y 
\rangle) \, \eta = \langle x, y 
\rangle \, \eta$$
(since we identify $A$ with $\ell_\Sigma(A)$). 

\medskip Let $x, y \in X$ be given. Then define a  linear
 map $\Phi : \L(A^G) \to \L(A^\Sigma)$   by
\begin{equation*}
\Phi(\cdot)=\theta_x^* \, W^* U \, \rho_{*}(\cdot)\, U^* W
\,\theta_y\, .
\end{equation*}
Then $\Phi$ is completely bounded (see e.g.\ \cite{Pau}), with 
$$\|\Phi\|\leq \|\Phi\|_{cb} \leq 
\|x\| \, \|y\|\, .$$
Moreover, if $x=y$, then 
$\Phi(\cdot)=\theta_x^* \, W^* U \, \rho_{*}(\cdot)\, U^* W 
\theta_x$ becomes completely positive and satisfies
$$\|\Phi\|= \|\Phi\|_{cb}= \|\Phi(I)\| = \|x\|^2\,.$$

Now, for all $a \in A, g \in G, \, \xi \in A^\Sigma$,  we compute
\begin{align*}
\Phi\big(\tilde\ell(a)\tilde\lambda_\ell(g)\big) \xi
& = \theta_x^*\, W^* \, U\,  \rho_*\big(\tilde\ell(a)\tilde\lambda_\ell(g)\big)
\, U^*\,  W\,  \theta_y \, \xi \\
& = \theta_x^*\,  W^*\,  \tilde\rho(a) \tilde\lambda_\rho(a)\,  W\,  (y \dot\otimes \xi)
\\
& = \theta_x^* \,  (\rho \dot\otimes \ell_\Sigma)(a) \, 
(v \dot\otimes\lambda_\Sigma)(g) \, (y \dot\otimes \xi) \\
& = \theta_x^*\,  \big[(\ell_\Sigma)_*\big(\rho(a)\big)\big] \, \big(v(g)y \dot\otimes 
\lambda_\Sigma(g)\,  \xi\big) \\
& = \theta_x^* \big( \rho(a) v(g)y \dot\otimes \lambda_\Sigma(g)\xi\big) 
\\
& = \big\langle x, \rho(a)v(g)y \big\rangle\,  \lambda_\Sigma(g)\, \xi \,.
\end{align*}
Hence, letting $M_T$ be the restriction of  $\Phi \circ {\rm Ad}(J^*)$ to $C_r^*(\Sigma) \subseteq \L(A^\Sigma)$,
we clearly get a completely bounded map 
$M_T : C_r^*(\Sigma)  \to C_r^*(\Sigma) $ satisfying $$M_T(a\lambda_\Sigma(g)) =  \big\langle x, \rho(a)v(g)y \big\rangle\,  \lambda_\Sigma(g) = T_g(a) \,  \lambda_\Sigma(g)$$
for all $\,a\in A, \, g \in G$. This means that $T \in M_0A(\Sigma)$,  and $M_T$ satisfies the desired properties.  

\hfill $\square$

\medskip 
\begin{example} \label{funct-ex} (Example \ref{equiv-funct} revisited).
 Let $(\rho,v)$ be an equivariant representation of 
$\Sigma$ on a Hilbert module $X$ and $x,\, y \in X$. Let $T$ denote the associated multiplier of $\Sigma$, as in Theorem \ref{Phi-thm}. Assume that $x$ or $y$ lies in the central part $Z_X$ 
of $X$ and let $\varphi: G \to A$ be given by $\varphi(g) = \big\langle x,  v(g)y \big\rangle$.
Then, as  $Z_X$ is left invariant by each $v(g)$ (cf. \cite{BeCo3}),  $T$ is given 
by $$T(g,a) = \big\langle x,  v(g)y \big\rangle\, a = \varphi(g) \, a \, \quad \text{if} \, \, y\in Z_X, $$ or as
$$T(g,a) = a \, \big\langle x ,  v(g)y \big\rangle\, = a\, \varphi(g) \,  \quad \text{if} \, \, x\in Z_X .$$
Hence, we recover the multipliers considered in Example \ref{equiv-funct}.
Note that if $x$ and $y$ both lie in $Z_X$, then $\varphi$ takes its values in $Z(A)$ (the center of $A$). 
Moreover, if $x=y \in Z_X$, then we have
$\varphi(g) = \big\langle x,  v(g)x \big\rangle$ for all $g \in G$, so $\varphi$ is of positive type (w.r.t. $\alpha$) in the sense of Anantharaman-Delaroche \cite{AD1} (assuming $\sigma$ is trivial). We do not know whether functions from $G$ to $A$ of  positive type give rise to  multipliers of $\Sigma$ in general.

\hfill $\square$
\end{example}

\begin{example} Let $w$ be a unitary representation of $G$ on a 
Hilbert space $\H$ and let $(\rho,v)$ be an equivariant representation of 
$\Sigma$ on the Hilbert module $X$. One can then consider the Hilbert 
$A$-module $X \otimes \H$ 
and the equivariant representation 
$(\rho \otimes \iota, v \otimes w)$ of $\Sigma$ on $X \otimes \H$. (We leave to the reader to verify that this is indeed an equivariant representation;
note that $(\rho \otimes \iota, v \otimes w)$ will not necessarily give an equivariant representation  if $w$ is assumed to be a uniformly bounded representation of $G$).

\medskip Let $x,y \in X$, $\xi,\eta \in \H$, 
so that $x \otimes \xi, y \otimes \eta \in X \otimes \H$. Then,
by Theorem \ref{Phi-thm}, we get a multiplier $T' \in M_0A(\Sigma)$ given by
\begin{align*}
T'(g,a) & = \big\langle x \otimes \xi\, ,  (\rho \otimes \iota)(a)(v \otimes 
w)(g) (y \otimes \eta)\big\rangle  \\
& = \big\langle x \otimes \xi\, , \, \rho(a) v(g)y \otimes 
w(g)\eta\big\rangle \\
& = \big\langle x\, ,\,  \rho(a) v(g)y \big\rangle \, 
\big\langle \xi\,, w(g)\eta \big\rangle \, , \quad a\in A, \, g\in G\,.
\end{align*}

Note that if $(\rho,v)=(\ell,\alpha)$ and $x=y=1$,  then  $T'(g,a) = \langle \xi, w(g)\eta \rangle \, a$, so $T'= T^\varphi$ where $\varphi(g) = \langle \xi, w(g)\eta \rangle$. Thus $B(G)$ naturally embeds into $B(\Sigma)$.

\smallskip This example is an illustration that we have $B(G) \, B(\Sigma) \subseteq B(\Sigma)\,$ (with respect to the natural product structure in $B(\Sigma)$).

\hfill$\square$

\end{example}

\begin{example} \label{reg-equiv} Let $(\rho,v)$ be an equivariant representation of 
$\Sigma$ on $X$ and consider the induced regular equivariant 
representation $(\check\rho,\check{v})$ of $\Sigma$ on $X^G$. 
Theorem \ref{Phi-thm} gives that the map 
$(g,a) \to \langle \xi, \check\rho(a)\check{v}(g)\,\eta \rangle$ is a cb-multiplier of $\Sigma$ for any $\xi,\eta \in X^G$. 

Note that this fact can also be deduced from  
\cite[Proposition 4.13]{BeCo3}
(by letting $(\pi,u)$ in this proposition be $(\ell_\Sigma,\lambda_\Sigma)$
and using that $\tilde\rho \times \tilde\lambda_\rho$ is weakly contained in $\Lambda_\Sigma$).

\hfill $\square$
\end{example}

\begin{example}  Let $\beta$ be an endomorphism of $A$ and assume $\beta$ satisfies the following two conditions: 
\begin{itemize} 
\item[i)] $\, \beta \alpha_g = \alpha_g \beta\, $ for all  $g \in G$.
\item[ii)] $\, \beta\big(\sigma(g,h)\big) = \sigma(g,h)\, $ for all  $g ,\,  h \in G$.
\end{itemize}
Then one checks easily that $\beta$ extends to an endomorphism $\tilde{\beta}$ of $\Lambda_\Sigma\big(C_c(\Sigma)\big)$ satisfying 
\begin{equation} \label{beta}
\tilde{\beta}(a\, \lambda_\Sigma(g)) = \beta(a) \, \lambda_\Sigma(g)\, , \quad a\in A, \, g\in G\,.
\end{equation}
To show that  $\beta$ extends to an endomorphism of $C_r^*(\Sigma)$, we can consider the equivariant representation of $\Sigma$ on $A$ given by $(\rho_\beta, \alpha)$, where $\rho_\beta(a)\, b = \beta(a) \, b, \, a, b \in A$. We leave it as an exercise to verify that conditions $(i)$ and $(ii)$ imply that this is indeed an equivariant representation. Choosing $x=y=1 \in A$ gives  $\langle x, \rho_\beta(a) \alpha(g) y\rangle = \beta(a)\, $ for all $a \in A, \, g\in G$, so Theorem \ref{Phi-thm} tells us that there exists a cb-map $T_\beta$ on $C_r^*(\Sigma)$ satisfying equation (\ref{beta}). Since $T_\beta$ coincides with $\tilde{\beta}$ on $\Lambda_\Sigma\big(C_c(\Sigma)\big)$, it follows that $T_\beta$ is an endomorphism of $C_r^*(\Sigma)$ extending $\beta$, as desired.

\hfill $\square$
\end{example}

One of our motivations for studying multipliers is that they naturally appear in the context of summation processes for Fourier series of elements in $C^*_r(\Sigma)$, that we will discuss in the next section.  As in \cite{BeCo2}, which deals with the case where $A=\Complessi$, we will be interested in multipliers that have some kind of smoothing property. 

To explain this, consider    $T \in MA(\Sigma)$ and $x \in C_r^*(\Sigma)$. Recall that we have

\begin{equation}\label{mult-Fourier}
\widehat{M_T(x)}(g) = T_g\big(\widehat{x}(g)\big)\, , \quad g\in G\,.
\end{equation} 
This means that the Fourier series of $M_T(x)$ is $$\sum_{g\in G} \, T_g\big(\widehat{x}(g)\big)\, \lambda_\Sigma(g)\,.$$ 
In general, there is no reason why this series should converge w.r.t.\ the operator norm  for all $x$ in $C_r^*(\Sigma)$, i.e.\,  it may happen that $M_T(x) \not\in CF(\Sigma)$ for some $x \in  C_r^*(\Sigma)$.
 We therefore define 
$$ MCF(\Sigma)=\big\{\,T\in MA(\Sigma) \mid M_T(x) \in CF(\Sigma)\, \, \textup{for all} \, \, x \in C_r^*(\Sigma)\big\}\,.$$

Following the proof of \cite[Proposition 4.7]{BeCo2}, one can check that  $MCF(\Sigma)$ 
consists of all maps $T: G \times A \to A$ that are linear in the second variable and satisfy that  the series $\sum_{g\in G} \, T_g(\widehat{x}(g))\, \lambda_\Sigma(g)$ converges  w.r.t. $\|\cdot\|$
for every $x\in C_r^*(\Sigma)$. 

\medskip Of course, if $T\in MA(\Sigma)$ has {\it finite $G$-support}, that is, $T_g= 0$ for all but finitely many $g$'s in $G$, then the Fourier series of $M_T(x)$ is just a finite sum for every  $x\in C_r^*(\Sigma)$, so $T \in MCF(\Sigma)$. But one can easily find examples whitout finite $G$-support:

\bigskip 
\begin{example} \label{ell1} Let $\varphi \in \ell^1(G)$. As $\ell^1(G)\subset \ell^2(G) \subset B(G)$,  $T^\varphi \in M_0A(\Sigma)$. 
Moreover, $$\widehat{M_{T^{\varphi}}(x)}=  \widehat{M_\varphi(x)} = \varphi \, \widehat{x} \in \ell^1(G, A)$$ for all $x \in C_r^*(\Sigma)$, so $M_{T^\varphi}(x) \in CF(\Sigma)$ for all $x$, hence $ T^\varphi \in MCF(\Sigma)$. 

\smallskip \noindent When $A=\Complessi$, it is not difficult to show that $T^\varphi \in MCF(\Sigma)$ whenever $\varphi \in \ell^2(G)$, cf.\ \cite[Section 4, p.\, 356]{BeCo2}). But the argument given there does not carry over to the general case, and we do not know if this assertion always holds when $A$ is non-trivial.

\hfill $\square$
\end{example}

\medskip 

The next  proposition shows how  multipliers belonging to $MCF(\Sigma)$ may be produced in a way similar to \cite[Lemma 1.7]{Haa1} and \cite[Proposition 4.8]{BeCo2}. It explains why the $A^\Sigma_{\kappa}$-spaces introduced in Section 3 have to be taken into consideration.

\begin{proposition}\label{CK}
Let $\kappa: G \to [1, \infty)$ and assume  that $\Sigma$ has the 
$A^\Sigma_{\kappa}$-decay property with decay constant $C$.

\medskip \noindent Let $\psi \in \ell^\infty_\kappa(G,A)$, that is, $\psi:G\to A$ satisfies  $K= \|\psi \kappa\|_\infty= \sup_{g \in G} \|\psi(g)\kappa(g)\| < \infty$.

\medskip \noindent Let $L^\psi : G \times A \to A$ be given by  $L^\psi(g,a) = \psi(g) \, a$.

\bigskip \noindent Then $L^\psi \in MCF(\Sigma)$ with $\, \tn L^\psi \tn \leq CK\,.$

\end{proposition}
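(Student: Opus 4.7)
The plan is to reduce the statement directly to Proposition \ref{L-decay} applied with $\L = A^\Sigma_\kappa$ and $\|\cdot\|' = \|\cdot\|_{\alpha,\kappa}$. Fix $x \in C^*_r(\Sigma)$ and define $\xi : G \to A$ by $\xi(g) = \psi(g)\,\widehat{x}(g) = L^\psi_g(\widehat{x}(g))$. The crucial first step is to verify that $\xi \in A^\Sigma_\kappa$, with a norm bound depending on $K$ and $\|x\|$.

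For this step I would compute, for each $g \in G$,
\begin{equation*}
(\xi\kappa)(g)^*(\xi\kappa)(g) = \widehat{x}(g)^*\,\bigl[\psi(g)\kappa(g)\bigr]^*\bigl[\psi(g)\kappa(g)\bigr]\,\widehat{x}(g) \leq K^2\, \widehat{x}(g)^*\widehat{x}(g)\,,
\end{equation*}
so that $\alpha_g^{-1}\bigl((\xi\kappa)(g)^*(\xi\kappa)(g)\bigr) \leq K^2\,\alpha_g^{-1}\bigl(\widehat{x}(g)^*\widehat{x}(g)\bigr)$. Since $\widehat{x}\in A^\Sigma$, the sum $\sum_g \alpha_g^{-1}(\widehat{x}(g)^*\widehat{x}(g))$ converges in norm to an element of $A^+$, and by the domination fact recorded in Section \ref{Preliminaries}, $\sum_g \alpha_g^{-1}((\xi\kappa)(g)^*(\xi\kappa)(g))$ converges in norm as well. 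This shows $\xi\kappa \in A^\Sigma$, i.e.\ $\xi \in A^\Sigma_\kappa$, and yields the estimate $\|\xi\|_{\alpha,\kappa} \leq K\,\|\widehat{x}\|_\alpha \leq K\,\|x\|$ by (\ref{norm-ineq}).

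Once $\xi \in A^\Sigma_\kappa$, the $A^\Sigma_\kappa$-decay hypothesis together with Proposition \ref{L-decay} supplies an element $y := \tilde\Lambda_\Sigma(\xi) \in C^*_r(\Sigma)$ such that $\sum_{g} \xi(g)\,\lambda_\Sigma(g)$ converges to $y$ in operator norm and $\widehat{y} = \xi$. In particular, $y \in CF(\Sigma)$. I would then define $M_{L^\psi}(x) := y$ and check the multiplier identity $\widehat{M_{L^\psi}(x)}(g) = \xi(g) = L^\psi_g(\widehat{x}(g))$ for all $g$, which is immediate by construction; this confirms that $L^\psi$ is a reduced multiplier of $\Sigma$ and that $M_{L^\psi}(x) \in CF(\Sigma)$ for every $x$, hence $L^\psi \in MCF(\Sigma)$.

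For the norm estimate I would use the partial sums $\xi_F = \xi\,\chi_F$ for finite $F\subseteq G$. Since $\xi_F$ is dominated by $\xi$ in the sense that $\|\xi_F\|_{\alpha,\kappa} \leq \|\xi\|_{\alpha,\kappa}$, inequality (\ref{C}) gives
\begin{equation*}
\Bigl\|\sum_{g \in F} \xi(g)\,\lambda_\Sigma(g)\Bigr\| = \|\Lambda_\Sigma(\xi_F)\| \leq C\,\|\xi_F\|_{\alpha,\kappa} \leq C\,\|\xi\|_{\alpha,\kappa} \leq CK\,\|x\|\,,
\end{equation*}
and passing to the limit in $F$ yields $\|M_{L^\psi}(x)\| \leq CK\,\|x\|$. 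Hence $\tn L^\psi \tn = \|M_{L^\psi}\| \leq CK$, completing the proof. The argument is essentially routine once Proposition \ref{L-decay} is in hand; the only genuinely substantive point is the domination estimate in the first step, which is what forces the weight $\kappa$ and the constant $K$ into the final bound.
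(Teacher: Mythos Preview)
Your proof is correct and follows essentially the same approach as the paper: both arguments hinge on the domination estimate $\alpha_g^{-1}\big((\psi\kappa)(g)^*\widehat{x}(g)^*\widehat{x}(g)(\psi\kappa)(g)\big)\leq K^2\alpha_g^{-1}(\widehat{x}(g)^*\widehat{x}(g))$ to place $\psi\,\widehat{x}$ in $A^\Sigma_\kappa$, and then invoke Proposition~\ref{L-decay} to obtain convergence and the bound $CK$. The only cosmetic difference is that the paper first establishes $L^\psi\in MA(\Sigma)$ by checking the bound on $\Lambda_\Sigma(C_c(\Sigma))$ and then separately verifies $M_{L^\psi}(x)\in CF(\Sigma)$, whereas you go directly to general $x$ and define $M_{L^\psi}(x)=\tilde\Lambda_\Sigma(\psi\,\widehat{x})$ via Proposition~\ref{L-decay}, using the alternative characterization of multipliers through (\ref{mult-Fourier1}).
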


\begin{proof}
Let $\xi \in A^\Sigma$. Then
\begin{align*}
\|\psi \xi\|^2_{\alpha,\kappa} & = \|\psi \xi \kappa\|^2_\alpha
=  \Big\| \sum_{g \in G}
\alpha_g^{-1}(\xi(g)^* \psi(g)^* \psi(g)\xi(g)) \;  \kappa(g)^2 \; \Big\| \\
& \leq \Big\| \sum_{g \in G}
\alpha_g^{-1}(\xi(g)^*\xi(g)) \; \|(\psi \kappa)(g)\|^2 \; \Big\| \\
& \leq K^2 \Big\| \sum_{g\in G} \alpha^{-1}_g (\xi(g)^*\xi(g))\Big\| 
= K^2 \|\xi\|^2_\alpha \ . 
\end{align*}
Hence, for any $f \in C_c(\Sigma)$, using (\ref{norm-ineq}), we  have
$$\|M_{L^\psi} \big(\Lambda_\Sigma(f)\big)\|  = \|\Lambda_\Sigma(\psi\, f)\| 
\leq C\,  \|\psi f\|_{\alpha,\kappa} \leq CK \, \| f \|_\alpha
\leq CK \, \|\Lambda_\Sigma(f)\| \ . $$
This shows that $L^\psi \in MA(\Sigma)$ with $\tn L^\psi \tn \leq CK$.

\smallskip \noindent Let $x \in C_r^*(\Sigma)$. Then $\widehat{x} \in A^\Sigma$, so  the above computation gives that $$\|\psi \,\widehat{x} \|_{\alpha,\kappa} \leq K \, \|\widehat{x} \|_{\alpha} < \infty\, .$$
 Thus $\widehat{M_{L^\psi}(x)} = \psi \, \widehat{x} \in A^\Sigma_\kappa$, and it follows from Proposition \ref{L-decay} that
 $M_{L^\psi}(x) \in CF(\Sigma)$.
 
\end{proof}

It seems very unlikely to us that Proposition \ref{CK}  remains true in general if we replace 
$A^\Sigma_{\kappa}$-decay with $\ell^2_\kappa(G,A)$-decay in the assumption.

\section{Summation processes for Fourier series}\label{Summation}

By a {\it Fourier summing net for} $\Sigma$, we will mean a net $\{T^i\}$ in $MCF(\Sigma)$ such that
\begin{equation}\label{F}
\lim _i \, \|M_{T^i}(x) - x \| = 0\,  \quad \textup{for all} \, \, x \in C_r^*(\Sigma)\,.
\end{equation} 
We will say that such a net is {\it bounded} whenever $\sup_i \tn T^i \tn < \infty$. 
We will repeatedly use the fact that, in order to show that a net  $\{T^i\}$ in $MCF(\Sigma)$ is a bounded Fourier summing net, 
one only needs to check that 
\begin{equation} \sup_i \tn T^i \tn < \infty \, \,  \, \, \text{and} \, \,\, \,  \lim_i\, T^i_g(a) = a \, \, \, \, \text{for all} \, \, g \in G, \, a \in A.
\end{equation}  
This assertion is easily shown using an $\varepsilon/3$-argument.

\medskip Assume that $\{T^i\}$ is a Fourier summing net  for $\Sigma$. Note that, as each $T^i$ is  assumed to lie in $MCF(\Sigma)$,
the series 
$$\sum_{g\in G} \, T^i_g\big(\widehat{x}(g)\big)\, \lambda_\Sigma(g)$$ is convergent in operator norm for each $ x \in C_r^*(\Sigma)$ and each $i$, and  equation (\ref{F}) gives 
$$\lim_i \, \sum_{g\in G} \, T^i_g\big(\widehat{x}(g)\big)\, \lambda_\Sigma(g) \, = x  \quad \textup{for all} \, \, x \in C_r^*(\Sigma)\,$$

\vspace{-1ex} \noindent with respect to the operator norm on $C_r^*(\Sigma)$. 

\medskip Hence, a Fourier summing net $\{T^i\}$ for $\Sigma$ provides a summation process for the Fourier series of all elements in $C_r^*(\Sigma)$.   
An interesting open question is whether there always exists a Fourier summing net for $\Sigma$. (To our knowledge, this is still open even when $A$ and $\sigma$ are trivial).

\medskip We will also be interested in Fourier summing nets satisfying an additional property: A Fourier summing net $\{T^i\}$ for $\Sigma$  will be said to
 {\it preserve the invariant ideals of $A$} if every invariant ideal of $A$ is preserved by each $T_g^i$, that is, for  every invariant ideal $J$ of $A$ we have 
 $$T_g^i(J) \subset J \, \, \text{for every}\, \, i \, \, \text{and every} \, \, g \in G\,.$$ 
 Of course, by an invariant ideal of $A$ we mean as usual an ideal of $A$ left invariant by each $\alpha_g$. 
As we will discuss below, the existence of a Fourier summing net for $\Sigma$ that preserves the invariant ideals of $A$ has some useful consequences 
when studying
the ideal structure of $C_r^*(\Sigma)$. This was first observed by Zeller-Meier when $G$ is amenable (cf. \cite[Proposition 5.10] {ZM}), and by Exel \cite{Ex} when $\Sigma$ has the approximation property (in the setting of Fell bundles). From a purely C$^*$-algebraic point of view, these nets are those of primary interest. 
However, as we will soon see, Fourier summing nets  preserving the invariant ideals  do not necessarily exist when $G$ is not {\it exact}, that is,  $C_r^*(G)$ is not exact as a C$^*$-algebra (see \cite{BrOz} and references therein).  

We recall some more terminology and introduce some notation. 

\medskip Let $J$ be an invariant ideal of $A$. The ideal of $C_r^*(\Sigma)$ generated by $J$  will be denoted by $\langle J\rangle$ and called an {\it induced ideal} of $C_r^*(\Sigma)$. Moreover,  $q: A\to \tilde{A}= A/J$ will denote the quotient map, $\tilde{\Sigma}= (\tilde{A}, G, \tilde{\alpha}, \tilde{\sigma})$ the induced quotient system (defined in the obvious way) and $\tilde{q}$  the canonical homomorphism from $C_r^*(\Sigma)$ onto $C_r^*(\tilde{\Sigma})$, determined by
$\tilde{q} \,\Lambda_\Sigma = \Lambda_{\tilde{\Sigma}}\, q\,$.
Then we set $\tilde{J}= {\rm Ker}\,  \tilde{q}$. Finally, we set
$$\check{J}= \big\{x \in C_r^*(\Sigma) \mid \widehat{x}(g) \in J \, \, \text{for all} \, \, g \in G\big\}.$$

\begin{proposition} \label{ideals}
Let $J$ be an invariant ideal of $A$. Then we have
$$\langle J \rangle \, \subset \, \tilde{J} \, \subset \check{J}\, .$$
Assume that  there exists a Fourier summing net $\{T^i\}$ for $\Sigma$  
 that preserves $J$.
 Then we have
 $$\langle J \rangle \, = \, \tilde{J} \, = \check{J}\, .$$

\end{proposition}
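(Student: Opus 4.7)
The plan is to prove the two soft inclusions $\langle J\rangle \subset \tilde J \subset \check J$ first, and then to use the Fourier summing net $\{T^i\}$ preserving $J$ to close the loop by showing $\check J \subset \langle J\rangle$. For the first inclusion, I would apply the identity $\tilde q \circ \Lambda_\Sigma = \Lambda_{\tilde\Sigma}\circ q$ to $a\odot\delta_e$, which gives $\tilde q(a) = q(a)$ for every $a \in A \subset C_r^*(\Sigma)$; hence $J \subset \ker\tilde q = \tilde J$, and since $\tilde J$ is a closed two-sided ideal of $C_r^*(\Sigma)$, we obtain $\langle J\rangle \subset \tilde J$.

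For the second inclusion, the main observation is the naturality of the Fourier transform with respect to the quotient:
$$\widehat{\tilde q(x)}(g) \;=\; q\bigl(\widehat x(g)\bigr), \qquad x\in C_r^*(\Sigma),\ g\in G.$$
This identity is immediate when $x = \Lambda_\Sigma(f)$ for some $f\in C_c(\Sigma)$, since both sides equal $q(f(g))$ by the relation $\widehat{\Lambda_\Sigma(f)}=f$ together with $\tilde q\,\Lambda_\Sigma=\Lambda_{\tilde\Sigma}\,q$. It extends to arbitrary $x\in C_r^*(\Sigma)$ by density, using that $\widehat{(\cdot)}(g)$ is norm-continuous (thanks to $\|\widehat y(g)\|\le \|y\|$) and that $q$ is continuous. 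Consequently, if $x\in \tilde J$, then $\tilde q(x)=0$ forces $q(\widehat x(g))=0$ for all $g$, so $\widehat x(g)\in J$ for all $g$, i.e., $x\in \check J$.

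For the reverse direction, suppose $x\in \check J$ and fix $i$. Each Fourier coefficient $T^i_g(\widehat x(g))$ of $M_{T^i}(x)$ lies in $J$, because $T^i_g$ preserves $J$ and $\widehat x(g)\in J$. Since $T^i \in MCF(\Sigma)$, the series $\sum_{g\in G}T^i_g(\widehat x(g))\lambda_\Sigma(g)$ converges in operator norm to $M_{T^i}(x)$, and every finite partial sum is of the form $\sum_{g\in F} b_g\,\lambda_\Sigma(g)$ with $b_g\in J$, which manifestly lies in $\langle J\rangle$. As $\langle J\rangle$ is norm-closed, $M_{T^i}(x)\in\langle J\rangle$ for every $i$, and the Fourier-summing property $M_{T^i}(x)\to x$ then yields $x\in\langle J\rangle$. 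There is no genuine obstacle here: the only mildly technical ingredient is the naturality identity underlying $\tilde J\subset \check J$; everything else is bookkeeping that rides on the closedness of $\langle J\rangle$ and the fact that each approximant $M_{T^i}(x)$ is built from Fourier coefficients sitting inside $J$.
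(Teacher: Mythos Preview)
Your proof is correct and follows essentially the same route as the paper's: both establish $\langle J\rangle\subset\tilde J$ by observing that $J\subset\ker\tilde q$ (the paper does this via the explicit description of $\langle J\rangle_{\mathrm{alg}}$, you via $\tilde q(a)=q(a)$, but these are equivalent), both use the naturality identity $\widehat{\tilde q(x)}(g)=q(\widehat x(g))$ for $\tilde J\subset\check J$, and both close the loop exactly as you do, using that $M_{T^i}(x)\in\langle J\rangle$ and $M_{T^i}(x)\to x$. Your write-up is in fact slightly more explicit than the paper's, which leaves the naturality identity as an ``easy exercise.''
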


\begin{proof} The first inclusion is well known, at least when $\sigma$ is trivial. For completeness, we sketch the argument. Using the  invariance of $J$ and the covariance relation, 
one sees that $\langle J\rangle$ is the norm closure of $$\langle J\rangle_{alg}= \Big\{ \sum_{g \in F} a_g \, \lambda_\Sigma(g) \mid F \subset G, \,F \, \, \text{finite}, a_g \in J \,\, \text{for all}\, \, g \in F\Big\}\, .$$
As $\tilde{q}$ obviously maps $\langle J\rangle_{alg}$ to $\{0\}$ and $\tilde{J}$ is closed, it is clear that $\langle J \rangle \subset \tilde{J}$. 

\smallskip Next, it is an easy exercise to check that for all $x \in C_r^*(\Sigma)$ and $g \in G$, we have $$q(\widehat{x}(g)) = \widehat{q(x)}(g)\, .$$
It follows that if $x \in \tilde{J}$, then $q(\widehat{x}(g)) = 0$ for every $g \in G$, hence that $\widehat{x} (g)\in J$ for  every $g \in G$. This shows the second inclusion. 

\smallskip Now, assume that  there exists a Fourier summing net $\{T^i\}$ for $\Sigma$  
 that preserves $J$
 and consider $x \in \check{J}$. For every $i$, set 
 $$x_i = \sum_{g\in G} \, T^i_g(\widehat{x}(g))\, \lambda_\Sigma(g)\, .$$ 
Using the assumption, we have $T^i_g(\widehat{x}(g)) \in J$ for every $i$ and every $ g \in G$. As $\langle J\rangle$ is closed, we get $x_i \,  \in \langle J\rangle$ for every $i$. Since $x$ is the norm-limit of $\{x_i\}$, this implies that $x \in \langle J\rangle$. This shows that  $\check{J} \subset \langle J\rangle $ and the last assertion clearly  follows.

 \end{proof}

One should note that if $G$ is exact, then Exel has shown that we also have $\langle J \rangle \, = \, \tilde{J} \, = \check{J}\, $ for every invariant ideal $J$ of $A$ (see \cite[Theorem 5.2]{Ex2}\footnote{This article of Exel is the preprint version of \cite{Ex3}. It contains a section on induced ideals that was removed in the published version.}).

\smallskip We will say that $\Sigma = (A, G, \alpha, \sigma)$ is {\it exact} whenever we have $\langle J \rangle = \tilde{J}$ for every invariant ideal $J$ of $A$. When  $\sigma$ is trivial, this terminology was recently introduced  by A. Sierakowski in  \cite{Si} to give a characterization of systems $(A, G, \alpha)$ having the property that all ideals of  $C_r ^*(A, G, \alpha)$ are induced. As shown by E. Kirchberg and S. Wassermann (cf. \cite[Theorem 5.1.10]{BrOz}), it is then known that $G$ is exact if and only if $(B, G, \beta)$ is exact for every action $\beta$ of $G$ on some C$^*$-algebra $B$. In fact, $G$ is exact if and only if the system $(B, G, \beta, \omega)$ is exact for every twisted action $(\beta, \omega)$ of $G$ on a C$^*$-algebra $B$, as follows easily from \cite[Theorem 4.4]{Ex3}. Now, an immediate consequence of Proposition \ref{ideals} is:
 
 \begin{corollary} \label{exact}

Assume that  there exists a Fourier summing net $\{T^i\}$ for $\Sigma$  
 that preserves the invariant ideals of $A$.
 Then  $\Sigma$ is exact.
 
\end{corollary}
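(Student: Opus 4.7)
The proof is essentially an immediate consequence of Proposition \ref{ideals}. My plan is as follows.

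Fix an arbitrary invariant ideal $J$ of $A$. The goal, according to the definition of exactness of $\Sigma$ given just before the corollary, is to verify that $\langle J \rangle = \tilde J$. The hypothesis gives us a Fourier summing net $\{T^i\}$ for $\Sigma$ that preserves every invariant ideal of $A$, so in particular $T^i_g(J) \subseteq J$ for all $i$ and all $g \in G$. Hence $\{T^i\}$ is a Fourier summing net for $\Sigma$ that preserves the specific invariant ideal $J$.

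Applying Proposition \ref{ideals} to $J$, we conclude $\langle J \rangle = \tilde J = \check J$, which in particular yields the equality $\langle J \rangle = \tilde J$. Since $J$ was an arbitrary invariant ideal of $A$, this equality holds for every such $J$, and therefore $\Sigma$ is exact by definition.

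There is no real obstacle here; the work has already been done in Proposition \ref{ideals}. The only thing to be careful about is the logical quantification: the hypothesis of the corollary is stronger than the hypothesis of Proposition \ref{ideals} (preservation of \emph{all} invariant ideals rather than a single one), and this is precisely what allows the conclusion $\langle J \rangle = \tilde J$ to be drawn uniformly in $J$, giving exactness.
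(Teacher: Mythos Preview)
Your argument is correct and matches the paper's approach exactly: the paper simply states that the corollary is an immediate consequence of Proposition~\ref{ideals}, and you have spelled out precisely this deduction.
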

 
One may therefore wonder whether exactness of $G$ always implies the existence of a Fourier summing net for $\Sigma$ that preserves the invariant ideals of $A$. 
 
 On the other hand, assume that $G$ is not exact. This means that there exists a  C$^*$-algebra $B$ such that $(B,G,{\rm id})$ is not exact (since $C_r^*(B,G,{\rm id}) \simeq B \otimes C_r^*(G) $). Hence, it follows from Corollary \ref{exact} that there exists no Fourier summing net  for $(B,G,{\rm id})$ that preserves the (invariant) ideals of $B$.

It is also known that $G$ is exact if and only if $C_r^*(B,G, \beta, \omega)$ is exact  whenever $(\beta, \omega)$ is a twisted action of $G$ on some exact C$^*$-algebra $B$ (see \cite[Theorem 7.2 and Remark 7.4]{AD2} for the case of untwisted actions; the twisted case can be handled in a similar way). Nevertheless, if $A$ is exact and $\Sigma$ is exact, then $C_r^*(\Sigma)$ is not necessarily exact: to see this, one may for instance consider the trivial action of a non-exact group on a simple exact C$^*$-algebra. However, we have:

\begin{proposition}\label{exactcrp}
Assume that there exists a Fourier summing net $\{T^i\}$ for $\Sigma$  
 that preserves the invariant ideals of $A$. Then $C_r^*(\Sigma)$ is exact  if and only if $A$ is exact.
\end{proposition}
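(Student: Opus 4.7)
The easy direction is standard: since $\ell_\Sigma:A\hookrightarrow C^*_r(\Sigma)$ is faithful, $A$ may be viewed as a (unital) C$^*$-subalgebra of $C^*_r(\Sigma)$, and C$^*$-subalgebras of exact C$^*$-algebras are exact.

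For the nontrivial direction, assume $A$ is exact. By the standard characterization of exactness via minimal tensor products, it suffices to show that for every C$^*$-algebra $P$ and every ideal $I\subseteq P$,
\[
\ker\big(C^*_r(\Sigma)\otimes P\;\longrightarrow\; C^*_r(\Sigma)\otimes (P/I)\big)\;=\;C^*_r(\Sigma)\otimes I.
\]
The plan is to transfer this identity into a statement about induced ideals inside a larger reduced twisted crossed product. Introduce the system $\Sigma_P:=(A\otimes P,\,G,\,\alpha\otimes {\rm id}_P,\,\sigma\otimes 1_P)$ and use the canonical spatial identification $C^*_r(\Sigma)\otimes P\simeq C^*_r(\Sigma_P)$, arising from $A^\Sigma\otimes P \simeq (A\otimes P)^{\Sigma_P}$ and sending $a\,\lambda_\Sigma(g)\otimes p$ to $(a\otimes p)\,\lambda_{\Sigma_P}(g)$. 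Under this identification the quotient map corresponds to the canonical surjection $C^*_r(\Sigma_P)\to C^*_r(\Sigma_{P/I})$ induced by ${\rm id}_A\otimes q$, and by exactness of $A$ its kernel at the coefficient level is $A\otimes I$, an invariant ideal of $A\otimes P$. Hence the displayed kernel becomes $\widetilde{A\otimes I}$ in the notation of Proposition \ref{ideals}, while $C^*_r(\Sigma)\otimes I$ corresponds to the induced ideal $\langle A\otimes I\rangle$. It thus suffices to establish $\langle A\otimes I\rangle=\widetilde{A\otimes I}$ inside $C^*_r(\Sigma_P)$.

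By Proposition \ref{ideals} this would follow at once from the existence of a Fourier summing net for $\Sigma_P$ that preserves the invariant ideal $A\otimes I$. The natural candidate is the tensored family $\{T^i_P\}$ with $(T^i_P)_g=T^i_g\otimes {\rm id}_P$: the ideal-preservation property is immediate since $T^i_g(A)\subseteq A$ and ${\rm id}_P(I)\subseteq I$, and the associated multiplier should correspond to $M_{T^i}\otimes {\rm id}_P$ under the identification above.

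The main obstacle is verifying that $\{T^i_P\}$ is genuinely a Fourier summing net for $\Sigma_P$, i.e.\ that each $T^i_P$ lies in $MCF(\Sigma_P)$ and that $M_{T^i_P}(y)\to y$ in operator norm for every $y\in C^*_r(\Sigma_P)$. The bounded-extension step works cleanly when each $M_{T^i}$ is completely bounded, which is the case for all Fourier summing nets actually constructed in Section \ref{Summation} (they come from cb-multipliers lying in $M_0A(\Sigma)$ or $B(\Sigma)$ and typically have finite $G$-support). A uniform cb-bound then combines with the pointwise convergence $(T^i_g\otimes {\rm id}_P)(a\otimes p)=T^i_g(a)\otimes p\to a\otimes p$ via a standard $\varepsilon/3$-argument to yield norm-convergence of $M_{T^i_P}$ to the identity on $C^*_r(\Sigma_P)$. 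The $MCF$ condition for $T^i_P$ is immediate when $T^i$ has finite $G$-support, and otherwise is inherited from that of $T^i$ by a slicing argument on $P$. Outside the cb setting, one would need either to restrict attention to cb Fourier summing nets or to give a more delicate direct argument exploiting the ideal-preservation property of the $T^i$.
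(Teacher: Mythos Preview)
Your approach is essentially identical to the paper's: pass to the system $\Sigma_P=(A\otimes P,G,\alpha\otimes{\rm id}_P,\sigma\otimes 1_P)$, tensor the given Fourier summing net to obtain $S^i_g=T^i_g\otimes{\rm id}_P$, and invoke Proposition~\ref{ideals} for the invariant ideal $A\otimes I$ to conclude that $\langle A\otimes I\rangle=\widetilde{A\otimes I}$, which under the identification $C^*_r(\Sigma_P)\simeq C^*_r(\Sigma)\otimes P$ is precisely the required exactness.

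The one point of divergence is that you flag explicitly a technical subtlety the paper's sketch leaves tacit: verifying that $\{S^i\}$ is genuinely a Fourier summing net for $\Sigma_P$ requires, in particular, that $M_{T^i}\otimes{\rm id}_P$ extends boundedly to the minimal tensor product, which in general needs complete boundedness of $M_{T^i}$. The paper simply writes ``it is easy to check that $\{S^i\}$ is a Fourier summing net'' and refers the reader to the analogous argument in \cite{AD2}. Your caution is well placed; as you observe, all concrete Fourier summing nets produced in Section~\ref{Summation} consist of cb-multipliers (indeed typically with finite $G$-support), so the issue does not arise for any of the applications, but the statement as formulated in the paper does not assume complete boundedness.
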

\begin{proof}
We only sketch the proof, as it is close to the proof of  \cite[Theorem 7.2]{AD2}. 
Assume that $A$ is exact and let $0 \to I \to B \to B/I \to 0$ be a short exact sequence for some C$^*$-algebra $B$. Consider the twisted action $(\alpha\otimes {\rm id}_B, \sigma \otimes 1_B)$ of $G$ on $A \otimes B$. 

For each $i$, let $S^i:G \times (A\otimes B) \to A \otimes B$ be given by $S_g^i= T^i_g\otimes {\rm id}_B$ for each $g \in G.$  Then it is easy to check that $\{ S^i\}$ is a Fourier summing net for 
the system $\Omega= (A\otimes B,\, G, \,\alpha\otimes {\rm id}_B,\, \sigma \otimes 1_B)$. Now $J=A \otimes I$ is an invariant ideal of $A\otimes B$ that is clearly preserved by $\{S^i\}$. Proposition \ref{ideals} gives therefore that $\langle J \rangle = \tilde{J}$. Using the obvious identification of  $C_r^*(\Omega)$ with $ C_r^*(\Sigma) \otimes B$, one then observes that this fact corresponds to the exactness of the sequence
$$ 0 \to  C_r^*(\Sigma) \otimes I  \to C_r^*(\Sigma) \otimes B \to C_r^*(\Sigma) \otimes B/I \to 0\,.$$ 
This shows that $C_r^*(\Sigma)$ is exact. The converse implication is trivial since exactness of C$^*$-algebras passes to C$^*$-subalgebras.

\end{proof}
We will show below that if there exists a Fourier summing net for $\Sigma$ that preserves the invariant ideals of $A$, then the induced ideals of $C_r^*(\Sigma)$ may be characterized by 
certain invariance properties.
 
 \medskip 
Let $\mathcal{J}$  be an ideal of $C_r^*(\Sigma)$. Then $\mathcal{J} \cap A$ is an invariant ideal of $A$, that may be equal to \{0\} even if $\mathcal{J}\neq \{0\}$. On the other hand,  $E(\mathcal{J})$ is easily seen to be an invariant algebraic ideal of $A$ that contains $\mathcal{J} \cap A$. Moreover, since $E$ is faithful, we have $E(\mathcal{J}) \neq \{0\}$ if $\mathcal{J}\neq \{0\}$. However, it is not obvious that $E(\mathcal{J})$ is necessarily closed in general.  

\medskip We will say that $\mathcal{J}$
 is $E$-{\it invariant} 
 when $E(\mathcal{J}) \subset \mathcal{J}$.
 
  \medskip Note that when $G=\Relativi$ and $\sigma$ is trivial, $E$-invariant ideals of $C_r^*(\Sigma)$ are called {\it well behaving} in \cite{Tom2}.
It is straightforward to see that an ideal $\mathcal{J}$  of $C_r^*(\Sigma)$ is $E$-invariant if and only if $E(\mathcal{J}) = \mathcal{J} \cap A$; especially, $E(\mathcal{J})$ is then a (closed) invariant ideal of $A$. 

\medskip  It is well known and easy to check that any induced ideal of $C_r^*(\Sigma)$ is $E$-invariant. It is not known in general whether the converse is true, i.e. whether any $E$-invariant ideal of $C_r^*(\Sigma)$ is induced. However, this holds whenever $G$ is exact, as shown by Exel  \cite[Corollary 5.3]{Ex2}.

\medskip  The concept of $E$-invariance is related to another kind of invariance. Following \cite{LPRS, GL}, we will say that an ideal $\mathcal{J}$ of $C_r^*(\Sigma)$ is {\it $\delta_\Sigma$-invariant} whenever $$\delta_\Sigma(\mathcal{J}) \subset \mathcal{J} \otimes C_r^*(G)\, .$$ Here $\delta_\Sigma$ denotes the (reduced) dual coaction of $G$ on $\Sigma$ defined in Section 4. It is evident that every induced ideal of $C_r^*(\Sigma)$ is $\delta_\Sigma$-invariant. Moreover, every $\delta_\Sigma$-invariant ideal  of $C_r^*(\Sigma)$ is $E$-invariant: this follows readily after checking that we have $E= ({\rm id}\otimes \tau) \, \delta_\Sigma$, where $\tau$ denotes the canonical tracial state on $C_r^*(G)$.  

Hence, if $G$ is exact, we get from Exel's result mentioned above that an ideal $\mathcal{J}$ of $C_r^*(\Sigma)$ is $E$-invariant if and only if it is $\delta_\Sigma$-invariant,  if and only if it is induced. In the case where $G$ is amenable and $\sigma$ is trivial, the last part of this statement follows from  \cite[Theorem 3.4]{GL}.

\medskip 
\begin{proposition}\label{bij}
Assume that 
$G$ is exact or that there exists a Fourier summing net $\{T^i\}$ for $\Sigma$  
 that preserves the invariant ideals of $A$.
 
 \smallskip Then an ideal of $C_r^*(\Sigma)$ is $E$-invariant if and only if it is $\delta_\Sigma$-invariant, if and only if it is induced. 
 
 Hence, the map $J \to \langle J\rangle$ is a 
 bijection between the set of all invariant ideals of $A$ and the set of all $E$-invariant ideals of $C_r^*(\Sigma)$. 
  \end{proposition}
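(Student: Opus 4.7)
The plan is to complete the equivalence chain by establishing the only genuinely new implication, namely that every $E$-invariant ideal of $C_r^*(\Sigma)$ is induced; the two other implications (induced $\Rightarrow$ $\delta_\Sigma$-invariant and $\delta_\Sigma$-invariant $\Rightarrow$ $E$-invariant) were already recorded in the paragraphs preceding the statement. Under the exactness hypothesis on $G$, this missing implication is precisely Exel's \cite[Corollary 5.3]{Ex2}, cited above, so nothing further would need to be done in that case.

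Under the Fourier-summing-net hypothesis, I would argue as follows. Let $\mathcal{J}$ be an $E$-invariant ideal and set $J=E(\mathcal{J})=\mathcal{J}\cap A$; by the remarks made just before the statement, $J$ is a closed invariant ideal of $A$. For any $x\in\mathcal{J}$ and $g\in G$, the element $x\,\lambda_\Sigma(g)^*$ still lies in $\mathcal{J}$, so
$$\widehat{x}(g)=E\bigl(x\,\lambda_\Sigma(g)^*\bigr)\in E(\mathcal{J})=J,$$
showing $\mathcal{J}\subseteq\check{J}$. Since by hypothesis the Fourier summing net preserves the invariant ideal $J$, Proposition \ref{ideals} gives $\check{J}=\langle J\rangle$. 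Combined with the obvious inclusion $\langle J\rangle\subseteq\mathcal{J}$, this yields $\mathcal{J}=\langle J\rangle$, so $\mathcal{J}$ is induced.

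For the bijection in the second part, I would observe first that $\langle J\rangle$ is always $E$-invariant (being induced, hence $\delta_\Sigma$-invariant, hence $E$-invariant), so $J\mapsto\langle J\rangle$ does map into the set of $E$-invariant ideals. Injectivity follows from the identity $\langle J\rangle\cap A=J$, which holds under either of our hypotheses: by Proposition \ref{ideals} (or Exel's theorem) we have $\langle J\rangle=\tilde{J}=\mathrm{Ker}\,\tilde{q}$, and the restriction of $\tilde{q}$ to $A$ is just the quotient map $q:A\to A/J$. Surjectivity is exactly the content of the equivalence established in the previous paragraph: any $E$-invariant ideal $\mathcal{J}$ equals $\langle\mathcal{J}\cap A\rangle$.

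No real obstacle is foreseen. All the substantive work is packaged in Proposition \ref{ideals} (and, in the exact case, in Exel's theorem); the argument above is essentially bookkeeping that routes $E$-invariance through the identity $\mathcal{J}\subseteq\check{J}$. If there is a subtle point, it lies in remembering that $E(\mathcal{J})=\mathcal{J}\cap A$ for an $E$-invariant ideal, so that $J$ is automatically closed — but this has already been noted in the text.
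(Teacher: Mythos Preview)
Your proposal is correct and follows essentially the same route as the paper: both reduce the exact case to Exel's result, and in the Fourier-summing-net case both show $\langle J\rangle\subseteq\mathcal{J}\subseteq\check{J}$ (via the same computation $\widehat{x}(g)=E(x\lambda_\Sigma(g)^*)\in J$) and then invoke Proposition~\ref{ideals}. The only minor difference is that the paper asserts injectivity of $J\mapsto\langle J\rangle$ as a general fact (it holds without either hypothesis, since $\langle J\rangle\cap A=J$ always), whereas you derive it from $\langle J\rangle=\tilde{J}$; this is a harmless over-justification.
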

 \begin{proof} 
  As the map $J \to \langle J \rangle$ is injective, the second assertion will follow immediately from the first.  Moreover,
 we have just seen that the first assertion holds whenever $G$ is exact. Hence, we assume that there exists a Fourier summing net $\{T^i\}$ for $\Sigma$ that preserves the invariant ideals of $A$. To show that the first assertion holds in this case, in view of our considerations above, it suffices to prove that every $E$-invariant ideal of $C_r^*(\Sigma)$ is induced. So let $\mathcal{J}$  be an $E$-invariant ideal of $C_r^*(\Sigma)$ and set $J= \mathcal{J} \cap A$, i.e. $J =E(\mathcal{J}) $.
 Note that $$\langle J \rangle \subset \mathcal{J} \subset \check{J}\, .$$
 Indeed, the first inclusion is immediate since $J$ is contained in the ideal $\mathcal{J}$. Now, let $x\in \mathcal{J}$ and $g\in G$. Then $x\,\lambda_\Sigma(g)^* \in \mathcal{J}$, so $$\widehat{x}(g) =E(x\, \lambda_\Sigma(g)^*)\in E(\mathcal{J}) = J \, .$$
 This shows that $x \in \check{J}$ and the second inclusion follows. 
Appealing to Proposition \ref{ideals}, we can then conclude that $\mathcal{J} = \langle J \rangle = \check{J}$,  hence that  $\mathcal{J} $ is induced, as desired.

 \end{proof}
 
\begin{example} \label{wPo} Assume that $G$ is a weak Powers group (see \cite{Bed, dH} and references therein), e.g. $G$ is a non-abelian free group or a  free product of non-trivial groups that is different from $\Relativi_2 * \Relativi_2$. 
We recall a few facts from \cite{Bed}. A simple $G$-averaging process on $C_r^*(\Sigma)$ is a map $\phi$ from $C_r^*(\Sigma)$ into itself such that for some $n \in \Naturali$ and $s_1, \ldots, s_n \in G$ we have
$$ \phi(x) = \frac{1}{n} \sum_{i=1}^n \lambda_\Sigma(s_i)\, x \,\lambda_\Sigma(s_i)^* \quad \text{for all} \, \, x \in C_r^*(\Sigma) \,.$$
A $G$-averaging process $\psi$ is a composition of finitely many simple $G$-averaging processes. Note that such a linear map $\psi$ is positive and maps any ideal of $C^*_r(\Sigma)$  into itself. Lemma 4.6 in \cite{Bed} says that if $x^*=x \in C^*_r(\Sigma)$  is given, then for every  $\varepsilon > 0$ there exists a $G$-averaging process $\psi_\varepsilon$ such that $\| \psi_\varepsilon\big(x-E(x)\big) \| < \varepsilon$. This lemma is used in \cite{Bed} to show that $C_r^*(\Sigma)$ is simple whenever $A$ is $\alpha$-simple, i.e. it has no other invariant ideals  than $\{0\}$ and $A$. (This result was first proved by P. de la Harpe and G. Skandalis \cite{dHSk} when $G$ is a Powers group and $\sigma$ is trivial). 

Let us now assume that
 there exists a Fourier summing net for $\Sigma$ that preserves the invariant ideals of $A$ (or that $G$ is exact).  We then know  from Proposition \ref{bij} that the invariant ideals of $A$ are in a one-to-one correspondence with the $E$-invariant ideals of $C_r^*(\Sigma)$. 
 
 We consider first the 
 case where $\alpha$ is trivial. As any $G$-averaging process then restricts to the identity map on $A$, it clearly follows from the lemma cited above that any ideal of $C_r^*(\Sigma)$ is $E$-invariant in this case. Hence,
 we get
 that the ideals of $A$ are in a one-to-one correspondence with the ideals of $C_r^*(\Sigma)$. We note that if we also assume that $A$ is commutative, then the existence of a Fourier summing net that preserves the ideals of $A$ may be deduced from Corollary \ref{BFP3} in certain cases (see Example \ref{PSL}).

When $\alpha$ is not trivial, the ideal structure of $C_r^*(\Sigma)$ can be much more complicated.
Nevertheless, we 
can  still obtain some valuable information: as we will show below, the map $J \to \langle J\rangle$ gives a bijection between the maximal invariant ideals of $A$ and the maximal ideals of $B=C_r^*(\Sigma)$.

\smallskip If $J$ is a maximal invariant ideal of $A$ (such an ideal must  exist by Zornification), then,  
using the same notation as in Proposition \ref{ideals}, $B/\langle J\rangle \simeq C_r^*(\tilde{A}, G, \tilde{\alpha}, \tilde{\sigma})$ is simple since $\tilde{A}=A/J$ is $\tilde{\alpha}$-simple (and $G$ is a weak Powers group). Hence, $\langle J \rangle$ is maximal in $B$.
   
  Next, let $\mathcal{J}$ be a {\it proper} ideal of $B$. Then $J=\overline{E(\mathcal{J})}$ is a {\it proper} ideal of $A$. 
Indeed, assume (by contradiction) that $J =A$. Then, as $A$ is unital, $E(\mathcal{J})= A$. So pick $x \in \mathcal{J}$ such that $E(x) = 1$. Then, as $E$ is a Schwarz map \cite{BrOz}, $E(x^*x) \geq E(x)^* E(x) = 1$. Thus $y = x^*x \in \mathcal{J}^+$ satisfies $E(y) \geq 1$. Using the lemma cited above, we can find a $G$-averaging process $\psi$ such that $$\| \psi(y) -\psi(E(y)) \| < \frac{1}{2}\,.$$
Since $\psi(E(y))\geq \psi(1) =1 $ and $\psi(y)$ is positive, this implies that $\psi(y) \in \mathcal{J}$ is invertible. Hence $\mathcal{J} = B$, contradicting that $\mathcal{J}$ is proper. 

Moreover, $J$ is clearly invariant and satisfies $\mathcal{J} \subset \check{J}$. Using Proposition \ref{ideals} (or the remark following it if $G$ is exact), we have $\check{J} = \langle J \rangle$, 
hence $\mathcal{J} \subset \langle J \rangle$. 

Now, if $\mathcal{J}$ is assumed to be maximal, then we get $\mathcal{J} = \langle J \rangle$ and $J$ is necessarily maximal among the invariant ideals of $A$. This proves our assertion.

\hfill$\square$ \end{example}

Following \cite{BeCo2}, we introduce some more terminology. We will say that $\Sigma$  has {\it the Fej{\' e}r  property} if there exists a Fourier summing net $\{ T^i\}$ for $\Sigma$ such that
each $T^i$ has finite $G$-support. If such a net $\{T^i\}$ can be chosen to be bounded, $\Sigma$ will be said  to have {\it the bounded Fej{\' e}r  property}. It is a well-known result due to Zeller-Meier \cite{ZM} that $\Sigma$ has the bounded Fej{\' e}r property whenever $G$ is amenable and $\sigma$ is central. (See  \cite{Da} for a short proof in the case where $G= \mathbb{Z}$ and $\sigma$ is trivial; this case is also discussed in \cite{Tom1}).  The direct analogue of Fej{\'e}r's classical summation theorem for twisted group C$^*$-algebras of amenable groups \cite[Theorem 5.6]{BeCo2} is still valid in our more general setting:

\begin{theorem}\label{Fej} Assume $G$ is amenable. Then $\Sigma$ has the bounded Fej{\' e}r  property. Indeed, pick a F\o lner net  $\{F_i\}$ for $G$  
and let $T^i : G \times A \to A$ be given by 
$$T^i (g, a) = 
\frac{|g F_i  \cap F_i|}{|F_i|} \, a \, , \quad g \in G, \, a \in A\,.$$ 
Then 
$\{T^i\}$ is a  Fourier summing net for $\Sigma$ such that each $T^i$ has finite $G$-support and $\tn T^i \tn = 1$ for each $i$.
\end{theorem}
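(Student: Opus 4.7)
The plan is to recognize the coefficients $\varphi_i(g) = |gF_i \cap F_i|/|F_i|$ as classical positive definite functions on $G$ associated to the Følner net, and then invoke Corollary \ref{posdef} together with the $\varepsilon/3$ criterion stated right after the definition of a bounded Fourier summing net.

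First, I would observe that if we set $\eta_i = |F_i|^{-1/2}\, \chi_{F_i} \in \ell^2(G)$, then a direct computation with the left regular representation $\lambda$ of $G$ on $\ell^2(G)$ gives
\[
\varphi_i(g) \;=\; \bigl\langle \eta_i\,,\, \lambda(g)\eta_i\bigr\rangle\,, \qquad g \in G\,,
\]
so each $\varphi_i$ is a positive definite function on $G$ with $\varphi_i(e) = 1$ and finite support (contained in $F_iF_i^{-1}$). Hence by Corollary \ref{posdef} applied to $\varphi_i$, the map $T^i = T^{\varphi_i}$ belongs to $M_0A(\Sigma)$, the associated operator $M_{T^i}: C^*_r(\Sigma) \to C^*_r(\Sigma)$ is completely positive, and
\[
\tn T^i \tn \;=\; \|M_{T^i}\| \;=\; \|M_{T^i}\|_{cb} \;=\; \varphi_i(e) \;=\; 1\,.
\]
Since each $T^i$ has finite $G$-support, it is automatic that $T^i \in MCF(\Sigma)$ (the Fourier series of $M_{T^i}(x)$ is just a finite sum for every $x \in C^*_r(\Sigma)$).

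Next, I would invoke the defining Følner property: for every fixed $g \in G$ one has $|gF_i \triangle F_i|/|F_i| \to 0$, which gives $\varphi_i(g) \to 1$, and hence
\[
T^i_g(a) \;=\; \varphi_i(g)\, a \;\longrightarrow\; a \qquad \text{for every } g \in G, \ a \in A\,.
\]
Combined with the uniform bound $\tn T^i \tn = 1$, the remark preceding Proposition \ref{ideals} (the standard $\varepsilon/3$-argument, valid because $\Lambda_\Sigma(C_c(\Sigma))$ is norm-dense in $C^*_r(\Sigma)$) yields $\|M_{T^i}(x) - x\| \to 0$ for every $x \in C^*_r(\Sigma)$. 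Thus $\{T^i\}$ is a bounded Fourier summing net for $\Sigma$ with each $T^i$ of finite $G$-support, proving the bounded Fejér property as claimed.

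There is no real obstacle here: the only slightly delicate point is to recognize that the $\varphi_i$ are of the form $\langle \eta_i, \lambda(\cdot)\eta_i\rangle$ so that Corollary \ref{posdef} applies, after which everything reduces to the classical Følner calculation and the $\varepsilon/3$ criterion already recorded in the text.
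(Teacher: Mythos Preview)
Your proof is correct and follows essentially the same route as the paper: identify $\varphi_i(g)=|gF_i\cap F_i|/|F_i|$ as a normalized, finitely supported positive definite function, apply Corollary~\ref{posdef} to get $\tn T^i\tn=1$, note that finite $G$-support forces $T^i\in MCF(\Sigma)$, and conclude via the $\varepsilon/3$ criterion together with the pointwise convergence $\varphi_i\to 1$ given by the F{\o}lner condition. Your explicit realization $\varphi_i=\langle \eta_i,\lambda(\cdot)\eta_i\rangle$ with $\eta_i=|F_i|^{-1/2}\chi_{F_i}$ is exactly the standard verification the paper leaves to its reference \cite{BeCo2}.
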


 \begin{proof} As in \cite{BeCo2}, set 
$\varphi_i(g) = \frac{|g F_i  \cap F_i|}{|F_i|}, 
\,  g \in G.$ 
Then $\{\varphi_i\}$ is a net in $C_c(G)$ of normalized positive definite functions converging pointwise to $1$. 
As $T^i = T^{\varphi_i} $, each $T^i$ has finite $G$-support and it follows from Corollary \ref{posdef} that $T^i \in MCF(\Sigma)$ with $\tn T^i\tn = \varphi_i (e) = 1$ for each $i$. 

\end{proof}  

Theorem \ref{Fej} may be generalized to a class of groups containing nonamenable groups. We recall from \cite[Section 12.3]{BrOz} that  $G$ is called {\it weakly amenable} if there exists a net $\{\varphi_i\}$ of finitely supported functions in $M_0A(G)$ converging pointwise to $1$ which is bounded, that is, $\sup_i \|M_{\varphi_i}\|_{cb} < \infty\,,$ where $M_{\varphi_i}:C_r^*(G)\to C_r^*(G)$ denotes the completely bounded map  associated to each $\varphi_i$. The class of weakly amenable groups contains for example all amenable groups and all groups acting properly on a tree. It is closed under taking subgroups and Cartesian products. See \cite{BrOz} and references therein for other examples. 
\begin{theorem}\label{weak-a} Assume $G$ is weakly amenable. Then $\Sigma$ has the bounded Fej{\' e}r  property.
\end{theorem}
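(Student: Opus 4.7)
The plan is to mimic the proof of Theorem \ref{Fej}, replacing the F\o lner-type positive definite functions by the bounded approximate units in $M_0A(G)$ provided by weak amenability, and then pushing them up to $\Sigma$ via Corollary \ref{cb-mult}.

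More precisely, I would start by fixing a net $\{\varphi_i\}$ of finitely supported functions in $M_0A(G)$ with $\varphi_i \to 1$ pointwise on $G$ and $K := \sup_i \|M_{\varphi_i}\|_{cb} < \infty$, witnessing the weak amenability of $G$. For each $i$, set $T^i := T^{\varphi_i}$, so that $T^i(g,a) = \varphi_i(g)\, a$. By Corollary \ref{cb-mult}, each $T^i$ belongs to $M_0A(\Sigma)$ with
$$\tn T^i \tn \leq \|M_{T^i}\|_{cb} \leq \|M_{\varphi_i}\|_{cb} \leq K\,.$$
Moreover, since $\varphi_i$ is finitely supported on $G$, $T^i$ has finite $G$-support (as $T^i_g = 0$ whenever $\varphi_i(g) = 0$), so $T^i \in MCF(\Sigma)$ by the observation preceding Example \ref{ell1}.

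It remains to check that $\{T^i\}$ is a Fourier summing net for $\Sigma$. Because the net is uniformly bounded in multiplier norm, I can invoke the criterion recorded at the beginning of Section \ref{Summation}: it suffices to verify that $T^i_g(a) \to a$ for every fixed $g \in G$ and $a \in A$. But this is immediate, since
$$T^i_g(a) = \varphi_i(g)\, a \, \longrightarrow \, 1\cdot a = a\,$$
by the pointwise convergence $\varphi_i(g) \to 1$. Hence $\{T^i\}$ is a bounded Fourier summing net whose members have finite $G$-support, which means that $\Sigma$ has the bounded Fej\'er property.

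There is no real obstacle to overcome here: the essential work has already been done in Corollary \ref{cb-mult}, which transfers scalar cb-multipliers on $G$ to cb-multipliers on $\Sigma$ with control of the cb-norm, and in the $\varepsilon/3$-style reduction that lets us test summing nets only on elementary tensors $a\lambda_\Sigma(g)$. If one wanted a slightly more refined statement (e.g.\ a bound on the norms of the approximating multipliers in terms of the Cowling--Haagerup constant $\Lambda_{cb}(G)$), it would follow from the same argument by taking the infimum over admissible nets $\{\varphi_i\}$.
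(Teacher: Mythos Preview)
Your proof is correct and follows exactly the approach the paper takes: the paper simply states that the result is a direct consequence of Corollary~\ref{cb-mult}, and your argument is just the detailed unpacking of that remark, using the finitely supported net from weak amenability together with the bounded-summing-net criterion from the beginning of Section~\ref{Summation}.
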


\begin{proof} This is a direct consequence of Corollary \ref{cb-mult}.
\end{proof}

When $G$ is weakly amenable, it is easy to see that the Fourier summing net for $\Sigma$ produced in the proof of Theorem \ref{weak-a} will preserve the invariant ideals of $A$, so Proposition \ref{bij} may be applied. 
Alternatively, one could use that $G$ is then known to be exact.
In fact, if $G$ is weakly amenable, then $G$ has Haagerup's and Kraus' approximation property AP \cite{HK}, and if $G$ has the AP, then $G$ is exact (see \cite[Section 12.4]{BrOz}).  
Note that there are groups having the AP without being weakly amenable \cite[p. 373]{BrOz}, and that it follows from the recent work of V. Lafforgue and M. de la Salle \cite{LS} 
 (see also \cite{HL}) that there are examples of exact groups without the AP. 
In this connection, it would be interesting to know whether 
$\Sigma$ will have the Fej{\' e}r property whenever $G$ has the AP, or more generally, whenever $G$ is exact.

\medskip Instead of conditions involving only the group $G$, one may look for conditions on $\Sigma$. In this direction, we have:

\begin{theorem} \label{BFP}
Assume that  $\Sigma$
has the weak approximation property. 
Then $\Sigma$ has the bounded Fej{\'e}r property. 

\smallskip \noindent Moreover, assume that $\Sigma$
has the approximation property, or the half-central weak approximation property. 

\smallskip \noindent Then $\Sigma$ is exact,  while $C_r^*(\Sigma)$ is exact if and only if $A$ is exact. 
We also have that the $E$-invariant ideals of $C^*_r(\Sigma)$
 are in a one-to-one correspondence with the  invariant ideals of $A$.
\end{theorem}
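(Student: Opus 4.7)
The plan is to use the nets witnessing the (weak) approximation property, in combination with Theorem \ref{Phi-thm} applied to an induced regular equivariant representation, to produce an explicit bounded Fourier summing net $\{T^i\}$ of finitely $G$-supported cb-multipliers for $\Sigma$; then, in the stronger cases, to verify by a direct algebraic computation that each $T^i$ preserves the invariant ideals of $A$.

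First, assuming $\Sigma$ has the weak approximation property, with equivariant representation $(\rho, v)$ on $X$ and finitely supported nets $\{\xi_i\}, \{\eta_i\}$ in $X^G$ satisfying properties (a) and (b), I would apply Theorem \ref{Phi-thm} to the induced regular equivariant representation $(\check\rho, \check v)$ on $X^G$ with $x = \xi_i$, $y = \eta_i$ to produce cb-multipliers
\[
T^i(g, a) = \big\langle \xi_i,\, \check\rho(a)\,\check v(g)\,\eta_i \big\rangle = \sum_{h \in G} \big\langle \xi_i(h),\, \rho(a)\, v(g)\,\eta_i(g^{-1}h) \big\rangle
\]
with $\tn T^i \tn \leq \|\xi_i\|\,\|\eta_i\| \leq M$. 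Since $\xi_i$ and $\eta_i$ are finitely supported, the nonzero values of $g \mapsto T^i(g,\cdot)$ are confined to $\mathrm{supp}(\xi_i)\cdot \mathrm{supp}(\eta_i)^{-1}$, so $T^i$ has finite $G$-support; in particular, $T^i \in MCF(\Sigma)$. Condition (b) yields $\lim_i T^i_g(a) = a$ for every $g \in G$ and $a \in A$, and the elementary sufficient criterion recorded at the start of Section \ref{Summation} then shows that $\{T^i\}$ is a bounded Fourier summing net, establishing the first assertion.

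For the second part, I claim that the net $\{T^i\}$ above can be chosen so that each $T^i$ preserves every invariant ideal $J$ of $A$. In the AP case, where $(\rho, v) = (\ell, \alpha)$ on $A$, a direct computation gives
\[
T^i(g, a) = \sum_{h \in G} \xi_i(h)^*\, a\, \alpha_g(\eta_i(g^{-1}h)),
\]
and every term lies in $J$ whenever $a \in J$ since $J$ is a two-sided ideal. In the half-central WAP case, assume WLOG that $\eta_i$ lies in the central part of $X^G$, so each $\eta_i(h) \in Z_X$; since $Z_X$ is invariant under every $v(g)$ (cf.\ \cite{BeCo3}), the relation $\rho(a)\, v(g)\eta_i(g^{-1}h) = \big(v(g)\eta_i(g^{-1}h)\big)\cdot a$ collapses the multiplier to
\[
T^i(g, a) = \Big( \sum_{h\in G} \big\langle \xi_i(h),\, v(g)\eta_i(g^{-1}h) \big\rangle \Big)\, a,
\]
which again belongs to $J$ whenever $a \in J$; the symmetric case where $\xi_i$ is instead central produces $T^i(g,a) = a\, \psi_i(g)$ and is handled analogously.

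With a Fourier summing net preserving the invariant ideals of $A$ in hand, the three remaining conclusions then follow immediately from previously established results: exactness of $\Sigma$ is Corollary \ref{exact}; the equivalence ``$C_r^*(\Sigma)$ exact $\Leftrightarrow$ $A$ exact'' is Proposition \ref{exactcrp}; the bijection between invariant ideals of $A$ and $E$-invariant ideals of $C_r^*(\Sigma)$ is Proposition \ref{bij}. The main obstacle is precisely the ideal-preservation step: for plain WAP, the factor $\rho(a)$ appearing in $T^i(g,a)$ can mix $a$ with the vectors $\xi_i(h)$, $v(g)\eta_i(g^{-1}h)$ in a way that destroys membership in $J$, and this is exactly what forces the restriction to AP or half-centrality in the second part of the statement.
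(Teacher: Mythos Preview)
Your proposal is correct and follows essentially the same route as the paper: apply Theorem~\ref{Phi-thm} to the induced regular equivariant representation $(\check\rho,\check v)$ with the finitely supported nets $\xi_i,\eta_i$ to obtain finitely $G$-supported cb-multipliers bounded by $M$, then use condition (b) and the boundedness criterion to get the Fej\'er property; for the second part you compute the explicit form of $T^i_g$ in the AP and half-central cases exactly as the paper does (the paper phrases the half-central case by reference to Example~\ref{funct-ex} rather than writing out the computation, but the content is identical) and then invoke Corollary~\ref{exact}, Proposition~\ref{exactcrp}, and Proposition~\ref{bij}.
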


\begin{proof}
Let $X, (\rho, v), M, \, \{\xi_i\}$ and $\{\eta_i\}$ be as in the definition 
   of the weak approximation property, each $\xi_i$ $($resp.\ $\eta_i)$   being chosen in $X^G$ with finite support {\rm supp}$(\xi_i)$  $($resp.\ {\rm supp}$(\eta_i)$$)$.
   
 \medskip \noindent For each $i$, define $T^i: G \times A \to A$ by  
$\, T^i (g,a) =  \big\langle \xi_i\,,\,\check\rho(a)\check{v}(g)\eta_i \big\rangle\,,$ $\textup{that is,}$
\begin{equation}\label{Ti} 
T^i (g,a) = \sum_{h\in G} \big\langle \xi_i(h)\, , \,  \rho(a) \,v(g)\eta_i(g^{-1}h)\big\rangle \,, \quad g \in G, \, a\in G\, .
\end{equation}
From Theorem \ref{Phi-thm}, see also Example \ref{reg-equiv}, we know that  $T^i \in MA(\Sigma)$ and satisfies 
$\tn T^i \tn \leq  \, \|\xi_i\| \, \|\eta_i\| $ for each $i$. Since $\|\xi_i\| \, \|\eta_i\| \leq M$ for each $i$, we see that $\{ T_i\} $ is bounded.

\medskip \noindent Moreover, we have $\lim_i \| T^i(g,a)-a\| = 0$ for each $g \in G$ by assumption. Finally, equation (\ref{Ti}) gives that each $T^i$ has finite $G$-support equal to  \,{\rm  supp}$(\xi_i)\cdot \big(${\rm supp}$(\eta_i) \big)^{-1}$. Altogether, this shows that $\{T^i\}$  is a 
bounded Fourier summing net for $\Sigma$ such that each $T^i$ has finite $G$-support, and the first assertion is proven.  

\medskip Now, assume first that  $\Sigma$
has the half-central weak approximation property, which means that $\{\xi_i\}$ or $\{\eta_i\}$ may be chosen to lie in the central part of $X^G$. As shown in Example \ref{funct-ex},  each $T^i$ is then a multiplier obtained by multiplication (from the left or from the right) with a function from $G$ to $A$. It is therefore obvious that $\{T^i\}$ preserves (all) ideals of $A$.

\medskip Next, assume that  $\Sigma$
has the  approximation property, that is, we have $(\rho, v) = (\ell, \alpha)$ and $\{\xi_i\}, \{\eta_i\} \subset A^G$. Then, for every $i$ and every $ g \in G, \, a \in A$, we have 
$$T^i_g(a) = \sum_{h\in G} \xi_i(h)^* \,\, a \,\, \alpha_g\big(\eta_i(g^{-1}h)\big) \,,$$
and it is evident that $\{T^i\}$  preserves (all) ideals of $A$ also in this case.

\medskip Hence, the second part of the theorem follows from  Corollary \ref{exact}, 
 Proposition \ref{exactcrp} and Proposition \ref{bij}.

\end{proof}

Note that when $\Sigma$ has the approximation property, 
the first and the final assertions of 
Theorem \ref{BFP}  are closely related to \cite[Propositions 4.9 and 4.10]{Ex}, since  $C^*_r(\Sigma)$ may be written as the reduced sectional algebra of a Fell bundle over $G$ \cite{ExLa}. Examples of systems (with $\sigma$ trivial) satisfying a strong version of the approximation property (called amenability) may be found in \cite[Chapters 4 and 5]{BrOz} (see also \cite{AD1, AD2}). 
For such amenable systems,
the third assertion of Theorem \ref{BFP}  is already known, cf.\ \cite[Theorem 4.3.4, part (3)]{BrOz}. 

\medskip
 \begin{example}
 Assume that $G$ is exact, $H$ is an amenable subgroup of $G$,  $A=\ell^\infty(G/H)$, \\ $\alpha$ is the natural action of $G$ on $A$ and $\sigma$ takes values in $\Toro$. Then it is shown in \cite[Example 5.19]{BeCo3} that $\Sigma = (\ell^\infty(G/H), G, \alpha, \sigma)$ has the weak approximation property and one may therefore apply Theorem \ref{BFP} to produce a bounded Fej{\'e}r summing net for $\Sigma$. Moreover, it can be checked\footnote{One has then to have a closer look at the proof of \cite[Proposition 5.15]{BeCo3}: using the notation used in this proof, one checks easily that  if $\xi$  lies in the  central part of $X^G$, then $\xi'$ defined by $\xi'(g)= \xi(g) + N, \, g \in G,$ lies in the central part of $(X'_B)^G$. } that $\Sigma$ has the central approximation property and the second part of Theorem \ref{BFP} also applies. Alternatively, one could use here that $G$ is assumed to be exact.
  
 \medskip  \hfill $\square$
   \end{example}

\medskip In \cite{BeCo2}, we discussed analogs of Abel-Poisson summation of Fourier series in reduced twisted group C$^*$-algebras. In $C_r^*(\Sigma)$, the only case that is straightforward to handle is when $G=\Relativi^n$. 
Indeed, similarly to \cite[Theorem 5.7]{BeCo2}, we have:

\begin{theorem}  Let $ G=\Relativi^n$ for some $n \in \Naturali$. For $p \in \{1, 2\}$,
    let $| \cdot |_{p}$ denote the usual $p$-norm on $G$ and let $L(\cdot)$ denote either $| \cdot |_{1} \, ,\,  | \cdot |_{2}$ or 
    $| \cdot |_{2}^2\,$. 
    
 \smallskip \noindent For each $r \in (0,1)$, let $\varphi_{r} = r^{L}$ be the function on $G$ defined by $\varphi_r(g) = r^{L(g)}$ and set $T^r = T^{\varphi_r}$, so 
 $$T^r(g, a) = r^{L(g)}\, a\, , \quad g\in G, \, a\in A\,.$$
      
 \smallskip \noindent Then $\{T^{r}\}_{r \to 1^{-}}$ is a bounded Fourier summing net for $\Sigma$. \end{theorem}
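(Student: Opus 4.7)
The plan is to reduce this to Corollary \ref{posdef} by verifying that each $\varphi_r=r^L$ is positive definite on $\mathbb{Z}^n$, then invoking Example \ref{ell1} to get membership in $MCF(\Sigma)$, and finally using the bounded-net criterion stated at the beginning of Section \ref{Summation}.

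First I would check positive definiteness of $\varphi_r$ on $\mathbb{Z}^n$ in each of the three cases. For $L=|\cdot|_1$ and $L=|\cdot|_2^2$, the function $\varphi_r$ factors as a tensor product $\prod_{k=1}^n \psi_r(g_k)$, with $\psi_r(m)=r^{|m|}$ or $\psi_r(m)=r^{m^2}$ respectively, so it suffices to handle $n=1$. In the first case $\psi_r$ is the sequence of Fourier coefficients of the Poisson kernel $\theta\mapsto (1-r^2)/(1-2r\cos\theta+r^2)\ge 0$, and in the second case $\psi_r$ gives the Fourier coefficients of a Jacobi theta function, which is positive on $\mathbb{T}$ (as can be verified via Poisson summation, giving a sum of positive Gaussians). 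Tensoring over the $n$ coordinates preserves positive definiteness. For $L=|\cdot|_2$, I would instead use the fact that $e^{-t|\xi|}$ is positive definite on $\mathbb{R}^n$ for every $t>0$ (being the Fourier transform of the $n$-dimensional Poisson kernel), and its restriction to $\mathbb{Z}^n\subset\mathbb{R}^n$ is therefore also positive definite; applied with $t=-\log r>0$, this gives positive definiteness of $\varphi_r$.

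Once $\varphi_r$ is positive definite, Corollary \ref{posdef} yields $T^r=T^{\varphi_r}\in M_0A(\Sigma)$ with
$$\tn T^r\tn=\|M_{T^r}\|_{cb}=\varphi_r(0)=1,$$
so the net $\{T^r\}$ is uniformly bounded. Moreover, for each of the three choices of $L$ and each $r\in(0,1)$, the function $\varphi_r$ lies in $\ell^1(\mathbb{Z}^n)$ because $\sum_{g\in\mathbb{Z}^n} r^{L(g)}<\infty$ (straightforward geometric/Gaussian estimate). Example \ref{ell1} then gives $T^r\in MCF(\Sigma)$.

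It remains to verify the convergence $\lim_{r\to 1^-}\|M_{T^r}(x)-x\|=0$ for every $x\in C_r^*(\Sigma)$. By the $\varepsilon/3$-remark following the definition of a bounded Fourier summing net, since $\sup_r\tn T^r\tn=1$, it is enough to check that $T^r_g(a)\to a$ for every fixed $g\in G$ and $a\in A$. But $T^r_g(a)=r^{L(g)}\,a$, and $r^{L(g)}\to 1$ as $r\to 1^-$ since $L(g)$ is finite, so this is immediate. The main (and only) nontrivial step is thus Step 1, the classical positive definiteness verification, which is case-by-case but standard.
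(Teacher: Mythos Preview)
Your proof is correct and follows essentially the same route as the paper: positive definiteness of $\varphi_r$ feeds into Corollary \ref{posdef} to get $\tn T^r\tn=1$, $\ell^1$-membership of $\varphi_r$ combined with Example \ref{ell1} gives $T^r\in MCF(\Sigma)$, and pointwise convergence of $\varphi_r$ to $1$ plus the $\varepsilon/3$-criterion finishes. The only difference is that the paper cites \cite[Theorem 5.7]{BeCo2} for the positive definiteness step, whereas you spell out the classical case-by-case verification.
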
 
 \begin{proof} As pointed out in the proof of \cite[Theorem 5.7]{BeCo2},  $\varphi_{r}$ is a normalized 
 positive definite function on $G$ for each $r \in (0,1)$. Hence,   Corollary \ref{posdef}  gives that $\{T^r\}$ is a bounded net  in $MA(\Sigma)$. Moreover, 
 each  $\varphi_{r}$ lies in $\ell^1(G)$, so  Example \ref{ell1} gives 
 that $T^r \in MCF(\Sigma)$ for each $r \in (0,1)$.   
As 
$\varphi_{r}$  converges pointwise to 1 when $r \to 1^{-}$, we have 
$$\lim_{r \to 1^{-}}\,  \| T^r(g,a) -a \| = \lim_{r \to 1^{-}} \, | \varphi_r(g) -1| \, \|a \| =  0$$ 
for each $g \in G, \, a\in A$. Hence the result follows.

\end{proof}

To show versions of the Abel-Poisson summation theorem for systems associated with other kind of groups, such as Coxeter groups or Gromov hyperbolic groups, the following result, analogous to \cite[Proposition 5.8]{BeCo2}, might prove to be helpful (as in the case $A=\Complessi$ discussed in \cite[Section 5]{BeCo2}). We will give an application of it in the next section.
\begin{proposition} \label{BFSN}
Let $\{\psi_i\}$ be a net of functions from $G$ to $A$ converging pointwise to $1$ and
consider the maps $L^i : G \times A \to A$ given by  $L^i(g,a) = \psi_i(g) \, a$.

\smallskip \noindent Assume that 
for each $i$ there exists $\kappa_i: G \to [1, \infty)$ such  that 
\begin{itemize}
\item $\Sigma$ has the 
$A^\Sigma_{\kappa_i}$-decay property with decay constant $C_i$,  
\item $\psi_i  \in \ell^\infty_{\kappa_i}(G,A)$, so $K_i=\|\psi_i \kappa_i\|_{\infty} < \infty$.

\end{itemize}

\smallskip  \noindent Then $\{L^i\} \subset MCF(\Sigma)$.
Moreover, if $\sup_i \, C_i K_i < \infty$ or, more generally, if $\{ L^i\}$ is bounded, then 
$\{L^i\}$ is a 
bounded Fourier summing net for $\Sigma$.
\end{proposition}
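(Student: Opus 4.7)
The plan is to derive both assertions essentially by bookkeeping, using Proposition~\ref{CK} once for each index $i$ and then applying the elementary $\varepsilon/3$ criterion recalled at the start of this section for a bounded net in $MCF(\Sigma)$ to be a Fourier summing net.

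For the first assertion, I would simply observe that for each fixed $i$ the two bulleted hypotheses of the proposition are verbatim the hypotheses of Proposition~\ref{CK} applied with $\psi = \psi_i$ and $\kappa = \kappa_i$. Applying that proposition yields $L^i = L^{\psi_i} \in MCF(\Sigma)$ together with the quantitative bound $\tn L^i \tn \leq C_i K_i$. In particular, the assumption $\sup_i C_i K_i < \infty$ forces $\sup_i \tn L^i \tn < \infty$, so boundedness of $\{L^i\}$ holds automatically in that case, which is what justifies the phrase ``more generally'' in the statement.

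For the second assertion, assuming $\{L^i\}$ is bounded, I would invoke the criterion recorded earlier in this section: a bounded net in $MCF(\Sigma)$ is a Fourier summing net as soon as $\lim_i T^i_g(a) = a$ in norm for every $g \in G$ and every $a \in A$. Here $L^i_g(a) = \psi_i(g)\,a$, so the inequality $\|L^i_g(a) - a\| \leq \|\psi_i(g) - 1\|\,\|a\|$, combined with the hypothesis that $\psi_i(g) \to 1$ in $A$ for each $g$, immediately gives $L^i_g(a) \to a$ in norm. This verifies the criterion and concludes the argument.

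I do not foresee any real obstacle: the proof is a two-step concatenation of facts already established in the paper. The only mild point worth emphasizing is that ``converges pointwise to $1$'' should be read as $\psi_i(g) \to 1_A$ in the norm of $A$ for each $g \in G$, which is exactly what the bounded-net criterion demands.
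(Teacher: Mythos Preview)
Your proposal is correct and follows essentially the same route as the paper: invoke Proposition~\ref{CK} for each index $i$ to get $L^i \in MCF(\Sigma)$ with $\tn L^i \tn \leq C_i K_i$, then use the $\varepsilon/3$ criterion together with the pointwise convergence $\psi_i(g)\,a \to a$ to conclude. The paper's proof is just a terser rendering of exactly these two steps.
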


\begin{proof}
According to Proposition \ref{CK}, the first two conditions ensure that $L^i \in MCF(\Sigma)$ for each $i$. Moreover, as $ \tn L^i \tn \leq  C_i K_i \, $ for each $i$
and $\lim_i \, L^i_g(a) = \lim_i\, \psi_i(g) \, a = a$ for each $g\in G$ and $a\in A$, the final assertion is clear.

\end{proof}

Assume $G$ is a Coxeter group or a Gromov hyperbolic group and let $L$ denote the algebraic length function on $G$ associated with some finite set of generators for $G$. It is known that  $\{\psi_r\}_{r\in(0,1)}$ with $\psi_r =r^L$ gives a bounded net in $M_0A(G)$ (cf. \cite{CCJJV} and \cite{Oz}). It therefore follows  from Corollary \ref{cb-mult}
that the net $L^r$ associated with $\{\psi_r\}$ (as in Proposition \ref{BFSN}) is a bounded net in $M_0A(\Sigma)$, hence in $MA(\Sigma)$. In order to  apply Proposition \ref{BFSN} and deduce that $\{L^{r}\}$ is a bounded Fourier summing net for $\Sigma$,  it suffices to show that $\Sigma$ has the $A_{\kappa_r}^\Sigma$-decay property for each $r \in (0,1)$, where $\kappa_r=r^{-L}$. Note that $G$ is $\kappa_r$-decaying (because $G$ has polynomial H-growth w.r.t.\ $L$, cf.\ \cite[Example 3.12]{BeCo3}). However we do not know if the $A_{\kappa_r}^\Sigma$-decay property may be deduced from this, except when $A$ is commutative and $\alpha$ is trivial (see Corollary \ref{L-proper}).

\medskip We also mention a result closely related to Proposition \ref{BFSN}:

\begin{proposition} \label{BFP2}
Let $\{\psi_i\}$ be a net of functions from $G$ to $A$ converging pointwise to $1$ and
consider the maps $L^i : G \times A \to A$ given by  $L^i(g,a) = \psi_i(g) \, a$.

\smallskip \noindent Assume that for each $i$ the following conditions hold: 

\begin{itemize}
\item $L^i \in MA(\Sigma)$ with $\tn L^i \tn = 1$,
\item there exists $\kappa_i: G \to [1, \infty)$ such  that 
 $\Sigma$ has the 
$A^\Sigma_{\kappa_i}$-decay property   \\ and $\psi_i \kappa_i \in c_0(G,A)$.
\end{itemize}

\smallskip  \noindent Then $\Sigma$ has the bounded Fej\'er property.
\end{proposition}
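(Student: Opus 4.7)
The plan is to truncate each $L^i$ to a sufficiently large finite subset of $G$ and show that the truncations form a bounded Fourier summing net with finite $G$-support. First I would verify that $\{L^i\}$ is itself a bounded Fourier summing net (ignoring the finite-support requirement for the moment): since $\psi_i \kappa_i \in c_0(G,A) \subseteq \ell^\infty(G,A)$, each $\psi_i$ lies in $\ell^\infty_{\kappa_i}(G,A)$, so Proposition \ref{CK} places $L^i$ in $MCF(\Sigma)$, and combining this with $\tn L^i\tn = 1$ and the pointwise convergence $\psi_i \to 1$ gives $\lim_i \|M_{L^i}(x) - x\| = 0$ for every $x \in C_r^*(\Sigma)$ by the $\varepsilon/3$-argument recalled before Theorem \ref{Fej}.

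Next, let $C_i$ denote the decay constant of the $A_{\kappa_i}^\Sigma$-decay property. For each $n \in \Naturali$, since $\psi_i\kappa_i \in c_0(G,A)$, I can choose a finite subset $F_{i,n} \subseteq G$ with
\begin{equation*}
\sup_{g \notin F_{i,n}} \|\psi_i(g)\,\kappa_i(g)\| \, \leq \, \frac{1}{n\,C_i}\, ,
\end{equation*}
and then set $T^{i,n}(g,a) = \chi_{F_{i,n}}(g)\,\psi_i(g)\,a$, which has finite $G$-support. Writing $L^i - T^{i,n} = L^{(1-\chi_{F_{i,n}})\psi_i}$ and applying Proposition \ref{CK} to the truncated function $(1-\chi_{F_{i,n}})\psi_i$ yields $\tn L^i - T^{i,n}\tn \leq 1/n$, whence $\tn T^{i,n}\tn \leq \tn L^i\tn + 1/n \leq 2$.

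I would then regard $\{T^{i,n}\}$ as indexed by the product directed set of pairs $(i,n)$ with the coordinatewise order. For $x \in C_r^*(\Sigma)$, the estimate
\begin{equation*}
\|M_{T^{i,n}}(x) - x\| \, \leq \, \tn L^i - T^{i,n}\tn\,\|x\| + \|M_{L^i}(x) - x\| \, \leq \, \frac{\|x\|}{n} + \|M_{L^i}(x) - x\|
\end{equation*}
shows, given $\delta > 0$, that the left-hand side is below $\delta$ as soon as $n$ is chosen with $\|x\|/n < \delta/2$ and $i$ is large enough (by the first step) that $\|M_{L^i}(x) - x\| < \delta/2$. Hence $\{T^{i,n}\}$ is a bounded Fourier summing net of finitely supported multipliers for $\Sigma$, which is exactly the bounded Fej\'er property.

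The delicate point, and the reason the hypothesis $\psi_i\kappa_i \in c_0(G,A)$ is essential rather than mere membership in $\ell^\infty_{\kappa_i}(G,A)$, is that the tail-cut error $\tn L^i - T^{i,n}\tn$ must be made arbitrarily small while $\tn T^{i,n}\tn$ stays uniformly bounded. Proposition \ref{CK} applied to $(1-\chi_{F_{i,n}})\psi_i$ is precisely what converts the pointwise $c_0$-decay of $\psi_i\kappa_i$ into a quantitative operator-norm bound on the discarded tail, and the freedom to shrink this tail by taking $n$ large independently of $i$ is what yields the uniform control.
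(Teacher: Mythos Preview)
Your argument is correct and follows precisely the Haagerup truncation strategy that the paper invokes by reference (to \cite[Theorem 7.1]{BeCo2} and \cite[Theorem 1.8]{Haa1}), with Proposition~\ref{CK} playing the role of \cite[Proposition 4.8]{BeCo2}. The only cosmetic difference is that you index the truncations over the product directed set $(i,n)$ rather than performing a diagonal choice, but this is immaterial.
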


\begin{proof} The proof is  a verbatim adaptation of the proof of \cite[Theorem 7.1]{BeCo2}
(that  itself is an adaptation of \cite[Theorem 1.8]{Haa1}),
now appealing to Proposition \ref{CK} instead of invoking 
 \cite[Proposition 4.8]{BeCo2}.
 
 \end{proof}

 Note that if we stick to normalized (scalar-valued) positive definite functions $\psi_i$ on $G$ in the above assumptions, then $G$ must have the Haagerup property (see \cite{CCJJV} or \cite[Section 12.2]{BrOz}). But allowing $A$-valued functions might be useful to handle other kind of situations.  

\section{The almost trivial case}\label{Trivial}

In this final section, we take up the issue of finding examples of  weight functions $\kappa$ on $G$ such that $\Sigma$ has the $A^\Sigma_\kappa$-property in the ``almost trivial'' case where  $A=C(X)$ is commutative and $\alpha$ is trivial.
In such a  situation, $C^*_r(\Sigma)=C^*_r(C(X), G, \rm{id}, \sigma)$ is a (unital discrete) reduced central twisted transformation group algebra, and the variety of C$^*$-algebras contained in this class is larger than one might imagine at a first thought;  see for example \cite{EW} and note that any twisted reduced group C$^*$-algebra associated with  a central group extension belongs to this class.
 
\medskip  We will use the following notation. 

\smallskip \noindent For $a \in A$ and $\omega \in S(A)$ (the state space of $A$), we set $\|a\|_\omega = \omega(a^*a)^{1/2}$. 

\smallskip \noindent Let $\xi \in A^G$. For each $\omega \in S(A)$, we define $|\xi|_\omega : G\to [0, \infty)$ by $$|\xi|_\omega(g) = \|\xi(g)\|_\omega = \omega(\xi(g)^*\xi(g))^{1/2}\, ,\quad g\in G\, .$$
Note that $|\xi|_\omega \in \ell^2(G)$ since $$\| \, |\xi|_\omega \|_2^2=  \sum_{g\in G} \omega\big(\xi(g)^*\xi(g)\big)= \omega\Big(\sum_{g\in G} \xi(g)^* \xi(g)\Big) < \infty\, .$$

 \smallskip \noindent  Letting $\|\xi\|$ denote the norm of $\xi$ in $A^G$, we have
\begin{align*}
\|\xi\|= \|\sum_{g\in G} \xi(g)^* \xi(g)\|^{1/2} &= \sup_{\omega\in S(A)} \omega\big(\sum_{g \in G} (\xi(g)^* \xi(g))\big)^{1/2}
= \sup_{\omega\in S(A)} \Big(\sum_{g\in G} \omega(\xi(g)^* \xi(g))\Big)^{1/2} \\
& = \sup_{\omega\in S(A)} \Big( \sum_{g\in G} \|\xi(g)\|^2_\omega \Big)^{1/2} 
= \sup_{\omega\in S(A)} \| \, |\xi|_\omega \, \|_2\, .
\end{align*}
Similarly, if $P(A)$ denotes the pure state space of $A$, we  have
$\|\xi\|= \sup_{\omega\in P(A)} \| \, |\xi|_\omega \, \|_2\, $.

\bigskip 

\begin{lemma}\label{commineq}
Assume $A$ is commutative and $\omega$ is a pure state of 
$A$. Let $f\in C_c(\Sigma)$ 
and assume it takes values in $A^\alpha=\{ a\in A\mid \alpha_g(a) = a \, \, \text{for all}\, \,  g \in G\}$.

\smallskip \noindent Then, for all $\xi \in A^G$,
we have
$$\| \, |\Lambda(f)\xi|_\omega \, \|_2 
\leq \| \, |f|_\omega * |\xi|_\omega \, \|_2 \,.
$$
\end{lemma}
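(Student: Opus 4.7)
The plan is to compute $[\Lambda(f)\xi](h)$ explicitly, apply $\omega$ (which, since $A$ is commutative and $\omega$ is pure, is a character), and then recognize the resulting scalar sum as a pointwise value of the convolution $|f|_\omega * |\xi|_\omega$.

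First, I would recall from the preliminaries that for $\xi \in A^G$, $g \in G$ and $h \in G$,
\begin{equation*}
(\tilde\ell(a)\tilde{\lambda}_\ell(g)\xi)(h) = \alpha_h^{-1}(a)\,\alpha_h^{-1}(\sigma(g,g^{-1}h))\,\xi(g^{-1}h),
\end{equation*}
so that
\begin{equation*}
[\Lambda(f)\xi](h) = \sum_{g\in\mathrm{supp}(f)} \alpha_h^{-1}(f(g))\,\alpha_h^{-1}(\sigma(g,g^{-1}h))\,\xi(g^{-1}h).
\end{equation*}
Since by assumption $f(g) \in A^\alpha$ for each $g$, the factor $\alpha_h^{-1}(f(g))$ simplifies to $f(g)$. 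Writing $v_g(h) := \alpha_h^{-1}(\sigma(g,g^{-1}h)) \in \U(A)$, we are left with a finite sum $[\Lambda(f)\xi](h) = \sum_g f(g)\,v_g(h)\,\xi(g^{-1}h)$.

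Next, I would use the fact that since $A$ is commutative and $\omega$ is pure, $\omega$ is a character of $A$ (i.e.\ a $*$-homomorphism $A\to\Complessi$). Expanding $[\Lambda(f)\xi](h)^*[\Lambda(f)\xi](h)$ as a double sum (using commutativity to rearrange factors) and applying $\omega$ termwise, I get
\begin{equation*}
\omega\bigl([\Lambda(f)\xi](h)^*[\Lambda(f)\xi](h)\bigr) = \Bigl|\sum_g \omega(f(g))\,\omega(v_g(h))\,\omega(\xi(g^{-1}h))\Bigr|^2,
\end{equation*}
so $\|[\Lambda(f)\xi](h)\|_\omega = \bigl|\sum_g \omega(f(g))\,\omega(v_g(h))\,\omega(\xi(g^{-1}h))\bigr|$. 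Now $|\omega(v_g(h))|\le 1$ (state value of a unitary), while for any $a\in A$ one has $|\omega(a)|^2 = \overline{\omega(a)}\omega(a) = \omega(a^*a) = \|a\|_\omega^2$; hence $|\omega(f(g))| = |f|_\omega(g)$ and $|\omega(\xi(g^{-1}h))| = |\xi|_\omega(g^{-1}h)$. The triangle inequality then yields
\begin{equation*}
\|[\Lambda(f)\xi](h)\|_\omega \le \sum_{g\in G} |f|_\omega(g)\,|\xi|_\omega(g^{-1}h) = (|f|_\omega * |\xi|_\omega)(h).
\end{equation*}

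Finally, squaring and summing over $h \in G$ gives
\begin{equation*}
\| \, |\Lambda(f)\xi|_\omega \, \|_2^2 = \sum_{h\in G}\|[\Lambda(f)\xi](h)\|_\omega^2 \le \sum_{h\in G}\bigl((|f|_\omega*|\xi|_\omega)(h)\bigr)^2 = \| \, |f|_\omega * |\xi|_\omega \, \|_2^2,
\end{equation*}
and taking square roots yields the claim. The only genuinely delicate point is the verification that $\omega$ may be distributed over the double sum for $[\Lambda(f)\xi](h)^*[\Lambda(f)\xi](h)$; this is where commutativity of $A$ and the character property of the pure state $\omega$ are both used essentially, and it also explains why the hypothesis $f(g)\in A^\alpha$ is needed — without it the factors $\alpha_h^{-1}(f(g))$ would obstruct factoring $\omega$ into a product of terms depending separately on $f$ and on $\xi$.
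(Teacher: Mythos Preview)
Your proof is correct and follows essentially the same route as the paper's: compute $[\Lambda(f)\xi](h)$, use $f(g)\in A^\alpha$ to drop the $\alpha_h^{-1}$ on $f(g)$, exploit that a pure state on a commutative $A$ is multiplicative so that $\|abc\|_\omega=\|a\|_\omega\|b\|_\omega\|c\|_\omega$ and $\|u\|_\omega=1$ for unitaries, apply the triangle inequality to obtain the pointwise bound $|\Lambda(f)\xi|_\omega(h)\le(|f|_\omega*|\xi|_\omega)(h)$, and sum over $h$. The only cosmetic difference is that you first rewrite $\|a\|_\omega=|\omega(a)|$ and work with scalar sums, whereas the paper applies the triangle inequality and multiplicativity directly to the seminorm $\|\cdot\|_\omega$; the arguments are equivalent.
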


\begin{proof}

Set $E = {\rm supp}(f)$ and let $h \in G$. Note the assumption on $f$ implies that $\alpha_h^{-1}(f(g)) = f(g)$ for all $g \in E$. Moreover, note that $\|u\|_\omega = 1$ for any $u \in \U(A)$. Using the triangle inequality for $\|\cdot\|_\omega$ and the fact that $\omega$ is multiplicative, we then get
\begin{align*}
| \Lambda(f)\xi|_\omega(h) 
& = \Big\|\sum_{g \in E} \alpha_h^{-1}(f(g)) \alpha_h^{-1}(\sigma(g,g^{-1}h))\xi(g^{-1}h)\Big\|_\omega \\
& \leq \sum_{g \in E} \|f(g)\|_\omega \, \| \alpha_h^{-1}(\sigma(g,g^{-1}h)))\|_\omega \, \|\xi(g^{-1}h)\|_\omega\\
& = \sum_{g \in E} \|f(g)\|_\omega \, \|\xi(g^{-1}h)\|_\omega
= \sum_{g \in E} |f|_\omega(g) \; |\xi|_\omega(g^{-1}h) \\
& = (|f|_\omega * |\xi|_\omega) (h) \,,
\end{align*}
and the desired inequality follows immediately.

\end{proof}

It is conceivable that Lemma \ref{commineq} holds without having to assume that $f$ takes values in $A^\alpha$ if its conclusion is changed to: `` For all $\xi \in A^G$, we have $$\| \, |\Lambda(f)\xi|_\omega \, \|_2 
\leq \| \, |f^\alpha|_\omega * |\xi|_\omega \, \|_2 \,, $$ where $f^\alpha$ is defined by $f^\alpha(g) = \alpha_g^{-1}(f(g)), \, g \in G$." Sorrily, we have so far not been able to establish this inequality. With such a more general result at hand, we would not have to assume that $f$ takes values in $A^\alpha$ in the next proposition, and our results in the sequel would all also be true for a non-trivial $\alpha$.

\begin{proposition} \label{commu}
Assume $A$ is commutative and $G$ is $\kappa$-decaying with decay constant $C$ for some $\kappa: G \to [1, \infty)$.
Let $f\in C_c(\Sigma)$ and assume it takes values in $A^\alpha$.
Then 
$$\|\Lambda_\Sigma(f)\| \leq C \, \|f\|_{\alpha, \kappa}\,.$$
\end{proposition}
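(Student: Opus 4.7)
The plan is to combine the norm-characterization via pure states with Lemma \ref{commineq} and the $\kappa$-decay inequality for $G$, reducing the operator-norm estimate on $\Lambda_\Sigma(f)$ to the scalar-valued convolution estimate.

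First I would replace $\Lambda_\Sigma(f)$ by $\Lambda(f)$, since the unitary equivalence via $J$ gives $\|\Lambda_\Sigma(f)\| = \|\Lambda(f)\|$, and it is then enough to bound $\|\Lambda(f)\xi\|$ uniformly for $\xi \in A^G$ with $\|\xi\| \leq 1$. Using the characterization
$$\|\eta\| \;=\; \sup_{\omega \in P(A)}\, \| \, |\eta|_\omega \, \|_2 \qquad (\eta \in A^G)$$
recorded just before Lemma \ref{commineq}, the task reduces to bounding $\| \, |\Lambda(f)\xi|_\omega \, \|_2$ for each pure state $\omega$ of $A$.

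Fix such an $\omega$. Since $f$ takes values in $A^\alpha$, Lemma \ref{commineq} applies and yields
$$\| \, |\Lambda(f)\xi|_\omega \, \|_2 \;\leq\; \| \, |f|_\omega * |\xi|_\omega \, \|_2 \,.$$
Now $|f|_\omega$ lies in $C_c(G)$ and $|\xi|_\omega$ lies in $\ell^2(G)$, so by the assumption that $G$ is $\kappa$-decaying with decay constant $C$ we have
$$\| \, |f|_\omega * |\xi|_\omega \, \|_2 \;\leq\; C\, \| \, |f|_\omega \, \|_{2,\kappa} \, \| \, |\xi|_\omega \, \|_2 \,.$$

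It remains to dominate the two right-hand factors by $\|f\|_{\alpha,\kappa}$ and $\|\xi\|$ respectively. For the second this is immediate, since
$$\| \, |\xi|_\omega \, \|_2^{\,2} \;=\; \omega\!\Big(\sum_{g\in G} \xi(g)^*\xi(g)\Big) \;\leq\; \Big\|\sum_{g\in G} \xi(g)^*\xi(g)\Big\| \;=\; \|\xi\|^2.$$
For the first, commutativity of $A$ together with $f(g) \in A^\alpha$ (so $\alpha_g^{-1}(f(g)^*f(g)) = f(g)^*f(g)$) gives
$$\| \, |f|_\omega \, \|_{2,\kappa}^{\,2} \;=\; \omega\!\Big(\sum_{g\in G} f(g)^*f(g)\, \kappa(g)^2\Big) \;\leq\; \Big\|\sum_{g\in G} \alpha_g^{-1}\!\big(f(g)^*f(g)\big)\, \kappa(g)^2\Big\| \;=\; \|f\|_{\alpha,\kappa}^{\,2}.$$
Combining these estimates and taking the supremum over $\omega \in P(A)$ and over $\xi$ in the unit ball of $A^G$ yields $\|\Lambda(f)\| \leq C\, \|f\|_{\alpha,\kappa}$, which is the desired conclusion.

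The main conceptual step is really Lemma \ref{commineq}, which converts the operator-valued estimate into a scalar-valued one along each pure state; the rest is just putting the pieces together. The only technical nuisance is keeping track of where the hypothesis $f(G) \subset A^\alpha$ is used — it appears both in applying Lemma \ref{commineq} and in reducing $\|f\|_{\alpha,\kappa}$ to a manageable form — so the argument would genuinely need a stronger version of Lemma \ref{commineq} (as indicated by the authors in the remark following it) in order to drop this hypothesis.
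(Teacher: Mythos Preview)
Your proof is correct and follows essentially the same route as the paper: reduce to $\Lambda(f)$ via the unitary $J$, use the pure-state norm formula $\|\eta\| = \sup_{\omega\in P(A)}\| \, |\eta|_\omega\,\|_2$, apply Lemma \ref{commineq} and the $\kappa$-decay inequality for $G$, and finally use $f(G)\subset A^\alpha$ to identify the resulting bound with $\|f\|_{\alpha,\kappa}$. Your write-up is in fact a bit more explicit than the paper's about where the hypothesis $f(G)\subset A^\alpha$ enters, which is a plus.
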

\begin{proof}
Let  $\xi \in A^G$. Then, using Lemma \ref{commineq} and the $\kappa$-decay of $G$, we get
\begin{align*}
\|\Lambda(f)\xi\| & 
 = \sup_{\omega\in P(A)} \| \, |\Lambda(f)\xi|_\omega\, \|_2 \\
& \leq  \sup_{\omega\in P(A)} \|\, |f|_\omega * |\xi|_\omega \, \|_2 \\
& \leq C  \sup_{\omega\in P(A)} \|\, |f|_\omega \|_{2, \kappa} \, \, \| \, |\xi|_\omega \, \|_2 \\
& =  C  \sup_{\omega\in P(A)} \|\, |f\kappa|_\omega \|_{2} \,\,  \| \, |\xi|_\omega \, \|_2 \\
& \leq  C\,    \|\, f\kappa \, \| \, \| \,\xi \, \|  = C\,   \|\, f \, \|_{\alpha, \kappa} \, \| \,\xi \, \|\, ,
\end{align*}
the final equality being  due to the fact that $f$ takes values  in $A^\alpha$.
 
\medskip \noindent This shows that $\|\Lambda(f)\| \leq  C\,   \|\, f \, \|_{\alpha, \kappa}\, $. As $\|\Lambda_\Sigma(f)\|=\|\Lambda(f)\|$, the assertion is proven.
\end{proof}
\begin{corollary} \label{comdecay} Assume $A$ is commutative,   $\alpha$ is trivial and  $G$ is $\kappa$-decaying for some $\kappa: G \to [1, \infty)$. Then $\Sigma= (A, G, \rm{id}, \sigma)$ is $A^\Sigma_\kappa$-decaying.
\end{corollary}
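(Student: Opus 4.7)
The plan is to verify the two defining conditions of the $A^\Sigma_\kappa$-decay property directly, using Proposition \ref{commu} for the norm inequality and the definition of $A^\Sigma_\kappa$ for the decay-at-infinity condition.

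First, I would handle the norm estimate. Since $\alpha = \mathrm{id}$, we have $A^\alpha = A$, so \emph{every} $f \in C_c(\Sigma)$ automatically takes values in $A^\alpha$. Proposition \ref{commu} then applies without restriction and yields
$$\|\Lambda_\Sigma(f)\| \leq C \, \|f\|_{\alpha,\kappa}, \qquad f \in C_c(\Sigma),$$
where $C$ is the decay constant of $G$ w.r.t.\ $\kappa$. This gives condition (\ref{C}) in the definition of the $\L$-decay property for $\L = A^\Sigma_\kappa$ and $\|\cdot\|' = \|\cdot\|_{\alpha,\kappa}$.

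Next, I would verify that every $\xi \in A^\Sigma_\kappa$ tends to $0$ at infinity w.r.t.\ $\|\cdot\|_{\alpha,\kappa}$. Since $\alpha$ is trivial, the norm simplifies to
$$\|\xi_F\|_{\alpha,\kappa}^2 = \Big\| \sum_{g \in F} \xi(g)^*\xi(g)\, \kappa(g)^2 \Big\|$$
for any finite $F \subseteq G$. By the very definition of $A^\Sigma_\kappa$, the series $\sum_{g \in G} \xi(g)^*\xi(g)\, \kappa(g)^2$ is norm-convergent in $A$, so the Cauchy criterion applies: for each $\varepsilon > 0$ there exists a finite $F_0 \subseteq G$ such that $\big\| \sum_{g \in F} \xi(g)^*\xi(g)\, \kappa(g)^2 \big\| < \varepsilon^2$ for every finite $F$ disjoint from $F_0$. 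This is exactly the required statement $\|\xi_F\|_{\alpha,\kappa} < \varepsilon$.

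Both conditions being established, $\Sigma$ has the $A^\Sigma_\kappa$-decay property. No step here is really an obstacle: the only nontrivial ingredient is Proposition \ref{commu}, and the triviality of $\alpha$ removes its restrictive hypothesis on $f$. The decay-at-infinity condition is immediate from the Cauchy criterion for unconditional convergence in the Banach space $A$, as discussed in the preliminaries.
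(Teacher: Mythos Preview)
Your proof is correct and follows the same approach as the paper. The paper's own proof is the single sentence ``Since $A^\alpha=A$ when $\alpha$ is trivial, the result is an immediate consequence of Proposition~\ref{commu},'' which is exactly your first step; you simply spell out the decay-at-infinity condition via the Cauchy criterion, a verification the paper leaves implicit.
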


\begin{proof}
Since  $A^\alpha=A$ when $\alpha$ is trivial, the result is an immediate consequence of Proposition \ref{commu}. 
\end{proof}
The following result generalizes \cite[Theorem 3.13]{BeCo2}. 
\begin{corollary} \label{L-proper} Assume that $A$ is commutative,  $G$ is countable and $\alpha$ is trivial. Let $L: G \to [0,\infty)$ be a proper function.

\smallskip \noindent  Assume that $G$ has polynomial $H$-growth $($w.r.t. $L$$)$. 
Then there exists some $s_0 > 0$ such that $\Sigma$ is $A^\Sigma_\kappa$-decaying,
where $\kappa = (1+L)^{s_0}$.

\smallskip \noindent  More generally, assume that $G$ has subexponential $H$-growth $($w.r.t. $L$$)$. 
Let $r \in (0,1)$ and set  $\kappa_r = r^{-L}$.  
Then $\Sigma$ is $A^\Sigma_{\kappa_r}$-decaying. 
\end{corollary}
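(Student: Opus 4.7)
The plan is to reduce this statement to the combination of two results already established in the paper and its prequel. First I note that Corollary \ref{comdecay} applies in exactly the setting we are in (commutative $A$, trivial $\alpha$, countable $G$): it tells us that to obtain the $A^\Sigma_\kappa$-decay property for $\Sigma$, it is enough to verify that $G$ itself is $\kappa$-decaying in the scalar sense of \cite{BeCo2}. Thus the problem reduces purely to a statement about the group $G$ equipped with the weight $\kappa$, and this is precisely what \cite[Theorem 3.13]{BeCo2} addresses.

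More concretely, in the polynomial $H$-growth case I would invoke \cite[Theorem 3.13, part 1)]{BeCo2} (this is exactly what was used in the proof of Corollary \ref{M1}) to produce some exponent $s_0 > 0$ such that $G$ is $(1+L)^{s_0}$-decaying. Corollary \ref{comdecay}, applied with $\kappa = (1+L)^{s_0}$, then yields at once that $\Sigma$ has the $A^\Sigma_\kappa$-decay property. For the subexponential $H$-growth case, the argument is identical except that one now invokes \cite[Theorem 3.13, part 2)]{BeCo2} to conclude that $G$ is $r^{-L}$-decaying for every $r \in (0,1)$, and then applies Corollary \ref{comdecay} with $\kappa_r = r^{-L}$.

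There is no serious obstacle in this final step: the substantive content has been absorbed into Proposition \ref{commu} (and its consequence Corollary \ref{comdecay}), where the trick of passing from $A^G$-norms to pointwise $\omega$-norms at pure states allowed one to replace a convolution estimate with $A$-valued coefficients by an ordinary scalar convolution estimate via Lemma \ref{commineq}. Given that, the present corollary is just the packaging of that principle together with the known H-growth estimates from \cite{BeCo2}, in complete formal analogy with the passage from Proposition \ref{k-decay} to Corollary \ref{M1}.
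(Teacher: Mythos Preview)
Your proposal is correct and follows essentially the same approach as the paper: reduce via Corollary \ref{comdecay} to the scalar $\kappa$-decay of $G$, and then invoke \cite[Theorem 3.13, parts 1) and 2)]{BeCo2} for the polynomial and subexponential $H$-growth cases respectively. The extra commentary on Lemma \ref{commineq} and Proposition \ref{commu} is accurate context but not needed for the argument itself.
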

\begin{proof}
Assume that $G$ has polynomial $H$-growth (w.r.t. $L$). Then, according to \cite[Theorem 3.13, part 1)]{BeCo2}, there exists some $s_0 > 0$ such that $G$ is $\kappa$-decaying, where $\kappa = (1+L)^{s_0}$. So the first statement follows from Corollary \ref{comdecay}. The second statement is proven in the same way, using now \cite[Theorem 3.13, part 2)]{BeCo2}. 
\end{proof}
Assume $A$ is commutative, $G$ is countable and $\alpha$ is trivial. 
In the setting of Corollary \ref{L-proper}, the first assertion implies that if $G$ has polynomial $H$-growth (w.r.t. $L$) and we set  $\kappa_s=(1+L)^s$ for $s > 0$, then the Fr{\'e}chet  space $\cap_{s>0} A^\Sigma_{\kappa_s}$ (w.r.t. the obvious family of seminorms)  embeds (densely) in $C_r^*(\Sigma)$. We tend to believe that this also should hold when $\alpha$ is non-trivial.   

When $A$ is commutative, $\sigma$ is scalar-valued and  $\alpha$ is not assumed to be trivial, such a Fr{\' e}chet space has been considered by Ji and Schweitzer \cite{JiSc} in the more general setting of actions by locally compact groups. By following a rather different method, involving a certain generalized Roe algebra, they show that if $G$ has the so-called strong rapid decay (SRD) property (w.r.t. to some proper length function on $G$), then the associated Fr{\'e}chet space
embeds  as a {\it spectral invariant} dense $*$-subalgebra of $C_r^*(\Sigma)$. Moreover, it is shown in \cite{CW} that the converse statement is also true for discrete groups, and that, a discrete group $G$ has property (SRD) if and only if $G$ has polynomial growth in the usual sense (w.r.t. some proper length function).

\medskip We also include a result in the vein of  \cite[Theorem 5.9 and Corollary 5.15]{BeCo2}:

\begin{corollary} \label{BFP3}

Assume $A$ is commutative,  $G$ is countable with the Haagerup property and $\alpha$ is trivial. Let $L: G \to [0,\infty)$ be a Haagerup function for $G$ (so $L$ is negative definite and proper) and assume that $G$ has subexponential $H$-growth (w.r.t. $L$).

\smallskip \noindent  For each $r \in (0,1)$, set $\psi_{r} = r^{L}$ and $L^r = L^{\psi_r}$.

   \medskip \noindent Then $\{L^{r}\}_{r \to 1^{-}}$ is a bounded Fourier summing net for $\Sigma=(A, G, \rm{id}, \sigma)$. Moreover, $\Sigma$ has the bounded Fej{\'e}r property.

\end{corollary}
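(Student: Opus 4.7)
The plan is to deduce both conclusions from the machinery already developed: Corollary \ref{posdef} (which handles the multiplier norm), Corollary \ref{L-proper} (which supplies the relevant decay property), and then Propositions \ref{BFSN} and \ref{BFP2} to assemble everything.

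First I would observe that since $\alpha$ is trivial, $L^r = L^{\psi_r}$ coincides with $T^{\psi_r}$, so it suffices to work with the scalar multiplier $T^{\psi_r}$. Because $L$ is negative definite and $r \in (0,1)$, Schoenberg's theorem gives that $\psi_r = r^L = e^{-|\log r|\, L}$ is a positive definite function on $G$, normalized since $L(e) = 0$ implies $\psi_r(e) = 1$. Corollary \ref{posdef} then yields $L^r \in M_0A(\Sigma) \subset MA(\Sigma)$ with $\tn L^r \tn = \|M_{L^r}\|_{cb} = \psi_r(e) = 1$, so the net $\{L^r\}$ is bounded. Also, $\psi_r(g) = r^{L(g)} \to 1$ as $r\to 1^-$ for each fixed $g \in G$.

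The next step, which is the key technical point, is to produce for each $r\in(0,1)$ a weight $\kappa_r$ matching the hypotheses of Proposition \ref{BFSN}. The trick is to interpolate: pick any $t_r \in (r, 1)$ and set $\kappa_r = t_r^{-L}$. Since $L \geq 0$ and $t_r < 1$, we have $\kappa_r \geq 1$. Moreover
\[
\psi_r(g)\,\kappa_r(g) = (r/t_r)^{L(g)} \leq 1 \quad\text{for all } g \in G,
\]
since $r/t_r \in (0,1)$, so $\|\psi_r \kappa_r\|_\infty \leq 1$, in particular $\psi_r \in \ell^\infty_{\kappa_r}(G,A)$. On the other hand, by the subexponential $H$-growth hypothesis on $G$ (w.r.t.\ $L$), Corollary \ref{L-proper} gives that $\Sigma$ has the $A^\Sigma_{\kappa_r}$-decay property. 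Applying Proposition \ref{BFSN} (using the boundedness of $\{L^r\}$ already verified instead of requiring $\sup C_r K_r < \infty$) we conclude that $\{L^r\}_{r\to 1^-}$ is a bounded Fourier summing net for $\Sigma$.

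Finally, to obtain the bounded Fej\'er property, I would apply Proposition \ref{BFP2} with exactly the same $\psi_r$ and $\kappa_r$. Indeed, $\tn L^r\tn = 1$ and $\Sigma$ is $A^\Sigma_{\kappa_r}$-decaying as above, and the remaining requirement $\psi_r \kappa_r \in c_0(G,A)$ follows because $\psi_r\kappa_r = (r/t_r)^L$ with $r/t_r \in (0,1)$ and $L$ proper: $L(g)\to\infty$ as $g\to\infty$ forces $(r/t_r)^{L(g)} \to 0$. Since there is no real obstacle beyond finding the appropriate interpolating weight $\kappa_r$, both conclusions follow at once.
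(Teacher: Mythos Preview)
Your proof is correct and follows essentially the same route as the paper: invoke Corollary~\ref{posdef} for boundedness, Corollary~\ref{L-proper} for the $A^\Sigma_{\kappa_r}$-decay, and then Propositions~\ref{BFSN} and~\ref{BFP2}. The only cosmetic difference is the choice of weight: the paper takes $\kappa_r = r^{-L}$ for the first assertion (so that $\psi_r\kappa_r \equiv 1$) and $\kappa'_r = r^{-L/2}$ for the second (so that $\psi_r\kappa'_r = r^{L/2} \in c_0$), whereas you interpolate once with $\kappa_r = t_r^{-L}$ for some $t_r\in(r,1)$ and use it for both parts---taking $t_r=\sqrt{r}$ recovers exactly the paper's second weight.
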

\begin{proof} Since each $\psi_r$ is a normalized positive definite function on $G$, we know from Corollary \ref{posdef} that $\tn L^r\tn =1$ for all $r$, so $\{ L^r\}_{r\to 1^{-}}$ is bounded. It is also evident that $\psi_r$ converges pointwise to 1 when $r\to 1^{-}$.  

For each $r \in (0,1)$, set $\kappa_r = r^{-L}$. Corollary \ref{L-proper} gives that  $\Sigma$ is $A^\Sigma_{\kappa_r}$-decaying. As $\psi_r \kappa_r = 1$, $\psi_r \in \ell^\infty_{\kappa_r}(G,A)$ 
for all $r$.  Hence, we may apply Proposition \ref{BFSN} to obtain the first assertion. 

For each $r \in (0,1)$, set $\kappa'_r = r^{-L/2} = \kappa_{\sqrt{r}}$. Then,  for each $r$, $\Sigma$ is $A^\Sigma_{\kappa'_r}$-decaying and $\psi_r \kappa'_r \in c_0(G,A)$. We may therefore apply Proposition \ref{BFP2} and obtain the second assertion.

\end{proof}
 Finally, as an application of Corollary \ref{BFP3}, we give a simple example illustrating how our work may be used to determine the ideal structure of certain group C$^*$-algebras.
 
\smallskip
 \begin{example} \label{PSL}Consider $K=SL(2,\Relativi)$ and let $\lambda$ denote its left regular representation on $\ell^2(K)$. Denote the identity element in $K$ by $I_2$ and set $S=\lambda(-I_2)$. As $S=S^*=S^{-1}$ is central in $B=C_r^*(K)$, we have $$B = Bp \oplus Bq$$
 where $p, q$ are the central projections in $B$ given by $p= \frac{I+S}{2}, \, q = \frac{I-S}{2}$.
 
 \medskip Especially, $B$ has at least two non-trivial ideals, namely $Bp$ and $Bq$. In fact, these are the only non-trivial ideals of $B$. 
To see this, we may argue as follows. 

\medskip Let $Z=\{\pm I_2\}$ denote the center of $K$ and set $G = K/Z$, so $G$ is the modular group $PSL(2, \Relativi) \simeq \Relativi_2 \ast \Relativi_3$.  Using  \cite[Theorem 2.1]{Bed}, we may write  $B \simeq C_r^*(A, G, {\rm id}, \sigma)$ where $A= C_r^*(Z) \simeq \Complessi^2$ and $\sigma:G\times G \to \U(A)\simeq \Toro^2$ is a suitably chosen coycle.
Now,  as is well known,  $G\simeq \Relativi_2 \ast \Relativi_3$ is a Powers group \cite{dH}  
and $G$ has the Haagerup property,  the ``block'' length function $L$ on $G$ being a Haagerup function (see for instance \cite{Bo}). Moreover, $G$ has polynomial H-growth, and therefore subexponential H-growth (w.r.t. $L$), see \cite[Example 3.12, part 4)]{BeCo2}. Hence,  Corollary \ref{BFP3} applies, showing the existence of a Fourier summing net for $(A, G, {\rm id}, \sigma)$, which 
obviously preserves ideals of $A$. 
We can therefore conclude from Example \ref{wPo} that there is a bijection between ideals of $A\simeq \Complessi^2$ and ideals of $B$, hence that $B$ has exactly two non-trivial ideals, as desired. 

Note that $G$ is known to be exact (cf.\ \cite[Theorem 5.2.7]{BrOz}), so one can avoid showing the existence of a Fourier summing net as we did above.
However, we hope that the technique of proof might help to handle more complicated cases in the future.
 
 \hfill $\square$
 \end{example} 

 \medskip
\noindent{\bf Acknowledgements.} 
 Most of the present work has been done during several visits
 made by R.C.  at the  Institute of Mathematics, 
University of Oslo in the period 2009--2013.
 He thanks the operator algebra group for their kind hospitality and the Norwegian Research Council for partial financial support.

\medskip

\bigskip

{\parindent=0pt Addresses of the authors:\\

Erik B\'edos, Institute of Mathematics, University of
Oslo, \\
P.B. 1053 Blindern, N-0316 Oslo, Norway.\\ E-mail: bedos@math.uio.no \\

\noindent
Roberto Conti, 
Universit\`{a} Sapienza di Roma, \\
 Dipartimento di Scienze di Base e Applicate per l'Ingegneria, Sez. di Matematica,\\
 via A. Scarpa 16, I-00166 Roma, Italy.
\\ E-mail: roberto.conti@sbai.uniroma1.it\par}

\end{document}